\setlist[itemize]{noitemsep,nolistsep} 
\setlist[enumerate]{noitemsep,nolistsep} 
 \patchcmd\Gread@eps{\@inputcheck#1 }{\@inputcheck"#1"\relax}{}{}
\newtheorem{theorem}{Theorem}
\newtheorem{lemma}{Lemma}
\newtheorem{corollary}{Corollary}
\newtheorem{observation}{Observation}
\newtheorem{example}{Example}
\DeclareMathOperator{\poly}{poly}
\DeclareMathOperator{\size}{size}
\newcommand{\BPOL}{BPO$_\text{L}$}
\NewDocumentCommand{\from}{}{\ensuremath{\colon}}
\NewDocumentCommand{\st}{}{\ensuremath{s.t. }}
\RenewDocumentCommand{\to}{}{\ensuremath{\rightarrow}}
\NewDocumentCommand{\NP}{}{\ensuremath{\mathsf{NP}}}
\RenewDocumentCommand{\P}{}{\ensuremath{\mathsf{P}}}
\NewDocumentCommand{\argmax}{}{\ensuremath{\text{argmax}}}
\DeclareMathOperator{\cert}{cert}
\NewDocumentCommand{\y}{}{\ensuremath{\mathbf{y}}}
\NewDocumentCommand{\x}{}{\ensuremath{\mathbf{x}}}
\NewDocumentCommand{\Z}{}{\ensuremath{\mathbb{Z}}}
\NewDocumentCommand{\Q}{}{\ensuremath{\mathbb{Q}}}
\NewDocumentCommand{\R}{}{\ensuremath{\mathbb{R}}}
\NewDocumentCommand{\BPO}{O{H} O{p}}{\ensuremath{\mathsf{BPO}(#1,#2)}}
\NewDocumentCommand{\BOP}{}{Boolean Maximization Problem}
\NewDocumentCommand{\topk}{}{\ensuremath{\mathsf{top}^k}}
\NewDocumentCommand{\prim}{O{F}}{\ensuremath{\mathsf{G_{prim}}(#1)}}
\NewDocumentCommand{\inc}{O{F}}{\ensuremath{\mathsf{G_{inc}}(#1)}}
\NewDocumentCommand{\neigh}{O{v} O{H}}{\ensuremath{N_{#2}(#1)}}
\NewDocumentCommand{\oneigh}{O{v} O{H}}{\ensuremath{N^*_{#2}(#1)}}
\NewDocumentCommand{\calH}{}{\ensuremath{\mathcal{H}}}
\NewDocumentCommand{\calM}{}{\ensuremath{\mathcal{M}}}
\NewDocumentCommand{\calT}{}{\ensuremath{\mathcal{T}}}
\NewDocumentCommand{\hwidth}{m m m}{%
  \ifthenelse{\isempty{#2}}%
  {\ensuremath{\mathsf{#3}(#1)}}%
  {\ensuremath{\mathsf{#3}(#1, #2)}}%
}
\NewDocumentCommand{\ptw}{m O{}}{\hwidth{#1}{#2}{ptw}}
\NewDocumentCommand{\itw}{m O{}}{\hwidth{#1}{#2}{itw}}
\NewDocumentCommand{\tw}{m O{}}{\hwidth{#1}{#2}{tw}}
\NewDocumentCommand{\mimw}{m O{}}{\hwidth{#1}{#2}{mimw}}
\NewDocumentCommand{\icw}{m O{}}{\hwidth{#1}{#2}{icw}}
\NewDocumentCommand{\out}{O{C}}{\ensuremath{\mathsf{out}}(#1)}
\NewDocumentCommand{\inputs}{O{v}}{\ensuremath{\mathsf{inputs}}(#1)}
\NewDocumentCommand{\var}{m}{\ensuremath{\mathsf{var}(#1)}}
\NewDocumentCommand{\ine}{O{g}}{\ensuremath{\mathsf{in_E}}(#1)}
\NewDocumentCommand{\oute}{O{g}}{\ensuremath{\mathsf{out_E}}(#1)}
\NewDocumentCommand{\ing}{O{g}}{\ensuremath{\mathsf{in}}(#1)}
\NewDocumentCommand{\outg}{O{g}}{\ensuremath{\mathsf{out}}(#1)}
\NewDocumentCommand{\edges}{O{C} O{}}{\ensuremath{\mathsf{edges}_{#2}(#1)}}
\RenewDocumentCommand{\R}{}{\ensuremath{\mathbb{R}}}
\NewDocumentCommand{\zvar}{O{g}}{\ensuremath{\mathsf{z}}(#1)}
\newcommand{\transp}{\mathsf T}
\renewcommand{\S}{\mathcal S}
\renewcommand{\P}{\mathcal P}
\newcommand{\MP}{\mathcal{MP}}
\newcommand{\MS}{\mathcal{MS}}
\DeclareMathOperator{\conv}{conv}
\tikzstyle{startstop} = [rectangle, rounded corners, 
\begin{document}
\title{A Knowledge Compilation Take \\ on Binary Polynomial Optimization}
%
%

\author{Florent Capelli 
\thanks{Univ. Artois, CNRS, UMR 8188, Centre de Recherche en Informatique de Lens (CRIL), F-62300 Lens, France}
\and
Alberto Del Pia 
\thanks{Department of Industrial and Systems Engineering 
              \& Wisconsin Institute for Discovery, 
              University of Wisconsin-Madison, Madison, WI, USA}
\and
Silvia Di Gregorio
\thanks{Universit\'e Sorbonne Paris Nord, Laboratoire d'Informatique de Paris Nord (LIPN), CNRS UMR 7030, F-93430, Villetaneuse, France}}

\date{October 2024}

\maketitle              
\begin{abstract}
      The Binary Polynomial Optimization (BPO) problem is defined as the problem of maximizing a given polynomial function over all binary points. 
    The main contribution of this paper is to draw a novel connection between BPO and the field of Knowledge Compilation.
    This connection allows us to unify and significantly extend the state-of-the-art for BPO, both in terms of tractable classes, and in terms of existence of extended formulations.
    In particular, for instances of BPO with hypergraphs that are either $\beta$-acyclic or with bounded incidence treewidth, we obtain strongly polynomial algorithms for BPO, and extended formulations of polynomial size for the corresponding multilinear polytopes.
    The generality of our technique allows us to obtain the same type of results for extensions of BPO, where we enforce extended cardinality constraints on the set of binary points, and where variables are replaced by literals.
    We also obtain strongly polynomial algorithms for the variant of the above problems where we seek $k$ best feasible solutions, instead of only one optimal solution.
    Computational results show that the resulting algorithms can be significantly faster than current state-of-the-art.
\end{abstract}

\emph{Key words: Binary Polynomial Optimization; Knowledge Compilation; Boolean functions; strongly polynomial algorithms; Extended formulations} 


%
%
%
\section{Introduction}

In \emph{Binary Polynomial Optimization (BPO),} we are given a hypergraph $H = (V,E)$, a \emph{profit} function $p \from E \to \Q$, 
and our goal is to find some $x$ that attains
\begin{align}
\label{pr BPO}
\tag{BPO}
\begin{split}
    \max_x \quad & 
    \sum_{e \in E} p(e) \prod_{v \in e} x(v) \\
    \text{s.t.} \quad & x \in \{0,1\}^V.
    \end{split}
\end{align}
BPO is strongly NP-hard, as even the case where all the monomials have degree equal to 2 has this property~\cite{GarJohSto}. 
Nevertheless, its generality and wide range of applications attract a significant amount of interest in the optimization community. 
A few examples of applications can be found in classic operations research, computer vision, communication engineering, and theoretical physics \cite{Ber87,BorHam02,Sch09,LieMarPagRicSch10,Ish11,POLIP}.

To the best of our knowledge, three main tractable classes of BPO have been identified so far, which depend on the structure of the hypergraph $H$.
These are instances such that:
\begin{itemize}
\item[(i)] 
The primal treewidth of $H$, denoted by $\ptw{H}$, is bounded by $\log(\poly(|V|,|E|))$;
\item[(ii)] $H$ is $\beta$-acyclic; 
\item[(iii)] $H$ is a cycle hypergraph.
\end{itemize}
The techniques used to obtain these tractability results are based either on combinatorial techniques, or on a polyhedral approach.
Combinatorial approaches directly exploit the structure of the problem, and generally result in strongly polynomial algorithms.
Algorithms of this type have been obtained in \cite{CraHanJau90} for class (i), and in 
\cite{dPDiG22SODA,dPDiG23ALG} for class (ii).
On the other hand, polyhedral approaches result in linear programming reformulations, which can be used to solve the problem in polynomial time \cite{Kha79} or strongly polynomial time \cite{Tar86}, and to obtain strong relaxations for more general problems with tractable substructures.
Polyhedral approaches are based on 
the introduction of variables $x(e)$, for every $e \in E$, which leads to the reformulation of BPO as 
\begin{align*}
    \max_x \quad & 
    \sum_{e \in E} p(e) x(e) \\
    \text{s.t.} \quad & x(e) = \prod_{v \in e} x(v) \\
    & x \in \{0,1\}^{V \cup E}.
\end{align*}
The feasible region of the above optimization problem is
\begin{equation*} 
\MS_H = \Big\{ x \in \{0,1\}^{V \cup E} : x(e) = \prod_{v \in e} x(v), \; \forall e \in E \Big\},
\end{equation*}
and is called the 
\emph{multilinear set} of $H$ \cite{dPKha17MOR}.
The convex hull of $\MS_H$ is denoted by $\MP_H$, and is called the~\emph{multilinear polytope} of $H$.
Since the objective function is linear, if a linear inequalities description of $\MP_H$
is readily available,
we can solve the original BPO problem via linear programming.
In fact, to solve the original BPO problem via linear programming, it suffices to have access to an \emph{extended formulation} of $\MP_H$, which is a linear inequalities description of a higher-dimensional polytope whose projection in the original space is $\MP_H$.
Extended formulations of polynomial size for $\MP_H$ can be found in \cite{WaiJor04,Lau09,BieMun18} for class (i), in \cite{dPKha23MPA}, for class (ii), and in \cite{dPDiG21IJO} for class (iii).
The result in \cite{dPKha23MPA} extends previous extended formulations for Berge-acylic instances \cite{dPKha18SIOPT,BucCraRod18}, for $\gamma$-acyclic instances \cite{dPKha18SIOPT}, and for kite-free $\beta$-acyclic instances \cite{dPKha21MOR}.

In this paper we provide a novel connection between BPO and the field of \emph{knowledge compilation,} which allows us to unify and significantly extend most known tractability results for BPO, including all the ones mentioned above.
We establish this connection on two different levels: (1) in terms of polynomial time algorithms, and (2) in terms of extended formulations of polynomial size.

The first contribution is to draw a connection between \emph{BPO} 
and the problem of performing \emph{algebraic model counting over the $(\max, +)$ semiring} on some Boolean function~\cite{kimmig2017algebraic}.
We associate a Boolean function and a weight function to each BPO such that optimal solutions of the original BPO are in one to one correspondence with the optimal solutions of the Boolean function. 
This Boolean function can be encoded as a Conjunctive Normal Form (CNF) formula whose underlying structure is very close to the structure of the original BPO problem, allowing us to leverage many known tractability results on CNF formulas to BPO such as bounded treewidth CNF formulas~\cite{samer2010algorithms} or $\beta$-acyclic formulas~\cite{Capelli17} (see~\cite[Chapter 3]{CapelliPhD} for a survey). 
More interestingly, the tractability of algebraic model counting for CNF formulas is sometimes proven in two steps: first, one computes a small data structure which represents every satisfying assignment of the CNF formula in a factorized yet tractable form, as in~\cite{BovaCMS15}, and for which we can find the optimal value in optimal time~\cite{kimmig2017algebraic,bourhis20topk}. 
This data structure can be seen as a syntactically restricted Boolean circuit, known as \emph{Decomposable Negation Normal Form circuits,} DNNF for short, which have extensively been studied in the field of Knowledge Compilation~\cite{DarwicheM2002}. 
Our connection actually shows that all feasible points of a tractable BPO instance and their value can be represented by a small DNNF, which can be efficiently constructed from the BPO instance itself.
This enables us to transfer other known tractability results to the setting of BPO. 
For example, by working directly on the Boolean circuit, we can not only prove that broad classes of BPO can be solved in strongly polynomial time, but also that we can solve BPO instances in these classes together with cardinality constraints or find the $k$ best solutions.

The second contribution is to draw a connection between \emph{extended formulations for BPO} and DNNF.
We show that if a multilinear set $\MS_H$ can be encoded via a DNNF circuit $C$, then we can extract from $C$ an extended formulation of the multilinear polytope $\MP_H$ of roughly the same size of $C$. Our extended formulation is very simple, and in particular all coefficients are $0, \pm 1$.
It also enjoys a nice theoretical property, since the system is \emph{totally dual integral} (TDI).
%
%
This connection allows us to obtain extended formulations of polynomial size for all the classes of BPO that we show can be solved in strongly polynomial time with the approach detailed above.
Besides the wide applicability of our result, our method inherits the flexibility of DNNF circuits.
As an example, for each multilinear set for which we obtain a polynomial-size extended formulation of the corresponding multilinear polytope, we directly obtain a polynomial-size extended formulation corresponding to the subset of the multilinear set obtained by enforcing cardinality constraints.
%
As a result, we provide a unifying approach for a large number of polynomial-size extended formulations, and at the same time significantly extend them.


%
%
In the remainder of the introduction we present our results in further detail.
We remark that these results constitute just the tip of the iceberg of what our novel connection can achieve.
We begin by stating our first result for BPO, and we refer the reader to \cref{sec: connection} for the definitions of $\beta$-acyclicity and of incidence treewidth of a hypergraph $H$, which we denote by $\itw{H}$.

\begin{theorem}[Tractability of BPO]
\label{th BPO short}
There is a strongly polynomial time algorithm to solve the BPO problem, provided that $H$ is $\beta$-acyclic, or the incidence treewidth of $H$ is bounded by $\log(\poly(|V|,|E|))$.
Furthermore, under the same assumptions, there exists a polynomial-size extended formulation for $\MP_H$.
\end{theorem}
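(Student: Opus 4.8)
The plan is to prove both halves of the statement through a single combinatorial object: a Decomposable Negation Normal Form (DNNF) circuit that represents the multilinear set $\MS_H$ together with the profit weights. Concretely, I would first realize the membership constraint of $\MS_H$ as a Boolean function over the variable set $V \cup E$, where the truth value of the variable associated with an edge $e$ records whether the monomial $\prod_{v\in e} x(v)$ evaluates to $1$. This function is naturally a CNF: for each $e \in E$ we encode the equivalence $x(e) \Leftrightarrow \bigwedge_{v \in e} x(v)$ by the clauses $(\neg x(e) \vee x(v))$ for every $v \in e$, together with the single clause $(x(e) \vee \bigvee_{v \in e} \neg x(v))$. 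Attaching the weight $p(e)$ to the literal ``$x(e)$ true'' and the neutral weight $0$ to every other literal, maximizing $\sum_{e} p(e)\,x(e)$ over the satisfying assignments of this CNF is exactly the BPO problem cast as algebraic model counting over the $(\max,+)$ semiring.

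The second step is to compile this CNF into a DNNF whose size is polynomial in $|V|$ and $|E|$. Here I would invoke the known compilation results for structurally restricted CNF formulas: $\beta$-acyclic CNFs admit polynomial-size DNNF, and CNFs of incidence treewidth $w$ admit DNNF of size roughly $2^{O(w)}\poly(|V|,|E|)$, which is polynomial once $w = \log(\poly(|V|,|E|))$. The crux of the argument, and the step I expect to be the main obstacle, is showing that the CNF produced above inherits the relevant structural parameter from $H$: the clause--variable incidence structure of the encoding must be $\beta$-acyclic whenever $H$ is, and must have incidence treewidth bounded in terms of $\itw{H}$. The auxiliary variable $x(e)$ appears only in the clauses generated by $e$, so it behaves like a private vertex pendant to the hyperedge $e$, and the binary implication clauses have scopes contained in $\{e\}\cup e$; I would argue that such local gadgets preserve $\beta$-acyclicity and inflate the incidence treewidth by at most a constant, but making this precise (especially the $\beta$-acyclic case, where one must exhibit a valid elimination order on the enlarged hypergraph) is the delicate part.

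Given a polynomial-size DNNF $C$ encoding $\MS_H$, the algorithmic half follows by a single bottom-up evaluation of $C$ over the $(\max,+)$ semiring, which computes the optimal value, while a standard top-down trace recovers an optimizer. Since every operation performed is either a comparison ($\max$) or an addition of profit values, and their number is linear in the size of $C$, the algorithm is \emph{strongly} polynomial: its arithmetic complexity does not depend on the bit-length of the profits $p$. This is exactly the feature that distinguishes the DNNF-based approach from the generic linear-programming route, and it yields the claimed strong polynomiality uniformly for both classes.

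For the extended-formulation half, I would feed the same circuit $C$ into the DNNF-to-extended-formulation construction developed in the paper: from a DNNF encoding $\MS_H$ one reads off a linear description, in a lifted space of dimension $O(|C|)$, whose projection onto the $x$-variables is $\MP_H$, with all coefficients in $\{0,\pm 1\}$. Because $|C|$ is polynomial in both regimes above, the resulting extended formulation has polynomial size, completing the proof. The only nontrivial point to verify here is that the DNNF constructed in the first step indeed \emph{encodes} $\MS_H$ in the precise sense required by that construction, namely that its satisfying assignments, projected to $V \cup E$, are exactly $\MS_H$; this is immediate from the correctness of the CNF encoding.
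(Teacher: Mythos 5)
Your overall architecture is exactly the paper's: recast BPO as maximizing a weight function over the satisfying assignments of a CNF encoding of the multilinear set, compile that CNF into a (d-)DNNF of polynomial size using the known results for $\beta$-acyclic and bounded-incidence-treewidth formulas, evaluate the circuit bottom-up over the $(\max,+)$ semiring for the algorithmic half, and extract the extended formulation from the same circuit for the polyhedral half. For the bounded incidence treewidth case your argument goes through essentially as in the paper (which proves $\itw{F_H} \leq 2(1+\itw{H})$ by a module-contraction/edge-subdivision argument).

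However, there is a genuine gap in the $\beta$-acyclic case, precisely at the step you flagged as delicate. The claim that the natural encoding --- the clauses $\neg x(e) \vee x(v)$ for $v \in e$ together with $x(e) \vee \bigvee_{v\in e}\neg x(v)$ --- preserves $\beta$-acyclicity is false, and no elimination order on the enlarged hypergraph will rescue it. The binary clauses contribute the two-element hyperedges $\{y_e, x_v\}$ to the hypergraph of the CNF, and whenever two distinct edges $e,f$ of $H$ share two vertices $u,v$ (which happens already for nested edges $e \subset f$ with $|e|\ge 2$, a configuration entirely compatible with $\beta$-acyclicity), the vertices $x_u, y_e, x_v, y_f$ together with the four edges $\{y_e,x_u\},\{y_e,x_v\},\{y_f,x_u\},\{y_f,x_v\}$ form a $\beta$-cycle. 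Since $\beta$-acyclicity is hereditary, the encoded formula is then not $\beta$-acyclic no matter how one orders the variables; the paper exhibits exactly this failure on its running example. The missing idea is to \emph{change the encoding}: fix a $\beta$-elimination order $\prec$ of $H$ and pad each binary clause into $L^\prec_{v,e} = \neg y_e \vee x_v \vee \bigvee_{w \in e,\, v \prec w} \neg x_w$. This keeps the same set of satisfying assignments (the padded clauses still jointly encode $y_e \Rightarrow \bigwedge_{v\in e} x_v$), but now the variable sets of the clauses attached to $e$ form a chain under inclusion, so each $y_e$ is a nest point; after eliminating all the $y_e$'s, the surviving edges are suffixes (with respect to $\prec$) of the original edges of $H$, and the original elimination order carries over. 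Without this modified encoding, the compilation theorem for $\beta$-acyclic CNFs simply does not apply, and the $\beta$-acyclic half of the theorem is not proved. (A minor additional remark: the extended formulation must be read off in the space of all DNNF variables, i.e., both the vertex and the edge variables, so that its projection is $\MP_H \subseteq \R^{V\cup E}$ rather than a projection onto the vertex coordinates only.)
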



\cref{th BPO short} implies and unifies the known tractable classes (i), (ii), (iii).
To see that (i) is implied, it suffices to note that $\ptw{H} \ge 2\itw{H}$.
We observe that this bound can be very loose, as in fact 
the hypergraph with only one edge containing all $n$ vertices has $\ptw{H} = n-1$ and $\itw{H}=1$. 
Hence, \cref{th BPO short} greatly extends (i).





The generality of our approach allows us to significantly extend \cref{th BPO short} in several directions.
The first extension enables us to consider a constrained version of BPO, with a more general feasible region.
Namely, we consider feasible regions consisting of binary points that satisfy \emph{extended cardinality constraints} of the form
\begin{align}
\label{eq ext card const}
\sum_{v \in V} x(v) \in S,
\end{align}
for some $S \subseteq [n]$ where $n=|V|,$
and where the set $S$ is given as part of the input.
%
Note that extended cardinality constraints are quite general and extend several classes of well-known inequalities, like cardinality constraints and modulo constraints.
To the best of our knowledge, there are only two classes of constrained BPO that are known to be tractable. 
In these two classes, the feasible region consists of
(a) binary points satisfying cardinality constraints $l \leq \sum_{v \in V} x_v \leq u$, and $E$ is nested \cite{CheDasGun23b} or $|E| = 2$ \cite{CheDasGun23};
or (b) binary points satisfying polynomial constraints
$$
\sum_{f \subseteq e_i} 
p^i(f) \prod_{v \in f} x(v) \ge 0, \qquad i=1,\dots,m,
$$
where, for $i=1,\dots,m$, $e_i \subseteq V$ and $p^i \from 2^{e_i} \to \Q$,
and it is assumed that the primal treewidth of $(V,E \cup \{e_1,\dots,e_m\})$ is bounded by $\log(\poly(|V|,|E|,m))$ \cite{WaiJor04,Lau09,BieMun18}.

Our second extension concerns the objective function of the problem.
Namely, we consider the objective function obtained from the one of BPO by replacing variables with literals:
\begin{align*}
    \sum_{e \in E} p(e) \prod_{v \in e} \sigma_e(v),
\end{align*}
where, for each $e \in E$, $\sigma_e$ is a mapping (given in input) with $\sigma_e(v) \in \{x(v), \ 1-x(v)\}$.
We refer to this class of problems as \emph{Binary Polynomial Optimization with Literals,} abbreviated \BPOL{}.
This optimization problem is at the heart of the area of research known as \emph{pseudo-Boolean optimization} in the literature, and we refer the reader to \cite{BorHam02} for a thorough survey.
In the unconstrained case, i.e., the feasible region consists of $\{0,1\}^V$, it is known that \BPOL{} can be solved in polynomial time if $H$ is $\beta$-acyclic \cite{Kam23}, and an extended formulation of polynomial size for the convex hull of the associated \emph{pseudo-Boolean set} is given in \cite{dPKha24MPA}.
This result,
as well as the tractability of (a) above, 
is implied by our most general statement, the extension of \cref{th BPO short}, given below.

%

\begin{theorem}[Tractability of \BPOL{}]
\label{th general}
There is a strongly polynomial time algorithm to solve \BPOL{} with extended cardinality constraints, provided that $H$ is $\beta$-acyclic, or
the incidence treewidth of $H$ is bounded by $\log(\poly(|V|,|E|))$.
Furthermore, under the same assumptions, there exists a polynomial-size extended formulation for the convex hull of the points in the pseudo-Boolean set that satisfy the extended cardinality constraints.
\end{theorem}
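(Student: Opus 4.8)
The plan is to follow the two-step knowledge-compilation methodology underlying \cref{th BPO short}: encode the \emph{unconstrained} problem as a weighted Boolean function whose structure mirrors $H$, compile it into a small DNNF circuit, and only then enforce the cardinality constraint and read off both the optimum and the extended formulation. For the Boolean encoding I would introduce, exactly as in the variable case, one auxiliary variable $x(e)$ for each $e \in E$ together with a CNF gadget asserting $x(e) = \prod_{v \in e} \sigma_e(v)$. Because a CNF clause may freely mention positive and negative literals, the literal maps $\sigma_e$ are absorbed at no cost: wherever $\sigma_e(v) = 1 - x(v)$ one simply flips the polarity of the literal on $x(v)$ in the gadget for $e$. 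The variables touched by the gadget for $e$ are $e \cup \{x(e)\}$, so the incidence hypergraph of the resulting formula is, up to a bounded local modification per edge, that of $H$; hence it remains $\beta$-acyclic, respectively retains incidence treewidth bounded by $\log(\poly(|V|,|E|))$. Weighting the literal $x(e)$ by $p(e)$ over the $(\max,+)$ semiring (and all remaining literals by $0$) makes the algebraic model count equal the \BPOL{} objective, and the known compilation results for bounded-treewidth and for $\beta$-acyclic CNF formulas then yield a DNNF circuit $C$ of size polynomial in $|V|,|E|$ representing every point of the pseudo-Boolean set together with its value.

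The crux is enforcing the extended cardinality constraint $\sum_{v \in V} x(v) \in S$. This constraint cannot be added directly to the formula, since a single clause over all of $V$ would destroy both $\beta$-acyclicity and the incidence-treewidth bound. Instead I would exploit \emph{decomposability}: at every AND gate of $C$ the variable sets of the subcircuits are disjoint, so the number of true $V$-variables is additive across its inputs, while at an OR gate the set of attainable counts is the union over its inputs. I would therefore annotate each gate with a vector indexed by the possible counts $0, \dots, n$, equivalently take the product of $C$ with a counting automaton, incurring only an $O(n)$-factor blow-up and keeping the circuit polynomial. Discarding the counts outside $S$ produces a DNNF $C_S$ of polynomial size whose satisfying assignments are precisely the feasible points of \BPOL{} obeying the cardinality constraint, still carrying the correct $(\max,+)$ weights.

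Two applications of the machinery then finish the argument. Running $(\max,+)$ algebraic model counting on $C_S$ returns an optimal assignment in time linear in $|C_S|$, hence strongly polynomial in the input, which gives the algorithm; and applying the DNNF-to-extended-formulation construction (the second contribution) to $C_S$ yields an extended formulation of size $O(|C_S|)$ for the convex hull of the cardinality-feasible points of the pseudo-Boolean set, with $0, \pm 1$ coefficients and the TDI property. The main obstacle is the second step: the whole argument hinges on performing the cardinality bookkeeping \emph{after} compilation, on the decomposable circuit, where additivity across AND gates keeps it polynomial, rather than inside the formula where it would ruin the structural tractability that the compilation relies on.
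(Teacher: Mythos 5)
Your overall architecture matches the paper's route step for step: encode the unconstrained \BPOL{} instance as a weighted CNF with one auxiliary variable $y_e$ per monomial (absorbing the literal maps $\sigma_e$ by flipping polarities), compile to a DNNF of polynomial size, enforce the extended cardinality constraint \emph{after} compilation by taking a product with a counting automaton on the circuit (exploiting additivity of counts across decomposable $\wedge$-gates and union across $\vee$-gates), then extract an optimal solution by $(\max,+)$ evaluation and the extended formulation by the flow-based, TDI construction. Your identification of the crux --- that the cardinality constraint must be handled on the circuit rather than in the formula, where a global clause would destroy the structure --- is precisely the paper's \cref{thm:hamming-constraints} and \cref{cor:hamming-constraints}.

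There is, however, one genuine gap: your claim that the Tseitin-style gadget for $y_e = \prod_{v \in e} \sigma_e(v)$ leaves the formula $\beta$-acyclic because its hypergraph is ``up to a bounded local modification per edge'' that of $H$. This fails for the natural encoding. The gadget consists of one long clause on $e \cup \{y_e\}$ together with $|e|$ binary clauses on $\{y_e, x_v\}$, and these short clauses create $\beta$-cycles that are absent from $H$: as soon as two edges $e, e'$ of $H$ share two vertices $u,v$, the four binary clauses on $\{y_e,x_u\}, \{y_e,x_v\}, \{y_{e'},x_u\}, \{y_{e'},x_v\}$ form a $\beta$-cycle, even though $H$ itself may be $\beta$-acyclic (the paper exhibits exactly this on its running example). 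The compilation theorem for $\beta$-acyclic CNFs therefore does not apply to your formula. The fix is the order-aware encoding $F^\prec_\calH$, whose clauses $L^\prec_{e,v} = \neg y_e \vee \sigma_e(x_v) \vee \bigvee_{w \in e,\, v \prec w} \neg \sigma_e(x_w)$ are padded with the later variables of $e$ along a $\beta$-elimination order $\prec$ of $H$; this makes the variable sets of the clauses containing $y_e$ nested, so each $y_e$ is a nest point and $\beta$-acyclicity is preserved (\cref{lem:betapreserved,lem:pB:betapreserved}). Your treatment of the incidence-treewidth case is fine --- the naive encoding only increases incidence treewidth by a constant factor --- and everything downstream of the compilation step is correct.
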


Our third and final extension allows us to consider a different overarching goal.
In fact, we consider the problem of finding $k$ best feasible solutions to the optimization problem, rather than only one optimal solution.
For all classes of BPO and \BPOL{} considered in \cref{th BPO short,th general}, we obtain 
polynomial time algorithms to find $k$ best feasible solutions. 
While this result also follows by combining our extended formulations with Theorem~4 in \cite{AngAhmDeyKai15}, our approach yields a direct and practical algorithm that does not rely on extended formulations.

A key feature of our approach is that it also leads to practically efficient algorithms for BPO.
Preliminary computational results show that the resulting algorithms can be significantly faster than current state-of-the-art on some structured instances.

\paragraph{Organization of the paper.} The paper is organized as follows: we first draw a connection between solving a BPO problem and finding optimal solutions of a Boolean function for a given weight function in \cref{sec:preliminary}.
We then explain in \cref{sec: connection} how the Boolean function can be encoded as a CNF formula that preserves the structure of the original BPO problem. \cref{sec:bpokc} explains how such CNF can be turned into Boolean circuits known as d-DNNF where finding optimal solutions is tractable. \cref{sec:beyond} explores generalizations of BPO and show how the d-DNNF representation allows for solving these generalizations by directly transforming the circuit. 
In \cref{sec:extend-form} we present our results about extended formulations.
Finally, \cref{sec:experiments} shows some encouraging preliminaries experiment. The structure of the paper summarizing our approach is shown in \cref{fig:overview}.




\begin{figure}
  \centering

\scalebox{0.7}{
\begin{tikzpicture}[node distance=0.7cm]
\node (BPO) [startstop] {BPO problem $(H,p)$:\\ $\sum_{e \in E} p(e)\prod_{v \in e}x(v)$};
\node (BOP) [startstop, right=of BPO] {Boolean Maximization \\ Problem $(f_H, w_p)$ \\ \cref{sec:preliminary}, \cref{thm:bpotobop}};
\node (CNF) [startstop, right=of BOP] {CNF encoding(s) \\ $F_H$ of $f_H$ \\ \cref{sec: connection}};
\node (dDNNF) [startstop, right=of CNF] {d-DNNF $C$ \\ representing $f_H$ \\ \cref{sec:bpokc}};
\node (OPT) [startstop, below=of CNF] {Optimal solution \\ extracted from $C$ \\ \cref{thm:maxplus_amc}};
\node (BEY) [startstop, below=of dDNNF, right=of OPT] {Generalization of BPO \\ by transforming $C$ \\ \cref{sec:beyond}};
\node (EXT) [startstop, below=of dDNNF, right=of BEY] {Extended formulations \\ extracted from $C$ \\ \cref{sec:extend-form}};

\draw[->]  (BPO) -- (BOP);
\draw[->]  (BOP) -- (CNF);
\draw[->]  (CNF) -- (dDNNF);
\draw[->]  (dDNNF) -- (OPT);
\draw[->]  (dDNNF) -- (BEY);
\draw[->]  (dDNNF) -- (EXT);

\end{tikzpicture}
}

\caption{Structure of the paper.}
\label{fig:overview}
\end{figure}

\section{Binary Polynomial Optimization and \BOP}
\label{sec:preliminary}

In this section, we detail a connection between solving a BPO problem and finding optimal solutions of a Boolean function for a given weight function.

\paragraph{Binary Polynomial Optimization Problem.} 
Our first goal is to redefine Binary Polynomial Optimization problems using hypergraphs.
A \emph{hypergraph} $H$ is a pair $(V,E)$, where $V$ is a finite set of \emph{vertices} $V$ and $E \subseteq 2^V$ is a set of \emph{edges.} 
Given a hypergraph $H=(V,E)$ and a profit function $p \from E \to \Q$, the \emph{Binary Polynomial Optimization} problem for $(H,p)$ 
is defined as finding an \emph{optimal solution} $x^* \in \{0,1\}^V$ 
to the following maximization problem: $\max_{x \in \{0,1\}^V} P_{(H,p)}(x)$ where $P_{(H,p)}$ is the polynomial defined as $P_{(H,p)}(x) = \sum_{e \in E} p(e)\prod_{v \in e} x(v)$. 
We denote this problem as $\BPO$.  
The value $P_{(H,p)}(x^*)$  
is called the \emph{optimal value} of $\BPO$.

\paragraph{\BOP.} 
A closely related notion is the notion of \emph{\BOP}. Let $X$ be a finite set of variables. We denote by $\{0,1\}^X$ the set of \emph{assignments} on variables $X$ to Boolean values, that is, the set of mappings $\tau \from X \to \{0,1\}$. A \emph{Boolean function} on variables $X$ is a subset of assignments $f \subseteq \{0,1\}^X$. An assignment $\tau \in f$ is said to be a \emph{satisfying assignment} of $f$.
A weight function $w$ on variables $X$ is a mapping $w: X \times \{0,1\} \to \Q$. Given $\tau \in \{0,1\}^X$, we let $w(\tau) = \sum_{x \in X} w(x, \tau(x))$. Given a Boolean function $f$ on variables $X$, we let  $w(f) = \max_{\tau \in f} w(\tau)$. That is
\[w(f) = \max_{\tau \in f} \sum_{x \in X} w(x, \tau(x)).\]
This value can be seen as a form of Algebraic Model Counting on the $(\Q \cup \{-\infty\}, \max, +, -\infty, 0)$ semiring, which is most of the time $\NP$-hard to compute but for which we know some tractable classes~\cite{kimmig2017algebraic}.
Given a Boolean function $f$ and a weight function $w$, the \emph{\BOP{}} $(f,w)$ is defined as the problem of computing an optimal solution $\tau^* \in f$, that is, an assignment $\tau^* \in \{0,1\}^X$ such that $w(\tau^*) = w(f)$.

\paragraph{BPO as \BOP.} There is a pretty straightforward connections between the BPO problem and the \BOP, which is based on the notion of multilinear sets introduced by Del Pia and Khajavirad in~\cite{dPKha17MOR}. 
Given a hypergraph $H=(V,E)$, the \emph{multilinear set $f_H$ of $H$} is the Boolean function on variables $X \cup Y$ where $X = \{x_v\}_{v \in V}$ and $Y = \{y_e\}_{e \in E}$ such that $\tau \in f_H$ if and only if for every $e \in E$, $\tau(y_e) = \prod_{v \in e} \tau(x_v)$. 

Moreover, given a profit function $p \from E \to \Q$, we define a weight function $w_p \from X \cup Y \times \{0,1\} \to \Q$ as: 
\begin{itemize}
    \item for every $v \in V$ and $b \in \{0,1\}$, $w_p(x_v, b) = 0$, 
    \item for every $e \in E$ and $b \in \{0,1\}$, $w_p(y_e, b) = b \times p(e)$.
\end{itemize}
In the next lemma, we show that the BPO instance $(H,p)$ is equivalent to the \BOP{} $(f_H, w_p)$, in the sense that any $\tau \in f_H$  can naturally be mapped to a feasible point $x_\tau \in \{0,1\}^V$ whose value is the weight of $\tau$. More formally:

\begin{lemma}
  \label{lem:bpotobop}
  For every $\tau \in f_H$, we have $P_{(H,p)}(x_\tau) = w_p(\tau)$ where the mapping $x_\tau \in \{0,1\}^V$ is defined as $x_\tau(v) = \tau(x_v)$ for every $v \in V$.
\end{lemma}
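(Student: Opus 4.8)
The plan is to prove the identity $P_{(H,p)}(x_\tau) = w_p(\tau)$ by unfolding both sides according to their definitions and matching the resulting expressions term by term. The entire statement is really a bookkeeping verification that the weight function $w_p$ was set up precisely so that it mirrors the BPO objective, so the proof should be short and direct rather than requiring any clever idea.

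First I would expand the left-hand side using the definition of $P_{(H,p)}$, so that
\[
P_{(H,p)}(x_\tau) = \sum_{e \in E} p(e) \prod_{v \in e} x_\tau(v).
\]
Then I would substitute the definition $x_\tau(v) = \tau(x_v)$ to rewrite the product over the edge, giving $\prod_{v \in e} \tau(x_v)$. Here is the one place where the hypothesis $\tau \in f_H$ is actually used: by the definition of the multilinear set $f_H$, membership $\tau \in f_H$ forces $\tau(y_e) = \prod_{v \in e} \tau(x_v)$ for every $e \in E$. Therefore the product collapses to the single value $\tau(y_e)$, and the left-hand side becomes $\sum_{e \in E} p(e)\, \tau(y_e)$.

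Next I would expand the right-hand side. By definition, $w_p(\tau) = \sum_{z \in X \cup Y} w_p(z, \tau(z))$, which splits into a sum over the $x_v$ variables and a sum over the $y_e$ variables. The $X$-part vanishes because $w_p(x_v, b) = 0$ for all $v$ and $b$. The $Y$-part is $\sum_{e \in E} w_p(y_e, \tau(y_e))$, and since $w_p(y_e, b) = b \times p(e)$, this equals $\sum_{e \in E} \tau(y_e)\, p(e)$. This matches exactly the expression obtained for the left-hand side, completing the argument.

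Since the statement is a definitional identity, there is no genuine obstacle; the only thing to be careful about is making explicit the single step where $\tau \in f_H$ is invoked, namely the replacement of $\prod_{v \in e} \tau(x_v)$ by $\tau(y_e)$. This is the hinge of the proof, as it is what connects the polynomial's monomial structure to the linearized weight on the auxiliary variables $y_e$. Everything else is routine simplification, and I would present the computation as a single chain of equalities so that the correspondence between the two problems is transparent.
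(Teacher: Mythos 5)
Your proof is correct and is essentially the same computation as the paper's, which simply runs the chain of equalities in the opposite direction (starting from $w_p(\tau)$ and arriving at $P_{(H,p)}(x_\tau)$). The key step you identify — using $\tau \in f_H$ to replace $\prod_{v \in e}\tau(x_v)$ by $\tau(y_e)$ — is exactly the hinge of the paper's argument as well.
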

\begin{proof}
Recall that by definition, $\tau \in f_H$ if and only if $\tau(y_e) = \prod_{v \in e} \tau(x_v)$. We have:
  \begin{align*}
    w_p(\tau) & = \sum_{v \in V} w_p(x_v, \tau(x_v)) + \sum_{e \in E} w_p(y_e, \tau(y_e)) \\
              & = \sum_{v \in V} 0 + \sum_{e \in E} \tau(y_e) p(e)  \text{ by definition of $w_p$}\\
              & = \sum_{e \in E} p(e) \prod_{v\in e} \tau(x_v)   \text{ since $\tau \in f_H$}\\
              & = \sum_{e \in E} p(e) \prod_{v\in e} x_\tau(x_v)   \text{ by definition of $x_\tau$}\\
              & = P_{(H,p)}(x_\tau).
  \end{align*}
\end{proof}

\begin{theorem}
  \label{thm:bpotobop}
  Let $(H,p)$ be an instance of BPO. The set of optimal solutions of $\BPO$ is equal to $\{ x_{\tau^*} \mid \tau^* \text{ is an optimal solution of } (f_H,w_p)\}$.
\end{theorem}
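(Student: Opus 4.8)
The plan is to establish the claimed equality of sets by showing a tight correspondence between the optimal values of the two problems, and then leveraging \cref{lem:bpotobop} to match up the optimizers. The central quantity is the optimal value of $\BPO$, namely $\max_{x \in \{0,1\}^V} P_{(H,p)}(x)$, which I want to show equals $w_p(f_H) = \max_{\tau \in f_H} w_p(\tau)$.

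First I would observe that the map $\tau \mapsto x_\tau$ sending $f_H$ into $\{0,1\}^V$ is \emph{surjective}: given any $x \in \{0,1\}^V$, there is a unique extension to an assignment $\tau \in f_H$ obtained by setting $\tau(x_v) = x(v)$ for every $v \in V$ and forcing $\tau(y_e) = \prod_{v \in e} \tau(x_v)$ for every $e \in E$, which is exactly the defining constraint of $f_H$. This $\tau$ satisfies $x_\tau = x$. Thus every binary point $x$ arises as $x_\tau$ for some $\tau \in f_H$, and moreover the fibers of this map are singletons (the $Y$-coordinates are determined by the $X$-coordinates), so $\tau \mapsto x_\tau$ is in fact a bijection between $f_H$ and $\{0,1\}^V$.

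Combining surjectivity with \cref{lem:bpotobop}, the two sets $\{P_{(H,p)}(x) : x \in \{0,1\}^V\}$ and $\{w_p(\tau) : \tau \in f_H\}$ coincide, since every value on one side equals a value on the other via $P_{(H,p)}(x_\tau) = w_p(\tau)$. Taking maxima gives the equality of optimal values. For the statement about optimizers, I would argue both inclusions. If $\tau^*$ is optimal for $(f_H, w_p)$, then $w_p(\tau^*) = w_p(f_H)$ equals the optimal value of $\BPO$, and by \cref{lem:bpotobop} we have $P_{(H,p)}(x_{\tau^*}) = w_p(\tau^*)$, so $x_{\tau^*}$ attains the BPO optimum and is an optimal solution of $\BPO$. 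Conversely, if $x^*$ is optimal for $\BPO$, let $\tau^*$ be the unique preimage with $x_{\tau^*} = x^*$; then $w_p(\tau^*) = P_{(H,p)}(x^*)$ equals the optimal value, so $\tau^*$ is optimal for $(f_H, w_p)$ and $x^* = x_{\tau^*}$ lies in the right-hand set.

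This proof is essentially a bookkeeping argument, so I do not anticipate a deep obstacle; the one point requiring care is the surjectivity/bijectivity of $\tau \mapsto x_\tau$, which is what guarantees that no BPO solution is missed and that the value sets genuinely coincide rather than one merely dominating the other. The equality of maxima alone would not immediately yield equality of the optimizer sets without this correspondence, so I would make sure to state the bijection explicitly and to verify that the weight function $w_p$, having $w_p(x_v, b) = 0$, contributes nothing from the $X$-variables, which is precisely why the value of $\tau$ depends only on $x_\tau$ and the profits $p(e)$.
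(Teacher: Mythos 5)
Your proof is correct and follows essentially the same route as the paper's: both arguments rest on the observation that $\tau \mapsto x_\tau$ is a bijection between $f_H$ and $\{0,1\}^V$ (the $Y$-coordinates being forced by the $X$-coordinates), and both then transfer optimality in each direction via \cref{lem:bpotobop}. Your explicit emphasis on surjectivity and on the equality of the two value sets is a slightly more structured phrasing of exactly the reasoning the paper uses.
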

\begin{proof}
  First of all, observe that given $x \in \{0,1\}^V$, there exists a unique $\tau \in f_H$ such that $x = x_{\tau}$. Indeed, we define $\tau(x_v) = x(v)$ and $\tau(y_e) = \prod_{v \in e} x(v)$. It is readily verified that $\tau \in f_H$ and that $x = x_\tau$. Let $x^*$ be an optimal solution of $\BPO$ and let $\tau$ be such that $x^* = x_\tau$. We claim that $\tau$ is an optimal solution of $(f_H,w_p)$. Indeed, let $\tau' \in f_H$. By \cref{lem:bpotobop}, $w_p(\tau') = P_{(H,p)}(x_{\tau'})$ and $P_{(H,p)}(x_{\tau'}) \leq P_{(H,p)}(x^*)$ since $x^*$ is optimal. Moreover, $P_{(H,p)}(x^*) = w_p(\tau)$ by \cref{lem:bpotobop} again. Hence $w_p(\tau') \leq w_p(\tau)$. In other words, $\tau$ is an optimal solution of $(f_H,w_p)$.
  Now let $\tau^*$ be an optimal solution of $(f_H,w_p)$. By a symmetric reasoning, we show that $x_{\tau^*}$ is an optimal solution of $\BPO$. Indeed, for every $x \in \{0,1\}^V$, if $\tau \in f_H$ is such that $x=x_\tau$; by \cref{lem:bpotobop},  we have $P_{(H,p)}(x) = w_p(x_\tau) \leq w_p(\tau^*) = P_{(H,p)}(x_{\tau^*})$.
\end{proof}

This connection is at the heart of our main technique for solving BPO problems in this paper: if $H$ is such that $w_p(f_H)$ is known to be tractable, then we can use \cref{thm:bpotobop} to leverage these
results on Boolean function directly to BPO. We study in the next section some Boolean function for which the \BOP{} is tractable.

\section{Encoding BPO as CNF formulas}\label{sec: connection}

In the previous section, we have established a relation between a BPO problem $(H,p)$ and the \BOP{} problem $(f_H, w_p)$ of finding an optimal solution to the Boolean function $f_H$ for the weight function $p$. 
This connection is merely a rewriting of the original problem and, without any detail on how $f_H$ is encoded, it does not provide any insight on the complexity of solving BPO. 
In this section, we explore encodings of $f_H$ in Conjunctive Normal Form (CNF), which is a very generic way of encoding Boolean functions. 
This is also 
the usual input of SAT solvers. We compare the structure of $H$ and the structure of our CNF encodings. In \cref{sec:bpokc}, we will use this structure to discover tractable classes of BPO.

\paragraph{CNF formulas.} Given a set of variables $X$, a literal over $X$ is an element of the form $x$ or $\neg x$ for some $x \in X$. Given an assignment $\tau \in \{0,1\}^X$, we naturally extend it to literals by defining $\tau(\neg x) = 1-\tau(x)$. A \emph{clause} $C$ over $X$ is a set of literals over $X$. We denote by $\var{C}$ the set of variables appearing in $C$. An assignment $\tau$ satisfies a clause $C$ if and only if there exists a literal $\ell \in C$ such that $\tau(\ell) = 1$. A \emph{Conjunctive Normal Form formula $F$ over $X$}, CNF formula for short, is a set of clauses over $X$. We denote by $\var{F}$ the set of variables appearing in $F$. An assignment $\tau$ satisfies $F$ if and only if it satisfies every clause in $F$. Hence a CNF formula over $X$ naturally induces a Boolean function over $X$, which consists of the set of satisfying assignments of $F$.

\paragraph{A first encoding.}



In this section, we fix a hypergraph $H=(V,E)$ and a profit function $p$. 
From \cref{thm:bpotobop}, one can think of the BPO instance $(H,p)$ as a Boolean function that expresses the structure of the polynomial objective function through constraints of the form $y_e = \prod_{v \in e} x_v$, for all $e \in E$. 
From a logical point of view, it corresponds to the logical equivalence $y_e \iff \bigwedge_{v \in e} x_v$, for all $e \in E$. In fact, given $e \in E$, $y_e = 1$ if and only if all the variables present in that specific monomial represented by $e$ have value equal to 1, i.e., $x_v = 1$ for all $v \in e$. 
Let us now consider some $e \in E$, and  see more in detail how the two implications of the above if and only if are expressed both from an integer optimization point of view and from a more logical point of view.

On the one hand, we must encode that if $x_v = 1$ for all $v \in e$, then $y_e=1$. 
In the integer optimization approach, the typical single constraint that is added in this case is $\sum_{v \in e} x_v - y_e \leq |e|-1$. 
The most straightforward way of encoding this as a CNF formula is as $y_e \vee \bigvee_{v \in e} \neg x_v$.
In fact, if $x_v = 1$ for all $v \in e$ then, for the formula to be satisfied, we need to have $y_e = 1$.

On the other hand, we also want to model that if $y_e = 1$ then each $v \in e$ must satisfy $x_v = 1$ too. 
In the optimization community this is usually done by adding the constraints $y_e \leq x_v$ for every $v \in e$. The easiest way to do with a CNF formula is by adding the clause $\neg y_e \vee x_v$ for each $v \in e$. If $y_e = 1$, then the clause can only be satisfied if $x_v = 1$.

The above discussion motivates the following definition: $F_H$ is the CNF formula on variables $X \cup Y$ 
having the following clauses for every $e \in E$:
\begin{itemize}
    \item $R_e = y_e \vee \bigvee_{v \in e} \neg x_v$,
    \item and for every $v \in e$, $L_{e,v} = \neg y_e \vee x_v$.
    \end{itemize}

    \begin{example}\label{example}
Let us consider the multilinear expression $-3x_1 x_2 x_3 + 4x_4 x_5 + 5x_2 x_3 x_4 x_5 x_6$. 
The typical hypergraph that represents it is $G = (V, E)$ with $V = \{ v_1, v_2, v_3, v_4, v_5, v_6 \}$ and $E$ containing only three edges $\{ v_1, v_2, v_3 \}$, $\{ v_4, v_5 \}$, $\{ v_2, v_3, v_4, v_5, v_6 \}$. 

Now let us construct its CNF encoding, which involves 9 variables and 13 clauses. Specifically it is: 
\begin{gather*}
(\neg y_{e_1} \vee x_{v_1}) \wedge (\neg y_{e_1} \vee x_{v_2}) \wedge (\neg y_{e_1} \vee x_{v_3}) \wedge
(\neg x_{v_1} \vee \neg x_{v_2} \vee \neg x_{v_3} \vee y_{e_1}) \wedge \\ 
(\neg y_{e_2} \vee x_{v_4}) \wedge (\neg y_{e_2} \vee x_{v_5}) \wedge
(\neg x_{v_4} \vee \neg x_{v_5} \vee y_{e_2}) \wedge \\
(\neg y_{e_3} \vee x_{v_2}) \wedge (\neg y_{e_3} \vee x_{v_3}) \wedge (\neg y_{e_3} \vee x_{v_4}) \wedge (\neg y_{e_3} \vee x_{v_5}) \wedge (\neg y_{e_3} \vee x_{v_6}) \wedge  \\
(\neg x_{v_2} \vee \neg x_{v_3} \vee \neg x_{v_4} \vee \neg x_{v_5} \vee \neg x_{v_6} \vee y_{e_3}) .
\end{gather*}
The corresponding hypergraph is depicted in Figure~\ref{fig: first}. 
\hfill $\diamond$
\end{example}

\begin{figure}
\centering
\begin{minipage}{.45\textwidth}
  \centering
  \includegraphics[width=7cm]{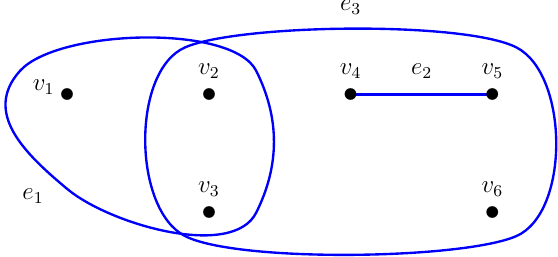}
\caption{Hypergraph $G$ representing $x_1 x_2 x_3 + x_4 x_5 + x_2 x_3 x_4 x_5 x_6$.}\label{fig: multilinear}
\end{minipage} \qquad
\begin{minipage}{.45\textwidth}
\centering
\includegraphics[width=7cm]{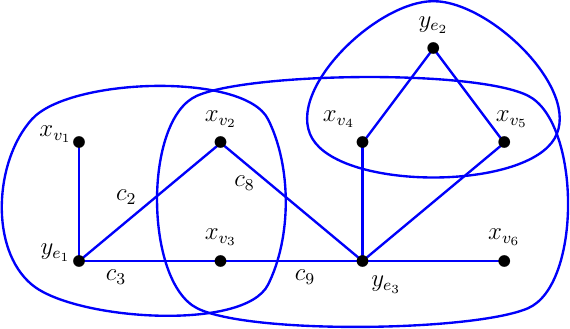}
\caption{Hypergraph representing first CNF construction.}\label{fig: first}
\end{minipage}
\end{figure}

$F_H$ is an actual encoding as witnessed by the following:    
\begin{lemma}
  \label{lem:FH_encodes_fH}
    For every hypergraph $H$, the satisfying assignments of $F_H$ are exacly $f_H$.
\end{lemma}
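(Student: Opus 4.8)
The plan is to prove both set-inclusions at once by characterizing, for a fixed assignment $\tau \in \{0,1\}^{X \cup Y}$, exactly when $\tau$ satisfies $F_H$, and comparing this with the defining condition of $f_H$. Both membership in $f_H$ and satisfaction of $F_H$ are conjunctions of conditions indexed by the edges $e \in E$: membership in $f_H$ asks that $\tau(y_e) = \prod_{v \in e} \tau(x_v)$ for every $e$, while satisfaction of $F_H$ asks that $\tau$ satisfy $R_e$ together with each $L_{e,v}$ for every $e$. It therefore suffices to fix a single edge $e$ and establish the edge-local equivalence: $\tau$ satisfies $R_e$ and all clauses $L_{e,v}$ (for $v \in e$) if and only if $\tau(y_e) = \prod_{v \in e} \tau(x_v)$. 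Ranging over all $e \in E$ then yields the lemma.

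First I would rewrite the arithmetic condition for a fixed edge in logical terms. Since the variables are Boolean, $\prod_{v \in e} \tau(x_v) = 1$ precisely when $\tau(x_v) = 1$ for every $v \in e$, and equals $0$ otherwise. Hence $\tau(y_e) = \prod_{v \in e} \tau(x_v)$ is equivalent to the biconditional stating that $\tau(y_e) = 1$ if and only if $\tau(x_v) = 1$ for all $v \in e$, i.e. to $y_e \iff \bigwedge_{v \in e} x_v$ evaluated under $\tau$.

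Next I would match each implication of this biconditional to the corresponding clauses. The forward implication ``all $x_v$ true $\Rightarrow$ $y_e$ true'' fails exactly when $\tau(y_e) = 0$ while $\tau(x_v) = 1$ for all $v \in e$, which is precisely the unique falsifying assignment of $R_e = y_e \vee \bigvee_{v \in e} \neg x_v$; thus $\tau$ satisfies $R_e$ if and only if this implication holds. Symmetrically, the backward implication ``$y_e$ true $\Rightarrow$ all $x_v$ true'' holds if and only if, whenever $\tau(y_e) = 1$, every $\tau(x_v) = 1$, which is exactly the statement that $\tau$ satisfies every clause $L_{e,v} = \neg y_e \vee x_v$. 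Conjoining the two halves recovers the biconditional, giving the edge-local equivalence.

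There is no genuine obstacle here; the argument is a direct case analysis driven by the contrapositives of the two clause types. The only point requiring a moment of care is the degenerate edge $e = \emptyset$: there the empty product $\prod_{v \in e} \tau(x_v)$ equals $1$, forcing $\tau(y_e) = 1$ for membership in $f_H$, while $R_e$ degenerates to the unit clause $y_e$ (the empty disjunction of negated literals being vacuous) and no clause $L_{e,v}$ is created. The two conditions still coincide in this case, so the general argument goes through unchanged.
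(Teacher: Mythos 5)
Your proof is correct and follows essentially the same route as the paper's: identify $R_e$ with the implication $\bigwedge_{v\in e} x_v \Rightarrow y_e$ and the clauses $L_{e,v}$ with $y_e \Rightarrow x_v$, so that their conjunction is the biconditional $y_e \Leftrightarrow \bigwedge_{v\in e} x_v$, which is exactly the defining condition $\tau(y_e)=\prod_{v\in e}\tau(x_v)$ of $f_H$. The only addition is your explicit treatment of the degenerate edge $e=\emptyset$, which the paper's proof leaves implicit but which your argument handles correctly.
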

\begin{proof}
  $F_H$ is the conjunction of the usual clause encoding of constraints of the form $y_e \Leftrightarrow \bigwedge_{v \in e} x_v$, where $R_e$ encodes $\bigwedge_{v \in e} x_v \Rightarrow y_e$ and $L_{e,v}$ encodes $y_e \Rightarrow x_v$ and hence $\bigwedge_{v\in e} L_{e,v}$ corresponds to $y_e \Rightarrow \bigwedge_{v\in e} \tau(x_v)$.  Hence $\tau$ satisifes $F_H$ if and only if $\tau(y_e) \Leftrightarrow \bigwedge_{v \in e} \tau(x_v)$, which is equivalent to say that $\tau(y_e)=\prod_{v \in e} x_v$, that is, $\tau \in f_H$.
\end{proof}

\subsection{A CNF encoding preserving treewidth}\label{sec: treewidth}

We now turn our attention to proving that some structure of hypergraph $H$ can be found in the CNF encoding $F_H$. 
One of the most studied structures that is known to give interesting tractability both for CNF formulas and for BPO is the case of bounded treewidth hypergraphs. 
This class has a long history in the SAT solving community, where the tractability of SAT on CNF having bounded treewidth has been established in several independent work such as~\cite{alekhnovich2002satisfiability,gottlob2002fixed}, later generalized in~\cite{szeider2003fixed}. In the BPO community, several work establish the tractability for bounded treewidth~\cite{WaiJor04,Lau09,BieMun18}. 

\paragraph{Treewidth.}

Let $G=(V,E)$ be a graph. A \emph{tree decomposition $\calT=(T, (B_t)_{t \in V_T})$} is a tree $T=(V_T,E_T)$ together with a labeling $B$ of its vertices, such that the two following properties hold:
\begin{itemize}[noitemsep,nolistsep]
\item \textbf{Covering}: For every $e \in E$, there exists $t \in V_T$ such that $e \subseteq B_t$.
\item \textbf{Connectedness}: For every $v \in V$, $\{t \in V_T \mid v \in B_t\}$ is connected in $T$.
\end{itemize}
An element $B_t$ of $B$ is called a \emph{bag of $\calT$}. The \emph{treewidth of $\calT$} is defined as $\tw{G}[\calT] = \max_{t \in B_t} |B_t|-1$, that is, the size of the biggest bag minus $1$.  The \emph{treewidth of $G$} is defined to be the smallest possible width over every tree decomposition of $G$, that is, $\tw{G} = \min_{\calT} \tw{G}[\calT]$.

Two notions of treewidth can be defined for hypergraphs depending on the underlying graph considered. 
Recall that A \emph{graph $G=(V,E)$} is a hypergraph such that for every $e \in E$, $|e|=2$.
Given a hypergraph $H=(V,E)$, we associate two graphs to it. The \emph{primal graph $\prim[H]$ of $H$} is defined as the graph whose vertex set is $V$ and edge set is $\{\{u,v\} \mid u \neq v, \exists e \in E, \ u,v \in e \}$. Intuitively, the primal graph is obtained by replacing every edge of $H$ by a clique. The \emph{incidence graph $\inc[H]$ of $H$} is defined as the bipartite graph whose vertex set is $V \cup E$ and the edge set is $\{\{v,e\} \mid v \in V, e \in E, v \in e\}$. Similarly, the \emph{primal graph $\prim[F]$ of a CNF formula $F$} is the graph whose vertices are the variables of $F$ and such that there is an edge between variables $x$ and $y$ if and only if there is a clause $C \in F$ such that $x, y \in \var{C}$. The \emph{incidence graph $\inc[F]$ of $F$} is the bipartite graph on vertices $\var{F} \cup F$ such that there is an edge between a variable $x$ and a clause $C$ of $F$  if and only if $x \in \var{C}$. 

Given a hypergraph $H$, its \emph{primal treewidth} $\ptw{H}$ is defined as the treewidth of its primal graph, that is $\ptw{H}=\tw{\prim[H]}$, while its \emph{incidence treewidth} $\itw{H}$ is defined as the treewidth of its incidence graph, that is $\itw{H}=\tw{\inc[H]}$.  
The \emph{primal treewidth of $F$} is defined to be $\tw{\prim[F]}$. 
The \emph{incidence treewidth} of $F$ is defined to be $\tw{\inc[F]}$. 

%




We now prove a relation between the incidence treewidth of $F_H$ and of the incidence treewidth of $H$:
\begin{lemma}
\label{lem:itwFh}
    For every $H=(V,E)$, we have  $\itw{F_H} \leq 2\cdot(1+\itw{H})$.
\end{lemma}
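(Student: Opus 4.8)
The goal is to bound $\itw{F_H}$ in terms of $\itw{H}$. The natural strategy is to start from an optimal tree decomposition $\calT = (T, (B_t)_{t \in V_T})$ of the incidence graph $\inc[H]$, which has width $\itw{H}$, and to transform it into a valid tree decomposition of the incidence graph $\inc[F_H]$, controlling how much each bag grows. The factor $2\cdot(1+\itw{H})$ in the statement strongly suggests that each bag will roughly double in size (from $\itw{H}+1$ vertices to at most $2(\itw{H}+1)$ vertices), so the plan is to show that every vertex of $\inc[F_H]$ can be placed into the decomposition by paying at most a factor-two blow-up per bag.

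\textbf{Identifying the vertices to cover.}
First I would recall what the two incidence graphs look like. The vertices of $\inc[H]$ are $V \cup E$, while the vertices of $\inc[F_H]$ are $\var{F_H} \cup F_H = (X \cup Y) \cup \{R_e, L_{e,v}\}$. So I must account for: the variable-vertices $x_v$ (corresponding to $v \in V$), the variable-vertices $y_e$ (corresponding to $e \in E$), and the clause-vertices $R_e$ and $L_{e,v}$. The key observation is that $\var{R_e} = \{y_e\} \cup \{x_v : v \in e\}$, so $R_e$ is adjacent in $\inc[F_H]$ to exactly the $\inc[H]$-neighborhood of $e$ together with $y_e$ itself; and $\var{L_{e,v}} = \{y_e, x_v\}$, so each $L_{e,v}$ touches only $y_e$ and the single $x_v$.

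\textbf{Building the new decomposition.}
I would first relabel: identify each vertex $v \in V$ of $\inc[H]$ with $x_v$, and each edge $e \in E$ with $y_e$, so $\calT$ is already a tree decomposition on the vertex set $X \cup Y$. Now I need to insert the clause-vertices. For $R_e$: since $e$ was an edge-vertex of $\inc[H]$, by the covering property there is a bag $B_t$ containing $e$ together with all its neighbors $\{v : v \in e\}$ — wait, that is not guaranteed by covering alone, so here is the main technical point. To place $R_e$ I need a single bag containing $y_e$ and all $x_v$ with $v \in e$; such a bag need not exist in $\calT$. This is exactly why we pay a factor of two: I would add each variable $x_v$ a second time into bags, i.e. for each vertex $v$ I would augment the decomposition so that the bag content can simultaneously carry the "$H$-side" copies needed for the clauses. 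Concretely, I expect to replace each bag $B_t$ by $B_t \cup \{y_e : x_v \in B_t \text{ for the relevant } e\}$ or a similar doubling, then attach the clause-vertices $R_e$ and $L_{e,v}$ as new leaf bags hanging off an appropriate node, each such leaf bag being a subset of a doubled bag. I would then verify Covering (every edge of $\inc[F_H]$, i.e. every variable-clause incidence, lies in some bag) and Connectedness (the occurrences of each variable and each clause-vertex form a connected subtree).

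\textbf{The main obstacle.}
The hard part is the placement of the clause-vertices $R_e$, because $\var{R_e}$ can be large (it contains all $|e|$ variables of the edge), and in $\inc[H]$ the vertex $e$ and its neighbors $\{v : v \in e\}$ are typically \emph{not} all forced into a common bag — incidence treewidth precisely avoids that, which is what makes $\itw{H}$ much smaller than $\ptw{H}$. The resolution is that in $\inc[F_H]$ the clause $R_e$ plays the structural role that the edge-vertex $e$ played in $\inc[H]$: the edges $\{x_v, R_e\}$ of $\inc[F_H]$ mirror the edges $\{v, e\}$ of $\inc[H]$. So rather than forcing a giant bag, I would route $R_e$ through the subtree where $y_e$ lives and reuse the connectedness of $y_e$'s occurrences, introducing $R_e$ into each bag along that subtree that also needs an $x_v$ neighbor. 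Making this precise — showing that the introduced duplications never more than double a bag and that connectedness is preserved for every variable and clause — is where the bookkeeping lives, and I expect that to be the genuinely delicate step; the factor $2(1+\itw{H})$ should fall out once the doubling scheme is pinned down correctly.
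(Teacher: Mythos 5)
Your high-level intuition is the right one, and it is essentially the paper's: $R_e$ inherits the structural role of the edge-vertex $e$, the factor $2$ comes from letting $y_e$ ride along with $e$ in every bag, and the degree-two clauses $L_{e,v}$ are just subdivision points of the edges $\{y_e,x_v\}$, which cannot increase treewidth (the paper packages exactly this as two general lemmas: un-contracting the two-element modules $\{e,y_e\}$ at most doubles $1+\tw{\cdot}$, and edge subdivision preserves treewidth). However, as written your construction has a genuine gap precisely in the place you flag as ``where the bookkeeping lives''. First, the requirement you set yourself --- ``to place $R_e$ I need a single bag containing $y_e$ and all $x_v$ with $v\in e$'' --- is not what the covering property asks for: you only need, for each \emph{edge} $\{R_e,x_v\}$ of $\inc[F_H]$, \emph{some} bag containing both endpoints, and these bags are already supplied by the bags of $\calT$ covering $\{e,v\}$ once $e$ is renamed to $R_e$. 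No giant bag is needed, and chasing one is what derails the construction.

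Second, the concrete doubling rule you propose, ``replace each bag $B_t$ by $B_t\cup\{y_e : x_v\in B_t \text{ for the relevant } e\}$'', fails on both counts it must satisfy: a single vertex $v$ can lie in arbitrarily many edges $e$, so this can blow a bag up by far more than a factor of two; and the bags containing some $x_v$ with $v\in e$ need not form a connected subtree (take $e$'s subtree to be a path $B_1$--$B_2$--$B_3$ with $u\in B_1$ only and $v\in B_3$ only), so the occurrences of $y_e$ would violate connectedness. The correct rule is to add $y_e$ to exactly the bags containing the \emph{edge-vertex} $e$, i.e.\ $B_t\mapsto B_t\cup\{y_e : e\in B_t\cap E\}$: this at most doubles each bag, keeps $y_e$'s occurrences connected because $e$'s are, and covers the edge $\{R_e,y_e\}$. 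Each $L_{e,v}$ is then handled by hanging a leaf bag $\{y_e,x_v,L_{e,v}\}$ off any bag that contained both $e$ and $v$ (one exists since $\{e,v\}$ is an edge of $\inc[H]$). With that fix, and the routine verification of covering and connectedness you deferred, your argument goes through and coincides with the paper's.
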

\begin{proof}
  The proof is based on the following observation. Let $G'=(V',E')$ be the graph obtained from $\inc[H]$ as follows: we introduce for every edge $e \in E$ a vertex $y_e$ that is connected to $e$ and to every $v \in e$.  Clearly, $e$ and $y_e$ have the same neighborhood in $G'$ and $\inc[H]$ can be obtained by contracting $e$ and $y_e$ for every $e \in E$. Such a transformation is known as a \emph{modules contraction} in graph theory and one can easily prove that the treewidth of $G'$ is at most $2(1+\itw{H})$ (see \cref{lem:twmodule} in \cref{appendix graphs} for a full proof). Now, one can get $\inc[F_H]$ by renaming every vertex $v$ of $G'$ into $x_v$ and every vertex $e$ into $R_e$ and by splitting the edge $\{y_e,x_v\}$ of $G'$ with a node $L_{e,v}$. Hence $\inc[F_H]$ is obtained from $G'$ by splitting edges, an operation that preserves treewidth (see \cref{lem:twsub} in \cref{appendix graphs} for details). In other words, $\itw{F_H} \leq \tw{G'} \leq 2(1+\itw{H})$.
\end{proof}

We observe here that bounded incidence treewidth also captures a class of hypergraphs that is known to be tractable for BPO: the class of cycle hypergraphs~\cite{dPDiG21IJO}. We define a \emph{cycle hypergraph} as a hypergraph $H=(V,E)$ such that $E=\{e_0,\dots,e_{n-1}\}$ and $e_i \cap e_j \neq \emptyset$ if and only if $j = i+1 \mod n$. We remark that this definition is slightly more general than the one in \cite{dPDiG21IJO}. While the primal treewidth of a cycle hypergraph can be arbitrarily large (since an edge of size $k$ induces a $k$-clique in the primal graph and hence treewidth larger than $k$), the incidence treewidth of cycle hypergraphs is $2$.

\begin{lemma}
  For every cycle hypergraph $H=(V,E)$, we have $\itw{H} \leq 2$.
\end{lemma}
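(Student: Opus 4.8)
The plan is to exhibit an explicit tree decomposition of the incidence graph $\inc[H]$ of width at most $2$; since $\itw{H} = \tw{\inc[H]}$ this gives the claim. Recall that in $\inc[H]$ the edges $e_0, \dots, e_{n-1}$ of $H$ are themselves vertices, so I will use them directly as nodes. The crux is a structural observation: every $v \in V$ belongs to at most two edges of $H$, and whenever it belongs to two they are cyclically consecutive, say $e_i$ and $e_{i+1}$ with indices read modulo $n$. Indeed, if $v \in e_i \cap e_j$ with $i \neq j$, then $e_i \cap e_j \neq \emptyset$, so $i$ and $j$ are cyclically consecutive by the definition of a cycle hypergraph; and no three distinct indices can be pairwise cyclically consecutive modulo $n$ as soon as $n \geq 4$, which forbids $v$ from lying in three edges. (The finitely many small cases $n \leq 3$ are checked by direct inspection.)

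This observation pins down the shape of $\inc[H]$: each $v \in V$ is either a leaf of $\inc[H]$ attached to the single node $e_i$ containing it, or a degree-two node joining two consecutive nodes $e_i, e_{i+1}$. Hence, ignoring the pendant leaves, $\inc[H]$ is the cycle $e_0 e_1 \cdots e_{n-1} e_0$ in which each cycle edge $\{e_i, e_{i+1}\}$ has been replaced by a bundle of internally disjoint length-two paths, one through each vertex of $e_i \cap e_{i+1}$. Conceptually this is a subdivision of a multigraph whose underlying simple graph is $C_n$, with some pendant vertices added; since $\tw{C_n} = 2$ and both edge subdivisions and pendant additions preserve treewidth (cf.\ \cref{lem:twsub}), this already yields $\tw{\inc[H]} \leq 2$.

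To make the bound concrete I would write down the decomposition. Start from the standard width-two path decomposition of $C_n$ with spine bags $B_i = \{e_0, e_i, e_{i+1}\}$ for $i = 1, \dots, n-2$, arranged along a path $B_1, B_2, \dots, B_{n-2}$: here $e_0$ lies in every spine bag and each $e_i$ with $1 \leq i \leq n-1$ occupies a contiguous subpath, so the covering and connectedness conditions hold on the backbone. Then, for each degree-two vertex $v \in e_i \cap e_{i+1}$, attach a fresh leaf bag $\{v, e_i, e_{i+1}\}$ to a spine bag that already contains both $e_i$ and $e_{i+1}$ (namely $B_i$ for a pair with $1 \leq i \leq n-2$, the bag $B_1$ for the pair $e_0, e_1$, and the bag $B_{n-2}$ for the wraparound pair $e_{n-1}, e_0$); and for each degree-one vertex $v \in e_i$, attach a leaf bag $\{v, e_i\}$ to any spine bag containing $e_i$. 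Every bag has at most three elements, so the width is $2$; each incidence edge $\{v, e_i\}$ is covered by the corresponding leaf bag, and connectedness holds for every $e_i$ because its spine occurrences already form a subpath and each attached leaf containing $e_i$ hangs off a spine bag containing $e_i$.

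I expect the main obstacle to be the structural observation of the first step: this is precisely where the cycle-hypergraph hypothesis enters, and it is what fails in the degenerate small cases (for example a vertex shared by all three edges when $n = 3$). Once it is established, the remainder is routine but slightly delicate bookkeeping: verifying the covering and connectedness conditions while accounting for the special role of the node $e_0$, for the wraparound pair $e_{n-1}, e_0$, and for the fact that a single pair $e_i \cap e_{i+1}$ may contain many vertices, each merely adding a parallel leaf bag without increasing the width.
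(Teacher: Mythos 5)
Your proof is correct and follows essentially the same route as the paper's: a path of spine bags each containing $e_0$ together with the two cyclically adjacent edges, plus pendant leaf bags $\{v,e_i,e_{i+1}\}$ or $\{v,e_i\}$ for the vertices. The only substantive difference is that you make explicit the structural fact (each vertex lies in at most two, necessarily consecutive, edges) that the paper uses implicitly in its connectedness check, which is a welcome addition.
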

\begin{proof}
  We give a tree decomposition $T$ for $\inc[H]$ of width $2$. We start by introducing $n$ bags $B_0, \dots, B_{n-1}$ connected by a path such that $B_0 = \{e_0,e_1\}$ and $B_i = \{e_0, e_i, e_{i+1} \}$ for $1 \leq i < n-1$ and $B_{n-1} = \{e_0, e_{n-1}\}$. 
  We connect to $B_i$ one bag $\{e_i, e_{i+1 \mod n}, v\}$ for every $v \in e_i \cap e_{(i+1) \mod n}$ and one bag $\{e_i, v\}$ for every $v \in e_i \setminus (e_{i-1 \mod n} \cup e_{i+1 \mod n})$. 
  We claim that this tree decomposition, which clearly has width $2$, is a tree decomposition of $\inc[H]$. Indeed, every edge of $\inc[H]$ is of the form $(e_i,v)$. Moreover, since $H$ is a cycle hypergraph:
  \begin{itemize}
  \item Either $v \in e_{i-1 \mod n}$ in which case the edge $(e_i,v)$ is covered by the bag $\{e_{i-1 \mod n}, e_i, v\}$ connected to $B_{i-1}$. 
  \item Either $v \in e_{i+1 \mod n}$ in which case the edge $(e_i,v)$ is covered by the bag $\{e_{i+1 \mod n}, e_i, v\}$ connected to $B_{i}$.
  \item Or $v \in e_i \setminus (e_{i-1 \mod n} \cup e_{i+1 \mod n})$ in which case the edge $(e_i,v)$ is covered by the bag $\{e_i,v\}$ connected to $B_i$.
  \end{itemize}
  Hence every edge of $\inc[H]$ is covered by $T$. 
  It remains to show that the connectedness requirement holds.
  Let $w$ be a vertex of $\inc[H]$. Either $w$ is an edge $e_i$ of $H$ or it is a vertex of $H$. Let us start from examining the case where $w$ is an edge of $H$. 
  If $i = 0$, the subgraph induced by $w$ 
  is connected since $e_0$ appears in every $B_i$ and every other bag is connected to some $B_i$. 
  If $w = e_i$, for   $i>0$, then   $e_i$ appears only in $B_i$, $B_{i+1}$ or in bags connected to $B_i$ or $B_{i+1}$. Hence the subgraph of $T$ induced by $e_i$ is connected. Now $w$ may be a vertex $v \in V$. It is easy to see however that $v$ appears in at most one bag of the tree decomposition and is hence connected.
\end{proof}
 
\subsection{A CNF encoding preserving $\beta$-acyclicity}\label{sec: betaacyclic}

Several generalizations of graph acyclicity have been introduced for hypergraphs~\cite{fagin1983degrees}, but in this paper we will be interested in the notion of $\beta$-acyclicity. A good overview of notions of acyclicities for hypergraphs can be found in~\cite{brault2016hypergraph}. They are particularly interesting since both CNF and BPO are tractable on $\beta$-acyclic hypergraphs. 

Let $H=(V,E)$ be a  hypergraph. 
For a vertex $v \in V$, we denote by $E(v) = \{e \in E \mid v \in e\}$ the set of edges containing $v$.
A \emph{nest point of $H$} is a vertex $v \in V$ such that $E(v)$ is ordered by inclusion, that is, for any $e,f \in E(v)$ either $e \subseteq f$ or $f \subseteq e$. $H$ is $\beta$-acyclic if there exists an ordering $(v_1, \dots, v_n)$ of $V$ such that for every $i \leq n$, $v_i$ is a nest point of $H \setminus \{v_1, \dots, v_{i-1}\}$, where $H \setminus W$ is the hypergraph with vertex set $V \setminus W$ and edge set $\{e \setminus W \mid e \in E\}$. Such ordering is called a \emph{$\beta$-elimination order}. 
A hypergraph is \emph{$\beta$-acyclic} if and only if it has a $\beta$-elimination order.  
For a CNF formula, the \emph{hypergraph $H(F)$ of $F$} is the hypergraph whose vertices are $\var{F}$ and edges are $\var{C}$ for every $C \in F$.\footnote{We observe that the incidence graph of a CNF formula is not the same as the incidence graph of its hypergraph. Indeed, if a CNF formula has two clauses $C,D$ with $\var{C}=\var{D}$, then they will give the same hyperedge in $H$ while they will be distinguished in the incidence graph of $F$. However, one can observe that $C$ and $D$ are \emph{modules} in $\inc[F]$.} A CNF formula is said to be $\beta$-acyclic if and only if $H(F)$ is $\beta$-acyclic.

It has been recently proven that the BPO problem is tractable when its underlying hypergraph is $\beta$-acyclic~\cite{dPDiG22SODA,dPDiG23ALG}. This result is akin to known tractability for $\beta$-acyclic CNF formulas~\cite{Capelli17}. To connect both results, it is tempting to use the CNF encoding $F_H$ from previous section. However, one can construct a $\beta$-acyclic hypergraph $H$ where $F_H$ is not $\beta$-acyclic. This can actually be seen on the example from \cref{fig: multilinear}, which is $\beta$-acyclic since $\{v_1,\dots,v_6\}$ is a $\beta$-elimination order but $F_H$, depicted on \ref{fig: first}, is not $\beta$-acyclic, since it contains, for example, the $\beta$-cycle formed by the vertices $\{x_{v_2}, y_{e_1}, x_{v_3}, y_{e_2} \}$ and edges $\{c_2, c_3, c_9, c_8\}$. 


We can nevertheless design a CNF encoding of $f_H$ for $H=(V,E)$ that preserves $\beta$-acyclicity, as follows: given a total order $\prec$ on $V$, we let $F_H^\prec$ be a CNF on variables $X \cup Y$ with $X=(x_v)_{v \in V}$ and $Y=(y_e)_{e \in E}$ having clauses:
\begin{itemize}
\item for every $e \in E$, there is a clause $R_e=y_e \vee \bigvee_{v \in e} \neg x_v$,
\item for every $e \in E$ and $v \in e$, there is a clause $L^\prec_{v,e} = \neg y_e \vee x_v \vee \bigvee_{w \in e, v \prec w} \neg x_w$.
\end{itemize}

The CNF encoding from \cref{example} following the variable order $(v_1,\dots,v_6)$ and its hypergraph are depicted on \cref{fig: second}. 
It is easy to see that this hypergraph is $\beta$-acyclic.

\begin{figure}[h]
\centering

\begin{minipage}{.4 \textwidth}
\includegraphics[width=6cm]{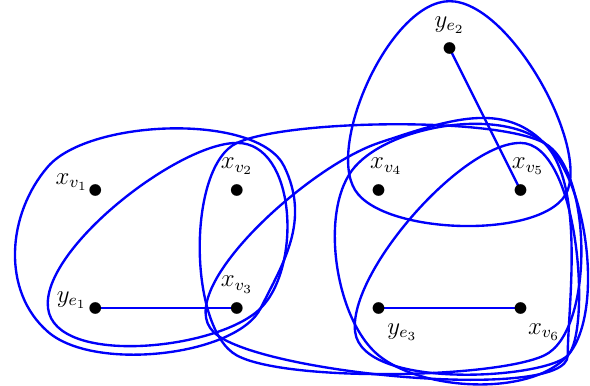}
\end{minipage}
\begin{minipage}{.3\textwidth}
\begingroup
\everymath{\scriptsize}
\scriptsize
\begin{gather*}
  (\neg y_{e_1} \vee x_{v_1} \vee \neg x_{v_2} \vee \neg x_{v_3}) \wedge (\neg y_{e_1} \vee x_{v_2} \vee \neg x_{v_3}) \wedge \\
  (\neg y_{e_1} \vee x_{v_3} ) \wedge (\neg x_{v_1} \vee \neg x_{v_2} \vee \neg x_{v_3} \vee y_{e_1}) \wedge \\ 
  (\neg y_{e_2} \vee x_{v_4} \vee \neg x_{v_5}) \wedge (\neg y_{e_2} \vee x_{v_5}) \wedge 
  (\neg x_{v_4} \vee \neg x_{v_5} \vee y_{e_2}) \wedge \\
  (\neg y_{e_3} \vee x_{v_2} \vee \neg x_{v_3} \vee \neg x_{v_4} \vee \neg x_{v_5} \vee \neg x_{v_6}) \wedge \\
  (\neg y_{e_3} \vee x_{v_3} \vee \neg x_{v_4} \vee \neg x_{v_5} \vee \neg x_{v_6}) \wedge \\
  (\neg y_{e_3} \vee x_{v_4} \vee \neg x_{v_5} \vee \neg x_{v_6}) \wedge (\neg y_{e_3} \vee x_{v_5} \vee \neg x_{v_6}) \wedge \\
  (\neg y_{e_3} \vee x_{v_6} ) \wedge
(\neg x_{v_2} \vee \neg x_{v_3} \vee \neg x_{v_4} \vee \neg x_{v_5} \vee \neg x_{v_6} \vee y_{e_3}).
\end{gather*}
\endgroup
\end{minipage}

\caption{Hypergraph and CNF for $\beta$-acyclic preserving encoding.}\label{fig: second}
\end{figure}

As before, the satisfying assignments of $F_H^\prec$ are the same as $f_H$ since $R_e$ encodes the constraint $\bigwedge_{v \in V}x_v \Rightarrow y_e$ and $\bigwedge_{v \in e} L_{v,e}^\prec$ encodes $y_e \Rightarrow \bigwedge_{v \in e} x_v$.

\begin{lemma}
  \label{lem:FHprec_encodes_fH}
    For every hypergraph $H=(V,E)$ and order $\prec$ on $V$, the satisfying assignments of $F_H^\prec$ are exactly $f_H$.
\end{lemma}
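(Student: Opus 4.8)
The plan is to show that for every assignment $\tau \in \{0,1\}^{X \cup Y}$, we have $\tau$ satisfies $F_H^\prec$ if and only if $\tau \in f_H$, i.e., if and only if $\tau(y_e) = \prod_{v \in e} \tau(x_v)$ for every $e \in E$. Since the clauses $R_e$ are identical to those in $F_H$, and by \cref{lem:FH_encodes_fH} the clause $R_e$ exactly encodes the implication $\bigwedge_{v \in e} x_v \Rightarrow y_e$, the only nontrivial part is to verify that the modified clauses $L^\prec_{v,e}$, taken together over all $v \in e$, encode the reverse implication $y_e \Rightarrow \bigwedge_{v \in e} x_v$. This is the heart of the argument, and the key observation is that the extra literals $\bigvee_{w \in e, v \prec w} \neg x_w$ added to each $L^\prec_{v,e}$ do not change the conjunction of these clauses, because they can be ``peeled off'' one variable at a time in the order $\prec$.

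Concretely, I would fix $e \in E$ and an assignment $\tau$ with $\tau(y_e) = 1$, and argue that $\tau$ satisfies $\bigwedge_{v \in e} L^\prec_{v,e}$ if and only if $\tau(x_v) = 1$ for every $v \in e$. One direction is immediate: if all $\tau(x_v) = 1$ then each $L^\prec_{v,e}$ is satisfied by its literal $x_v$. For the converse I would proceed by a descending induction along $\prec$, starting from the $\prec$-largest vertex $v_{\max}$ of $e$. The clause $L^\prec_{v_{\max},e}$ has no extra negated literals (the set $\{w \in e : v_{\max} \prec w\}$ is empty), so it reduces to $\neg y_e \vee x_{v_{\max}}$; since $\tau(y_e)=1$, satisfaction forces $\tau(x_{v_{\max}}) = 1$. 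Proceeding down the order, suppose $\tau(x_w) = 1$ for all $w \succ v$ in $e$; then every extra literal $\neg x_w$ in $L^\prec_{v,e}$ evaluates to $0$ under $\tau$, so the clause again reduces to $\neg y_e \vee x_v$, and satisfaction forces $\tau(x_v) = 1$. This completes the induction and shows $\tau(x_v) = 1$ for all $v \in e$.

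Combining the two implications, $\tau$ satisfies $R_e \wedge \bigwedge_{v \in e} L^\prec_{v,e}$ if and only if $\tau(y_e) \Leftrightarrow \bigwedge_{v \in e} \tau(x_v)$, which is exactly the defining condition $\tau(y_e) = \prod_{v \in e} \tau(x_v)$. Since this holds independently for every $e \in E$, and $F_H^\prec$ is the conjunction of all these clause groups, $\tau$ satisfies $F_H^\prec$ if and only if $\tau \in f_H$, as desired. The main obstacle, such as it is, is simply making the inductive peeling argument clean: one must be careful that the extra literals appearing in $L^\prec_{v,e}$ range only over $w \succ v$ within $e$, so that the induction is well-founded and each step genuinely collapses the clause to the basic form $\neg y_e \vee x_v$. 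Everything else is a routine unwinding of definitions parallel to the proof of \cref{lem:FH_encodes_fH}.
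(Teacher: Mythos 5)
Your proof is correct and follows the same route the paper takes: the paper merely asserts (in the sentence preceding the lemma) that $R_e$ encodes $\bigwedge_{v\in e} x_v \Rightarrow y_e$ and that $\bigwedge_{v\in e} L^\prec_{v,e}$ encodes $y_e \Rightarrow \bigwedge_{v\in e} x_v$, and your descending induction along $\prec$ is exactly the missing justification for the second claim. No issues.
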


This encoding now preserves $\beta$-acyclicity when the right order is used:
\begin{lemma}
  \label{lem:betapreserved}
  Let $H=(V,E)$ be a $\beta$-acyclic hypergraph and $\prec$ be a $\beta$-elimination order for $H$. Then $F_H^\prec$ is $\beta$-acyclic.
\end{lemma}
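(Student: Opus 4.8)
The plan is to exhibit an explicit $\beta$-elimination order of the hypergraph $H(F_H^\prec)$ by eliminating all the variables $y_e$ first, and then the variables $x_v$ in the order $\prec$. To do so, fix an edge $e \in E$ and write its vertices in increasing $\prec$-order as $e = \{u_1 \prec \dots \prec u_k\}$. Reading off the variable sets of the clauses of $F_H^\prec$, the clause $L^\prec_{u_i,e}$ has variable set $\{y_e, x_{u_i}, x_{u_{i+1}}, \dots, x_{u_k}\}$, while $R_e$ has variable set $\{y_e, x_{u_1}, \dots, x_{u_k}\}$, which coincides with that of $L^\prec_{u_1,e}$. Hence the hyperedges of $H(F_H^\prec)$ that contain $y_e$ form the nested chain
\[
\{y_e, x_{u_k}\} \subseteq \{y_e, x_{u_{k-1}}, x_{u_k}\} \subseteq \dots \subseteq \{y_e, x_{u_1}, \dots, x_{u_k}\},
\]
so that $E(y_e)$ is totally ordered by inclusion and $y_e$ is a nest point. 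Since no clause of $F_H^\prec$ contains two distinct variables of $Y$, deleting any other $y_{e'}$ leaves the hyperedges through $y_e$ untouched, so $y_e$ remains a nest point throughout; thus all of $Y$ can be eliminated first, in any order.

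After removing every $y_e$, each hyperedge $\{y_e\} \cup \{x_{u_i}, \dots, x_{u_k}\}$ becomes $\{x_{u_i}, \dots, x_{u_k}\}$, so the residual hypergraph $H'$ on the variables $X$ is, under the identification $x_v \leftrightarrow v$, the \emph{suffix hypergraph} $\mathrm{Suf}_\prec(H)$ with vertex set $V$ whose edges are all $\prec$-suffixes $\{u_i, \dots, u_k\}$ of edges $e = \{u_1 \prec \dots \prec u_k\}$ of $H$. It therefore remains to show that $(v_1, \dots, v_n)$ is a $\beta$-elimination order of $\mathrm{Suf}_\prec(H)$, and I would prove the more general statement, by induction on $|V|$, that whenever $\prec$ is a $\beta$-elimination order of a hypergraph $G$, it is also a $\beta$-elimination order of $\mathrm{Suf}_\prec(G)$.

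For the inductive step, let $w$ be the $\prec$-minimum vertex of $G$. Because $w$ is globally minimal, it can belong to a suffix of an edge $e$ only when that suffix is all of $e$; consequently the edges of $\mathrm{Suf}_\prec(G)$ containing $w$ are exactly the full edges of $G$ through $w$, that is (the images of) $E(w)$ computed in $G$, which form a chain since $w$ is a nest point of $G$. Hence $w$ is a nest point of $\mathrm{Suf}_\prec(G)$. The key step is then the commutation identity $\mathrm{Suf}_\prec(G) \setminus \{w\} = \mathrm{Suf}_\prec(G \setminus \{w\})$: deleting $w$ from a suffix of $e$ yields precisely a suffix of $e \setminus \{w\}$, and conversely every suffix of $e \setminus \{w\}$ arises this way. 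Granting this identity, and noting that $\prec$ restricted to $V \setminus \{w\}$ is a $\beta$-elimination order of $G \setminus \{w\}$, the induction hypothesis applies to $\mathrm{Suf}_\prec(G \setminus \{w\})$ and completes the step. Applying this with $G = H$ yields the desired elimination order of the $x$-variables, and together with the first-phase elimination of $Y$ this produces a full $\beta$-elimination order of $H(F_H^\prec)$, proving the lemma.

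I expect the main technical point to be the careful verification of the commutation identity, and in particular checking that removing the minimal vertex $u_1$ from the full edge $\{u_1,\dots,u_k\}$ produces exactly $\{u_2,\dots,u_k\}$, which is already the suffix of $e$ starting at $u_2$, so that no spurious edges are created and $\mathrm{Suf}_\prec(G)\setminus\{w\}$ contains precisely the suffixes of the edges of $G\setminus\{w\}$. Once this bookkeeping is in place, the nest-point verifications are immediate from the chain structure isolated in the first two paragraphs.
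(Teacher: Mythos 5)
Your proof is correct and follows essentially the same route as the paper's: eliminate the $y_e$ variables first using the nested chain of clause variable sets through each $y_e$, then eliminate the $x_v$ in $\prec$-order by observing that the residual edges containing the next vertex to be eliminated coincide with edges of the corresponding residual of $H$. Your packaging of the second phase as an induction on the suffix hypergraph with the commutation identity $\mathrm{Suf}_\prec(G)\setminus\{w\}=\mathrm{Suf}_\prec(G\setminus\{w\})$ is just a clean reformulation of the paper's direct verification that $f \setminus V_i = e \setminus V_i$ for every residual edge $f$ containing $v_{i+1}$.
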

\begin{proof}
  We claim that $(y_{e_1}, \dots, y_{e_m}, x_{v_1}, \dots, x_{v_n})$ is a $\beta$-elimination order of $H(F_H^\prec)$, where $V = \{v_1, \dots, v_n\}$ with $v_i \prec v_j$ for any $i<j$ and $E = \{e_1, \dots, e_m\}$ is an arbitrary order on $E$. First we observe that for any $e \in E$, $y_e$ is a nest point of $H(F_H^\prec)$. Indeed, $y_e$ is exactly in $R_e$ and $L_{v,e}$ for every $v \in e$. The fact that $y_e$ is a nest point follows from the fact that $\var{L_{v,e}} \subseteq \var{L_{w,e}}$ if $w \prec v$ and that $\var{L_{v,e}} \subseteq \var{R_e}$ by definition.

  Hence, one can eliminate $y_{e_1}, \dots, y_{e_m}$ from $H(F_H^\prec)$. Let $H'$ be the resulting hypergraph. Observe that the vertices of $H'$ are $\{x_{v_1}, \dots, x_{v_n}\}$. Our goal is to show that $(x_{v_1}, \dots, x_{v_n})$ is a $\beta$-elimination order for $H'$ and we will do it using the fact that $(v_1, \dots, v_n)$ is a $\beta$-elimination order of $H$. 
  In order to ease the comparison between the two hypergraphs, we consider $H'$ to be a hypergraph on vertices $v_1, \dots, v_n$ by simply renaming $x_{v_i}$ to $v_i$. 

  Now, let $V_i = \{v_1, \dots, v_i\}$. We claim that if $f$ is an edge of $H'$ containing $v_{i+1}$ then $f \setminus V_i$ is an edge of $H \setminus V_i$.  
  Indeed, either $f$ is an edge of $H'$ corresponding to a clause $R_e$. In this case, $f = \var{R_e} \setminus \{y_e\}$ since $y_e$ has been removed in $H'$ and hence $f = e$, by definition of $R_e$. Hence $f$ is an edge of $H$, that is, $f \setminus V_i$ is an edge of $H \setminus V_i$.

  Otherwise, $f$ is of the form $\var{L_{v,e}^\prec}\setminus\{y_e\}$ for some $e \in E$ and $v \in e$, that is, $f = e \cap \{w \in V \mid v \preceq w \}$. Since $v_{i+1}$ is in $f$, it means that $v \prec v_{i+1}$ or $v = v_{i+1}$. Hence $f \setminus V_i = e \setminus V_i$.

  Now it is easy to see that $v_{i+1}$ is a nest point of $H' \setminus V_{i}$. Indeed, let $e,f$ be two edges of $H' \setminus V_{i}$ that contains $v_{i+1}$. By definition, $e=e'\setminus V_i$ and $f=f'\setminus V_i$ for some edges $e',f'$ of $H'$. 
  However, 
  $e$ and $f$ are also edges of $H \setminus V_i$. By definition, $v_{i+1}$ is a nest point of $H \setminus V_i$. Hence $e \subseteq f$ or $f \subseteq e$. In any case, it means that $v_{i+1}$ is a nest point of $H' \setminus V_i$ and hence, $(v_1,\dots,v_n)$ is a $\beta$-elimination order of $H'$. 
\end{proof}

\section{Solving BPO using Knowledge Compilation}
\label{sec:bpokc}

The complexity of finding an optimal solution for a \BOP{} $(f,w)$ given a Boolean function $f$ and a weight function $w$ depends on the way $f$ is represented in the input. 
If $f$ is given in the input as a CNF formula, then deciding $w(f)=0$ is $\NP$-hard, since even deciding whether $f$ is satisfiable is $\NP$-hard. Now, if $f$ is given on the input as a truth table, that is, as the list of its satisfying assignments, then it is straightforward to compute $w(f)$ in polynomial time by just computing explicitly $w(\tau)$ for every $\tau \in f$ and return the largest value. This algorithm is polynomial in the size of the truth table, hence, computing $w(f)$ in this setting is tractable. 
However, one can easily observe that this algorithm is polynomial time only because the input was given in a very explicit and non-succinct manner. 
The domain of Knowledge Compilation~\cite{KautzS96} has focused on the study of the representations of Boolean functions, together with their properties, and, more generally, on analysing 
the tradeoff between the succinctness and the tractability of representations of Boolean functions. 
An overview presenting these representations and what is tractable on them can be found in the survey by Darwiche and Marquis~\cite{DarwicheM2002}.

In this paper, we are interested in the \emph{deterministic Decomposable Negation Normal Form} representation, d-DNNF for short, a representation introduced in~\cite{Darwich01}. This data structure has two advantages for our purposes. 
On the one hand, given a Boolean function $f$ represented as a d-DNNF $C$ and a weight function $w$, one can find an optimal solution for $(f,w)$ in polynomial time in the size of $C$~\cite{bourhis20topk,kimmig2017algebraic}. 
On the other hand, d-DNNF can succinctly encode CNF formulas having bounded incidence treewidth or $\beta$-acyclic CNF formulas~\cite{Capelli17,BovaCMS15}.

A \emph{Negation Normal Form circuit} (NNF circuit for short) $C$ on variables $X$ is a labeled directed acyclic graph (DAG) such that:
\begin{itemize}
\item $C$ has a specific node $v$ called the output of $C$ and denoted by $\out$.
\item Every node of $G$ of in-degree $0$ is called an \emph{input of $C$} and is labeled by either $0$, $1$ or a literal $\ell$ over $X$.
\item Every other node of $G$ is labeled by either $\wedge$ or $\vee$. Let $v$ be such a node and  $w$ be such that there is an oriented edge from $w$ to $v$ in $C$. We say that $w$ is \emph{an input of $v$} and we denote by $\inputs$ the set of inputs of $v$.
\end{itemize}
Given a node $v$ of $C$, we denote by $\var{v}$ the subset of $X$ containing the variables $y$ such that there is path from an input labeled by $y$ or $\neg y$ to $v$. Each node of $C$ computes a Boolean function $f_v \subseteq 2^{\var{v}}$ inductively defined as:
\begin{itemize}
\item If $v$ is an input labeled by $0$ (respectively $1$), then $f_v = \emptyset$ (respectively $f_v=2^\emptyset$)
\item If $v$ is an input labeled by $x$ (respectively $\neg x$) then $f_v$ is the Boolean function over $\{x\}$ that contains exactly the one assignment mapping $x$ to $1$ (respectively to $0$)
\item If $v$ is labeled by $\vee$ then $\tau \in f_v$ if and only if there exists $w \in \inputs$ such that $\tau|_{\var{w}} \in f_w$. In other words, $f_v = \bigvee_{w \in \inputs}f_w$.
\item If $v$ is labeled by $\wedge$ then $\tau \in f_v$ if and only if for every $w \in \inputs$ we have $\tau|_{\var{w}} \in f_w$. In other words, $f_v = \bigwedge_{w \in \inputs} f_w$.
\end{itemize}
The Boolean function $f_C$ computed by $C$ over $X$ is defined as $f_{\out} \times 2^{X \setminus \var{\out}}$. 
The \emph{size} of $C$ is defined as the number of edges in the circuit and denoted by $|C|$. 

\begin{figure}
  \centering
  \begin{minipage}{.5\textwidth}
  \includegraphics{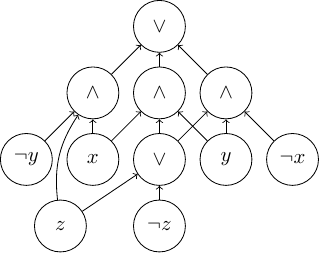}
\end{minipage}
\begin{minipage}{.2\textwidth}
\begin{longtable}[]{@{}lll@{}}
      \toprule
      x & y & z \\
      \midrule
      \endhead
      0 & 1 & 0 \\
      0 & 1 & 1 \\
      1 & 0 & 0 \\
      1 & 1 & 0 \\
      1 & 1 & 1 \\
      \bottomrule
    \end{longtable}
  \end{minipage}
  \caption{A deterministic DNNF and the Boolean function it computes. Observe that each $\wedge$-node mentions disjoint variables ($x$ or $y$ or $z$) and the each $\vee$-node is deterministic: the bottom one because they differ on the value of $z$ and the top one because they differ on the value assigned to $x$ and $y$.}
  \label{fig:ddnnf}
\end{figure}

A $\wedge$-node $v$ is said to be \emph{decomposable} if for every $w,w' \in \inputs$, $\var{w} \cap \var{w'} = \emptyset$. An NNF circuit is a \emph{Decomposable Negation Normal Form circuit}, DNNF for short, if every $\wedge$-node of the circuit is decomposable.

A $\vee$-node $v$ is said to be \emph{deterministic} if for every $\tau \in f_v$, there exists a unique $w \in \inputs$ such that $\tau|_{\var{w}} \in f_w$\footnote{This property is more commonly known as ``unambiguity'' but we stick to the usual terminology of KC in this work.}. A \emph{deterministic Decomposable Negation Normal Form circuit}, d-DNNF for short, is a DNNF where every $\vee$-node is deterministic. An example of d-DNNF is given on \cref{fig:ddnnf}.

Observe that in a d-DNNF circuit, the following identities hold:
\begin{itemize}
\item If $v$ is a $\vee$-node then $f_v = \biguplus_{w \in \inputs} \big(f_w \times 2^{\var{v} \setminus \var{w}}\big)$, where $\uplus$ denotes disjoint union;
\item If $v$ is a $\wedge$-node then $f_v = \bigtimes_{w \in \inputs} f_w$. 
\end{itemize}

These identities allow to design efficient dynamic programming schemes on d-DNNF circuits. In this paper, we will be mostly interested in such dynamic programming scheme to compute optimal solutions to \BOP{}. 
The next theorem is a direct consequence of \cite[Theorem 1]{bourhis20topk} but, since it is stated in a slightly different framework there, we also provide a proof sketch. In this paper, the \emph{size} of numbers, denoted by $\size(\cdot)$, is the standard one in mathematical programming, and is essentially the number of bits required to encode such numbers.
We refer the reader to \cite{SchBookIP} for a formal definition of size.
\begin{theorem}
  \label{thm:maxplus_amc} Let $(f,w)$ be a \BOP{} and $C$ a d-DNNF such that $f = f_C$. We can find an optimal solution $\tau^* \in f$ for $w$ in strongly polynomial time, i.e., with $O(\poly(|C|))$ arithmetic operations on numbers of size $\poly(size(w))$.
\end{theorem}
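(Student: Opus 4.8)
The plan is to run a single bottom-up dynamic program on the DAG $C$, visiting its nodes in reverse topological order (inputs first, $\out$ last). For each node $v$ I would compute the value
\[ m_v = \max_{\tau \in f_v} \sum_{x \in \var{v}} w(x, \tau(x)), \]
with the convention $m_v = -\infty$ when $f_v = \emptyset$, so that all computations take place in the semiring $(\Q \cup \{-\infty\}, \max, +)$. The base cases are read off directly from the labels: an input labeled $1$ gives $m_v = 0$, one labeled $0$ gives $m_v = -\infty$, and one labeled $x$ (resp. $\neg x$) gives $m_v = w(x,1)$ (resp. $m_v = w(x,0)$). To drive the recurrences I would also precompute the single-variable optima $M(x) = \max(w(x,0), w(x,1))$ and, in the same pass, the sets $\var{v}$ together with the sums $S_v = \sum_{x \in \var{v}} M(x)$.

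The two recurrences follow from the structural identities recalled just before the theorem. At a $\wedge$-node, decomposability forces the variable sets $\var{w}$ of its inputs to be pairwise disjoint and to partition $\var{v}$; because $w(\tau)$ is additive over variables and $f_v$ is the Cartesian product of the $f_w$, the maximum splits along this partition and $m_v = \sum_{w \in \inputs} m_w$. At a $\vee$-node the identity $f_v = \biguplus_{w \in \inputs} (f_w \times 2^{\var{v} \setminus \var{w}})$ shows that every $\tau \in f_v$ matches some $f_w$ on $\var{w}$ while being unconstrained on $\var{v} \setminus \var{w}$; optimising the free coordinates one at a time and using $\var{w} \subseteq \var{v}$ yields
\[ m_v = \max_{w \in \inputs} \Bigl( m_w + \sum_{x \in \var{v} \setminus \var{w}} M(x) \Bigr) = S_v + \max_{w \in \inputs}(m_w - S_w). \]
A routine induction over the circuit then proves that $m_v$ equals the intended maximum. (Only decomposability and the disjunction identity are actually needed; determinism plays no role here because $\max$ is idempotent, but we retain the d-DNNF hypothesis to stay inside the framework of \cite{bourhis20topk}.) Since $f_C = f_{\out} \times 2^{X \setminus \var{\out}}$, the optimal value is $w(f) = m_{\out} + \sum_{x \in X \setminus \var{\out}} M(x)$, and an optimal $\tau^*$ is recovered by a top-down pass that records, at each $\vee$-node, a maximising input and, for every free variable, a value attaining $M(x)$.

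For the complexity, the bottom-up pass visits each node once; since every variable of $C$ occurs as an input, $|\var{v}| \le |C|$, so maintaining the sets $\var{v}$ and the sums $S_v$ and evaluating the recurrences costs $O(\poly(|C|))$ arithmetic operations, with a final additive correction over $X \setminus \var{\out}$. Crucially, this count is independent of the magnitudes of the weights, and every number produced is a sum of at most $|X|$ of the input values $w(x,b)$, so its encoding has size $\poly(\size(w))$; together these give strong polynomiality. I expect the delicate point to be the correct treatment of the free variables $\var{v} \setminus \var{w}$ at $\vee$-nodes: this is exactly where the factor $2^{\var{v} \setminus \var{w}}$ must be maximised, and where maintaining the variable sets $\var{v}$ and the sums $S_v$ efficiently is what keeps the running time polynomial in $|C|$ rather than in $2^{|X|}$.
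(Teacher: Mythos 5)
Your proposal is correct and follows essentially the same bottom-up dynamic program as the paper's proof: sum the children's optima at decomposable $\wedge$-nodes, and at $\vee$-nodes maximize over children after optimizing each free variable in $\var{v}\setminus\var{w}$ independently, then recover $\tau^*$ by a backward pass. The only differences are cosmetic (you propagate values $m_v$ and reconstruct the assignment afterwards, and you explicitly cover the $0$/$1$-labeled inputs and the padding on $X\setminus\var{\out}$, which the paper's sketch leaves implicit).
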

\begin{proof}[Proof (sketch).]
  The idea is to  compute inductively, for each node $v$ of $C$, an optimal solution $\tau_v^*$ of the \BOP{} problem $(f_v, w)$, that is, a solution $\tau_v^* \in f_v$ such that $w(\tau_v^*)$ is maximal. 
  We recall that $w(\tau_v^*)$ is equal to $\sum_{x \in \var{v}} w(\tau_v^*, \tau_v^*(x))$. 
  If $v$ is a leaf labeled by literal $\ell$, then $f_v$ contains exactly one assignment, which is hence optimal by definition.

  Now let $v$ be an internal node of $C$ and let $v_1, \dots, v_k$ be its input. Assume that for each $v_i$, an optimal solution $\tau_{v_i}^*$ of $(f_{v_i}, w)$ has been precomputed. If $v$ is a $\wedge$-node, then we have that $f_v = \bigtimes_{i=1}^k f_{v_i}$, by the fact that the circuit is decomposable. It can be easily verified that  $\tau^*_v := \bigtimes_{i=1}^k \tau^*_{v_i}$  is an optimal solution of $f_v$ which proves the induction step in this case. 
  Otherwise, $v$ must be a $\vee$-node and in this case, $f_v = \bigcup_{i=1}^k f_{v_i} \times \{0,1\}^{\var{v} \setminus \var{v_i}}$. An optimal solution $\sigma_i^*$ for $\{0,1\}^{\var{v} \setminus \var{v_i}}$ can be obtained by taking $\sigma_i^*(x) = b \in \{0,1\}$ if $w(x,b) \geq w(x,1-b)$ for every $x \in \var{v} \setminus \var{v_i}$. In this case $ \tau^*_{v_i} \times \sigma^*_i$ is an optimal solution of $f_{v_i} \times \{0,1\}^{\var{v} \setminus \var{v_i}}$. Finally, observe that an optimal solution of $f_v$ has to be an optimal solution for $f_{v_i} \times \{0,1\}^{\var{v} \setminus \var{v_i}}$ for some $i$. Hence, $\tau_v^* = \argmax_{i=1}^k w(\tau_{v_i}^* \times \sigma_i^*)$ is an optimal solution of $f_v$.

    At each gate, we have to compare at most $O(|C|)$ numbers, each of size $\poly(\size(w))$, to decide which optimal solution to take. Since we are doing this at most once per gate, we have an algorithm that computes an optimal solution of $f=f_C$ with $\poly(|C|)$ arithmetic operations on number of size at most $\poly(\size(w))$.
\end{proof}

\cref{thm:maxplus_amc} becomes interesting for our purpose since it is known that both $\beta$-acyclic CNF formulas~\cite{Capelli17} and bounded incidence treewidth CNF formulas~\cite{BovaCMS15} can be represented as polynomial size d-DNNF:

\begin{theorem}[\cite{Capelli17}]
  \label{thm:kcbeta} Let $F$ be a $\beta$-acyclic CNF formula having $n$ variables and $m$ clauses. One can construct a d-DNNF $C$ of size $\poly(n,m)$ in time $\poly(n,m)$ such that $f_C$ is the set of satisfying assignments of $F$.
\end{theorem}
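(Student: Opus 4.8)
The plan is to build the circuit $C$ by a recursive, DPLL-style compilation of $F$ driven by a fixed $\beta$-elimination order $\prec = (v_1, \dots, v_n)$, together with caching so that the result is a DAG rather than a tree. Concretely, I would define a procedure $\mathrm{compile}(F')$ on the residual formulas $F'$ obtained from $F$ by conditioning on partial assignments. The base cases are $F' = \emptyset$, returning the constant $1$, and $F'$ containing the empty clause, returning the constant $0$. Otherwise I first split $F'$ into the connected components $F'_1, \dots, F'_k$ of its primal graph $\prim[F']$; if $k > 1$ I return a $\wedge$-node whose inputs are $\mathrm{compile}(F'_i)$. Since distinct components share no variable, this $\wedge$-node is decomposable by construction. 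If $F'$ is connected, I branch on the first remaining variable $v$ in the order $\prec$ (which is a nest point of the hypergraph of $F'$), returning a $\vee$-node with the two inputs $v \wedge \mathrm{compile}(F'|_{v = 1})$ and $\neg v \wedge \mathrm{compile}(F'|_{v = 0})$. The two branches disagree on $v$, so this $\vee$-node is deterministic. Hence every $\wedge$-node is decomposable and every $\vee$-node is deterministic, so $C$ is a d-DNNF.

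Correctness then follows by a routine induction on the recursion: the function computed at a node equals the set of satisfying assignments of the corresponding residual formula, using the standard facts that conditioning on $v$ partitions the models of $F'$ according to the value of $v$, and that the models of a variable-disjoint union of subformulas is the product of their model sets. These are exactly the identities for $\vee$- and $\wedge$-nodes stated before \cref{thm:maxplus_amc}.

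To keep the object small I would cache on the residual formula: whenever $\mathrm{compile}$ is called on a formula already seen, I reuse the existing node. With caching, the size of $C$ is proportional to the number of distinct residual formulas encountered, so it remains to bound this quantity by $\poly(n,m)$. This is the crux of the argument and the only place where $\beta$-acyclicity is indispensable. The guiding structural observations are: (i) conditioning respecting the order $\prec$ turns each original clause into a residual clause whose variable set is a restriction of $\var{C}$ to the still-unassigned variables, so only polynomially many distinct residual clauses can appear; and (ii) because we always branch on a nest point, the clauses of $F'$ through that variable are totally ordered by inclusion, which restricts how branching can generate genuinely new formulas, forcing the reachable residual formulas to be describable by polynomially many signatures rather than exponentially many.

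I expect step (ii) — the polynomial bound on the number of distinct residual formulas — to be the main obstacle, and the part that genuinely requires $\beta$-acyclicity rather than generic DPLL reasoning. The decomposition-plus-branching construction above compiles \emph{any} CNF into a correct d-DNNF; what $\beta$-acyclicity buys is that the $\beta$-elimination order is precisely the order for which the nest point property prevents the DPLL trace from exploding. Making (ii) rigorous requires a careful counting argument that tracks, for each eliminated nest point, how the totally ordered family of clauses through it collapses the possible residual states; this potential-style accounting, rather than the circuit construction itself, is where the real work lies.
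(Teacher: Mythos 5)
Your construction is correct as far as it goes --- it always produces a d-DNNF equivalent to $F$ --- but the step you defer, the polynomial bound on the number of distinct residual formulas, is not merely the hard part: it is false. What your procedure builds is a \emph{decision}-DNNF (every $\vee$-node is a branch on a single variable, with decomposable $\wedge$-nodes coming from connected components), i.e., exactly the trace of exhaustive DPLL with caching and component decomposition. It is known --- indeed this is one of the main points of the very reference \cite{Capelli17} and of the surrounding literature (see also Chapter 3 of \cite{CapelliPhD}) --- that there are families of $\beta$-acyclic CNF formulas for which every decision-DNNF, and hence every DPLL trace under any branching order, has exponential size. So no accounting argument, potential-style or otherwise, can establish your step (ii): the number of reachable residual formulas genuinely explodes, and branching on nest points of a $\beta$-elimination order does not prevent this.

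The actual proof takes a genuinely different route. It does not run DPLL; it performs a bottom-up dynamic programming along the $\beta$-elimination order whose states are indexed by polynomially many (variable, clause)-type pairs, exploiting the fact that the clauses containing a nest point are totally ordered by inclusion. The resulting circuit is a d-DNNF whose $\vee$-nodes are deterministic for structural reasons tied to this nesting, but they are not decision nodes on a single variable; this extra freedom is exactly what separates d-DNNF from decision-DNNF and is what makes the polynomial upper bound possible. The scaffolding of your write-up (decomposable $\wedge$ from variable-disjointness, determinism of $\vee$, correctness by induction on the circuit) can be retained, but the recursion itself must be replaced by the nest-point dynamic programming rather than variable branching.
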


\begin{theorem}[\cite{BovaCMS15}]
  \label{thm:kctw} Let $F$ be a CNF formula having $n$ variables, $m$ clauses and incidence treewidth $t$. One can construct a d-DNNF $C$ in time $2^t\poly(n,m)$ of size $2^t\poly(n,m)$ such that $f_C$ is the set of satisfying assignments of $F$. 
\end{theorem}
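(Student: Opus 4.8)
The plan is to prove this via dynamic programming along a tree decomposition of the incidence graph $\inc[F]$, turning the \emph{trace} of the computation directly into a d-DNNF. First I would fix a tree decomposition $\calT = (T,(B_u)_u)$ of $\inc[F]$ of width $t$ --- either supplied with the input or computed in time $2^{O(t)}\poly(n,m)$ --- and massage it into a \emph{nice} rooted decomposition whose nodes are of type leaf, introduce-vertex, forget-vertex, and join, with only $\poly(n,m)$ nodes. Since the vertices of $\inc[F]$ are the variables and the clauses of $F$, each bag $B_u$ splits into a set of bag-variables and a set of bag-clauses, each of total size at most $t+1$.

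The core idea is to attach to each tree node $u$ a family of d-DNNF gates indexed by \emph{records} $(\alpha,S)$, where $\alpha$ is an assignment to the bag-variables of $B_u$ and $S$ is a subset of the bag-clauses of $B_u$. The gate $g_{u,(\alpha,S)}$ should compute the Boolean function, over the variables forgotten \emph{strictly below} $u$, consisting of those partial assignments that, together with $\alpha$, satisfy every clause already forgotten in the subtree of $u$ and satisfy exactly the bag-clauses in $S$. Defining the function over the strictly-below variables only (so the bag-variables are \emph{not} inputs of $g_{u,(\alpha,S)}$) is the key design choice: it is what will make the $\wedge$-gates at join nodes decomposable, since the two subtrees of a join share only bag-vertices.

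I would then build the gates bottom-up. Introduce-variable and introduce-clause nodes merely relabel the records and update $S$, creating no new gate. For a forget-variable node eliminating $x$, I set $g_{u,(\alpha',S)} = \bigvee_{b} (\ell_{x,b} \wedge g_{u',(\alpha'\cup\{x\mapsto b\},S)})$ over those $b$ for which the child record exists (where $\ell_{x,b}$ is the input labeled $x$ if $b=1$ and $\neg x$ if $b=0$); the $\wedge$ is decomposable because $x$ is fresh below, and the $\vee$ is deterministic because its disjuncts disagree on $x$. For a join node with children $u_1,u_2$, I set $g_{u,(\alpha,S)} = \bigvee_{S_1\cup S_2 = S} (g_{u_1,(\alpha,S_1)} \wedge g_{u_2,(\alpha,S_2)})$; decomposability follows from the disjointness of the strictly-below variable sets, and determinism holds because any assignment determines a unique pair $(S_1,S_2)$ of bag-clauses satisfied on each side. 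A forget-clause node eliminating $c$ keeps only the records with $c\in S$, which is sound precisely because the connectedness property of $\calT$ guarantees that once $c$ is forgotten no variable of $c$ reappears, so the truth value of $c$ is already fixed. The output is $g_r$ at the root, whose empty bag forces a single record computing exactly the satisfying assignments of $F$.

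For the size bound, each node carries at most $2^{t+1}$ records, and the join gates, whose $\vee$ ranges over the $\le 3^{t+1}$ splittings of $S$, have at most $2^{O(t)}$ inputs; hence the circuit has $2^{O(t)}\poly(n,m)$ edges and is built in the same time, matching the claimed $2^{t}\poly(n,m)$ bound up to the constant in the exponent. The hard part will be enforcing decomposability and determinism \emph{simultaneously}: decomposability is what forces the ``strictly-below'' definition of the gate functions and the deferral of variable-literals to forget nodes, while determinism at the join $\vee$-gates is exactly what the bookkeeping of the satisfied-clause sets $S_1,S_2$ buys us. The crux of the correctness proof is then verifying, using the connectedness of $\calT$, that the forget-clause pruning and the $S_1\cup S_2=S$ merging faithfully reconstruct the satisfying assignments of $F$.
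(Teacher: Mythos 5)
The paper offers no proof of this statement --- it is imported directly from \cite{BovaCMS15} --- and the argument given there is essentially the dynamic program you describe: gates indexed by a bag-variable assignment together with the set of already-satisfied bag clauses, built along a nice tree decomposition of the incidence graph, with decomposability secured by defining each gate over the variables forgotten strictly below and determinism secured by the ``exactly $S$'' bookkeeping. Your reconstruction is correct; the one imprecision is that under your convention (where $S$ also records clauses satisfied by $\alpha$) an introduce-variable node is not a pure relabelling, since several child records $(\alpha,S)$ with $S\cup T_b=S'$ collapse onto one parent record and must be merged by a small deterministic $\vee$, but this is cosmetic and leaves the $2^{O(t)}\poly(n,m)$ bound intact.
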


As a consequence, we can represent the multilinear set of a BPO problem $(H,p)$ with a polynomial size d-DNNF if the incidence treewidth of $H$ is bounded:

\begin{theorem}
  \label{thm:bpo-tw-dnnf} Let $H = (V,E)$ and $t = \itw{H}$. The multilinear set $f_H$ of $H$ can be computed by a d-DNNF of size $2^{O(t)}\poly(|V|,|E|)$ that can be constructed in time $2^{O(t)}\poly(|V|,|E|)$. 
\end{theorem}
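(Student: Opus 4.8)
The plan is to obtain the desired d-DNNF by composing the first CNF encoding of $f_H$ with the knowledge compilation result for bounded incidence treewidth CNF formulas. The whole argument is a chaining of facts already established in the excerpt, so no new combinatorial machinery is needed; the effort is concentrated in the bookkeeping of sizes and in making the construction genuinely algorithmic within the claimed time bound.

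First I would take the CNF formula $F_H$, whose clauses are the $R_e$ and the $L_{e,v}$. By \cref{lem:FH_encodes_fH}, the satisfying assignments of $F_H$ are exactly $f_H$, so it suffices to produce a d-DNNF computing $F_H$. Next I would record the size of $F_H$: it has $n = |V| + |E|$ variables, and its clauses are one $R_e$ per edge together with $|e| \leq |V|$ clauses $L_{e,v}$ per edge, so the number of clauses is $m = \sum_{e \in E}(1 + |e|) = O(|V|\,|E|)$ and the total encoding length is $\poly(|V|,|E|)$. Then I would invoke \cref{lem:itwFh} to bound $\itw{F_H} \leq 2(1 + t)$, where $t = \itw{H}$; crucially, the proof of that lemma is constructive, producing an explicit tree decomposition of $\inc[F_H]$ of width $2(1+t)$ out of a tree decomposition of $\inc[H]$ via a module contraction and edge subdivisions. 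Feeding $F_H$, with width parameter $t' = 2(1+t) = O(t)$, into \cref{thm:kctw} yields a d-DNNF $C$ constructed in time $2^{t'}\poly(n,m) = 2^{O(t)}\poly(|V|,|E|)$ and of size $2^{O(t)}\poly(|V|,|E|)$ whose computed function $f_C$ equals the set of satisfying assignments of $F_H$. Combining this with \cref{lem:FH_encodes_fH} gives $f_C = f_H$, as required.

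Since everything reduces to applying previously proved statements, there is no genuine obstacle; the one point that needs care is supplying \cref{thm:kctw} with a tree decomposition of $\inc[F_H]$ of width $O(t)$ inside the stated time budget. I would handle this by first computing a tree decomposition of $\inc[H]$ of width $O(t)$ using a fixed-parameter treewidth algorithm, and then transforming it into one of $\inc[F_H]$ following the construction in the proof of \cref{lem:itwFh}. As $t$ is the sole relevant parameter and all remaining quantities are polynomial in $|V|$ and $|E|$, both the construction time and the output size stay within $2^{O(t)}\poly(|V|,|E|)$, completing the proof.
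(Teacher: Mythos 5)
Your proposal is correct and follows essentially the same route as the paper: construct $F_H$, note it has $|V|+|E|$ variables and at most $|E|(1+|V|)$ clauses with incidence treewidth at most $2(1+t)$ by \cref{lem:itwFh}, and then apply \cref{thm:kctw}. Your extra care about algorithmically supplying a tree decomposition of $\inc[F_H]$ is a reasonable refinement but does not change the argument.
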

\begin{proof}
  We construct $F_H$ from $H$ in time $\poly(|E|,|V|)$. It has $n=|V|+|E|$ variables, $m \leq |E|(1+|V|)$ clauses and incidence treewidth at most $2(t+1)$ by \cref{lem:itwFh}. Hence the construction follows by applying \cref{thm:kctw} to $F_H$.
\end{proof}

Symmetrically, if $H$ is $\beta$-acyclic we get:
\begin{theorem}
  \label{thm:bpo-beta-dnnf} Let $H = (V,E)$ be a $\beta$-acyclic hypergraph. The multilinear set $f_H$ of $H$ can be computed by a d-DNNF of size $\poly(|V|,|E|)$ that can be constructed in time $\poly(|V|,|E|)$. 
\end{theorem}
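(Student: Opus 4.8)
The plan is to mirror the proof of \cref{thm:bpo-tw-dnnf}, with one crucial change: since the first encoding $F_H$ need not be $\beta$-acyclic even when $H$ is (as witnessed by the example around \cref{fig: second}), I would replace it by the $\beta$-acyclicity-preserving encoding $F_H^\prec$ introduced in \cref{sec: betaacyclic}. The first step is therefore to compute a $\beta$-elimination order $\prec$ of $H$. This can be done in time $\poly(|V|,|E|)$ by greedily eliminating nest points: at each stage, scan the remaining vertices for one whose set of incident edges is totally ordered by inclusion, append it to the order, and delete it. Since $H$ is $\beta$-acyclic such a vertex always exists, and the resulting hypergraph stays $\beta$-acyclic, so the greedy process produces a valid $\beta$-elimination order; each nest-point test costs only $\poly(|V|,|E|)$.

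Once $\prec$ is available, I would construct the CNF $F_H^\prec$ explicitly. It has $n = |V| + |E|$ variables and, for every $e \in E$, one clause $R_e$ together with $|e|$ clauses $L^\prec_{v,e}$, giving at most $|E|(1 + |V|)$ clauses in total. Hence $F_H^\prec$ has size $\poly(|V|,|E|)$ and is built in polynomial time. By \cref{lem:FHprec_encodes_fH} its satisfying assignments are exactly $f_H$, and by \cref{lem:betapreserved}, applied to the order $\prec$, the formula $F_H^\prec$ is $\beta$-acyclic.

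It then remains to invoke \cref{thm:kcbeta} on $F_H^\prec$: being $\beta$-acyclic with $\poly(|V|,|E|)$ variables and clauses, it admits a d-DNNF $C$ of size $\poly(|V|,|E|)$, constructible in time $\poly(|V|,|E|)$, whose computed function $f_C$ equals the set of satisfying assignments of $F_H^\prec$, that is, $f_H$. This $C$ is the desired d-DNNF for the multilinear set, and the size and time bounds follow immediately.

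The only step requiring genuine care is the polynomial-time computation of the $\beta$-elimination order, since \cref{lem:betapreserved} takes such an order as input; I would want to state clearly why greedy nest-point elimination is correct (existence of a nest point and hereditariness of $\beta$-acyclicity under nest-point removal). Everything else is bookkeeping plus a direct application of \cref{lem:FHprec_encodes_fH,lem:betapreserved} and \cref{thm:kcbeta}, so I do not anticipate any serious obstacle, as all the structural work was already carried out in \cref{sec: betaacyclic}.
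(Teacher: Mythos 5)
Your proposal is correct and follows exactly the route the paper intends (the paper leaves this proof implicit, saying only that the $\beta$-acyclic case is ``symmetric'' to \cref{thm:bpo-tw-dnnf}): build $F_H^\prec$ for a $\beta$-elimination order $\prec$, invoke \cref{lem:FHprec_encodes_fH,lem:betapreserved}, and apply \cref{thm:kcbeta}. Your added justification that a $\beta$-elimination order can be computed in polynomial time by greedy nest-point elimination is a detail the paper glosses over, and it is a welcome one.
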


In particular, the computational complexity result presented in \cref{th BPO short} follows from \cref{thm:maxplus_amc} together with \cref{thm:bpo-beta-dnnf} for the first part and with \cref{thm:bpo-tw-dnnf} for the last part.

\section{Beyond BPO}
\label{sec:beyond}

In this section, we show the versatility of our circuit-based approach by showing how we can use it to solve optimization problems beyond BPO.

\subsection{Adding Cardinality Constraints}
\label{sec:cardinality-constraints}

We start by focusing on solving BPO problems that are combined, for example, with cardinality or modulo constraints, that is, a BPO problem with additional constraints on the value of $\sum_{v \in V} x(v)$. 
In this section, we will be interested in the \emph{extended cardinality constraints} that we define as constraints of the form $\sum_{v \in V} x(v) \in S$, for some $S \subseteq [n]$ where $n=|V|$.

Our approach is based on a transformation of the d-DNNF so that they only accept assignments whose number of variables set to $1$ is constrained. The key result is the following one:

\begin{theorem}
  \label{thm:hamming-constraints} Let $C$ be a d-DNNF on variables $Z$ and let $X\subseteq Z$ with $p=|X|$. One can construct a d-DNNF $C'$ in time $O(\poly(|C|))$ and of size $O(\poly(|C|))$ such that there exists nodes $r_0, \dots, r_p$ in $C'$ with $f_{r_i} = f_C \cap B_i(X,Z)$ where $B_i(X,Z)=\{\tau \in \{0,1\}^Z \mid \sum_{x \in X \cap Z} \tau(x)=i\}$ for $i \in [p]$.
\end{theorem}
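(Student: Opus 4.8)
The plan is to track, at every node $v$ of $C$, the number of variables of $X$ that an accepted assignment sets to $1$, by splitting $v$ into several copies indexed by this count. It is convenient to first \emph{smooth} $C$: by the standard smoothing operation (conjoining a tautology $x \vee \neg x$ for each missing variable) we may assume, at the cost of only a polynomial increase in size and while preserving decomposability and determinism, that every $\vee$-node has all its inputs with the same scope and that $\var{\out} = Z$. Writing $p_v = |X \cap \var{v}|$, I will construct by induction on $C$ in topological order nodes $v^0, \dots, v^{p_v}$ satisfying $f_{v^j} = \{\tau \in f_v : \sum_{x \in X \cap \var{v}} \tau(x) = j\}$; the nodes $r_i := \out^i$ for $i = 0, \dots, p$ then satisfy the statement, since $\var{\out}=Z$ forces $f_{\out}=f_C$ and $X \cap \var{\out}=X$.

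The base and $\vee$ cases are immediate. For an input labelled $0$, $1$, or a literal on a variable outside $X$, the scope contains no variable of $X$, so $p_v = 0$ and $v^0 := v$. For an input labelled by a literal on $x \in X$ we set $v^1 := v$, $v^0 := 0$ if the literal is $x$, and $v^0 := v$, $v^1 := 0$ if it is $\neg x$. For a $\vee$-node $v$ with inputs $w_1, \dots, w_k$, smoothness gives $\var{w_i} = \var{v}$, hence $p_{w_i} = p_v$, and we set $v^j := \bigvee_{i} w_i^j$. This correctly computes the layer because $f_v = \bigcup_i f_{w_i}$, and it stays deterministic because the $f_{w_i}$ are pairwise disjoint (as $v$ is deterministic), so their subsets $f_{w_i^j}$ are too.

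The crux is the $\wedge$-case, where a naive combination over all ways of writing $j = j_1 + \dots + j_k$ would blow up exponentially in the in-degree $k$. I avoid this by a sequential \emph{convolution}. Given a $\wedge$-node with inputs $w_1, \dots, w_k$, which have pairwise disjoint scopes by decomposability, I fold left, maintaining a partial product $P_i$ of $w_1, \dots, w_i$ together with its layers $P_i^0, \dots, P_i^{s_i}$ where $s_i = \sum_{\ell \le i} p_{w_\ell}$: set $P_1 := w_1$, and for $i \ge 2$ set $P_i^j := \bigvee_{a + b = j} \big(P_{i-1}^a \wedge w_i^b\big)$; finally $v^j := P_k^j$. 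Each conjunction $P_{i-1}^a \wedge w_i^b$ is decomposable, since the scope of $P_{i-1}$ is disjoint from $\var{w_i}$; and each disjunction $\bigvee_{a+b=j}$ is deterministic, because for a fixed accepted $\tau$ the value $a = \sum_{x \in X \cap \var{P_{i-1}}} \tau(x)$ is determined, so at most one term accepts $\tau$. Each fold step introduces at most $(p+1)^2$ conjunction nodes and $O(p^2)$ edges, so a $\wedge$-node of in-degree $k$ costs $O(k\,p^2)$; summing the in-degrees over the whole circuit gives total size and construction time $O(|C|\,p^2) = \poly(|C|)$, and the layers at $\out$ yield the desired $r_0, \dots, r_p$.

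The main obstacle is precisely this $\wedge$-case: one must perform the counting convolution so as to stay polynomial in the in-degree while simultaneously preserving decomposability (guaranteed by the disjoint scopes) and determinism (guaranteed because the partial count is read off from the scope of the already-processed factors). Smoothing is what lets the $\vee$-case and the top-level free variables of $X$ be handled without any extra gadgets.
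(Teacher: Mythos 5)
Your proof is correct and follows essentially the same route as the paper: smooth the circuit, make $p+1$ layered copies of each node, pass layers through $\vee$-nodes directly (using smoothness), and combine layers at $\wedge$-nodes by a convolution over $a+b=j$ whose determinism holds because the partial count is fixed by the restriction of $\tau$ to the already-processed scope. The only cosmetic difference is that the paper first normalizes $\wedge$-nodes to fan-in two and then applies the binary convolution, whereas you binarize on the fly via a left fold; these yield the same construction and the same $O(|C|\,p^2)$ bound.
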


\begin{proof}
  Recall that each node $v$ of $C$ computes a Boolean function $f_v$ on $\var{v}$ inductively defined in \cref{sec:preliminary}. The main idea of the proof is to construct $C'$ by introducing $p+1$ copies $v^{(0)}, \dots, v^{(p)}$ for every node $v$ of $C$ and plug them together so that $f_{v^{(i)}}$ computes $f_v \cap B_i(X,\var{v})$. 
  Observe that if $|\var{v} \cap X| < p$ then for $|\var{v} \cap X| < j \leq p$, $f_v \cap B_j(X,\var{v}) = \emptyset$, so we do not really need to introduce a copy $v^{(j)}$. However, in order to minimize the number of  cases to consider in the induction, we  introduce $p$ copies for each $v$, regardless of $|\var{v} \cap X|$.

  To make the transformation easier to perform, we first assume that $C$ has been normalized so that for every $\vee$-node $v$ and input $w$ of $v$, we have $\var{v}=\var{w}$ (a property known as \emph{smoothness}~\cite{DarwicheM2002}) and such that every $\wedge$-node has at most two inputs. It is known that one can normalize $C$ in polynomial time (see \cite[Theorem 1.61]{CapelliPhD} for details).
  
  We now construct from $C$ a new circuit $C'$ of size at most $3p^2|C|$ that will have the following property: for each node $v$ of $C$, there exists $v^{(0)}, \dots, v^{(p)}$ in $C'$ such that $v^{(i)}$ computes $f_v \cap B_i(X,\var{v})$.  The construction of $C'$ is done by induction on the size of $C$.

  We start with the base case where $C$ contains exactly one node, which is then necessarely an input $v$. If $v$ is labeled by $0$, $1$, $y$ or $\neg y$ with $y \notin X$ or by $\neg x$ with $x \in X$ then we add an input node $v^{(0)}$ with the same label as $v$ and introduce 
  $0$-input $v^{(1)}, \dots, v^{(p)}$. 
  If $v$ is an input node of $C$ labeled by $x \in X$, we introduce an input labeled by $x$ as $v^{(1)}$ and let $v^{(0)}, v^{(2)}, \dots, v^{(p)}$ be $0$-inputs. 
  It can readily be verified that for every previous case, that is, for every case where $v$ is an input, then $f_{v} \cap B_i(X,\var{v^{(i)}})$ is the same as $f_{v^{(i)}}$ for every $i \in [n]$. Moreover, $|C'|\leq p = 3p^2|C|$ in this case, which respects the induction hypothesis.

  We now proceed to the inductive step. Let $v$ be node of $C$ that has outdegree $0$ and let $C_0$ be the circuit obtained by removing $v$ from $C$. By induction, we can construct a circuit $C'_0$ of size at most $3p^2|C_0|=3p^2|C|-3p^2$, such that for every node
  $w$ of $C_0$ -- that is for every node of $C$ that is different from $v$, there exists $w^{(0)}, \dots, w^{(p)}$ in $C_0'$ such that $w^{(i)}$ computes $f_w \cap B_i(X,\var{w})$. Observe that $w$ computes the same function in $C_0$ or in $C$ since we have removed a gate of outdegree $0$ that is hence not used in the computation of any other $f_w$. Hence, to construct $C'$, it is enough to add new vertices  $v^{(0)}, \dots, v^{(p)}$ in $C_0'$ such that $v^{(i)}$ computes $f_v \cap B_i(X, \var{v})$. We now explain how to do it by adding at most $3p^2$ nodes in $C_0'$, resulting in a circuit $C'$ that will respect the induction hypothesis. 

  We start with the case where $v$ is a $\vee$-node. Let $w_1, \dots, w_k$ be the inputs of $v$. For every $i \in [p]$, we create a new $\vee$-node $v^{(i)}$  and add it to $C_0'$  and connect it to the nodes $w_j^{(i)}$ of $C_0'$ for every $j \leq k$, where $w_j^{(i)}$ is the node of $C_0'$ computing $f_{w_j} \cap B_i(X,\var{w_j})$ given by induction. 
  
  We claim that $v^{(i)}$ computes $f_v \cap B_i(X,\var{v})$. Indeed, since $C$ is smooth, $\var{w_j} = \var{v}$. Hence we have: $f_{v^{(i)}} = \bigcup_{j=1}^k f_{w_j^{(i)}} = \bigcup_{j=1}^k (f_{w_j} \cap B_i(X,\var{v})) = f_v \cap B_i(X,\var{v})$, which is the induction hypothesis. Moreover $C'$ is  obtained from $C_0'$ by adding at most $p$ new gates, hence $|C'|=|C_0'|+p \leq 3p^2(|C|-1)+p \leq 3p^2|C|$.

  We now deal with the case where  $v$ is a $\wedge$-node of $C$ with inputs $w_1, w_2$ (recall that $C$ has been normalized so that $\wedge$-nodes have two inputs). We construct $C'$ by adding, for every $i \leq p$, a new $\vee$-node $v^{(i)}$ in $C_0'$. Moreover, for every $(a,b) \in [p]^2$ such that $a+b=i$, we add a new node $\wedge$-node $v_{a,b}$.
  
  Let $w_1^{(a)}$ and $w_2^{(b)}$ be the nodes in $C_0'$ computing $f_{w_1} \cap B_a(X,\var{w})$ and $f_{w_2} \cap B_b(X,\var{w})$ respectively. We add the edge  $w_1^{(a)} \rightarrow v_{a,b}$ and $w_2^{(b)} \rightarrow v_{a,b}$ in $C_0'$, that is, $v_{a,b}$ has now $w_1^{(a)}$ and $w_2^{(b)}$ as inputs. Moreover, for every $a,b$ such that $a+b=i$, we add the edge $v_{a,b} \rightarrow v^{(i)}$, that is, the input of $v^{(i)}$ are the set of nodes $v_{a,b}$ such that $a+b=i$. We claim that $v^{(i)}$ computes $f_v \cap B_i(X,\var{v})$.
  
  It is readily verified that $f_{v^{(i)}} =\bigcup_{(a,b) \mid a+b=i} f_{w_1^{(a)}} \times f_{w_2^{(b)}}$. 
  By induction, this is equal to $\bigcup_{(a,b) \mid a+b=i} (f_{w_1} \cap B_a(X,\var{w_1})) \times (f_{w_2} \cap B_b(X,\var{w_2})$. 
  Now, it is easy to see that this set is included in $(f_{w_1} \times f_{w_2}) \cap B_i(X,\var{w}) = f_v \cap B_i(X,\var{v})$. To see the other inclusion, consider $\tau \in f_v \cap B_i(X,\var{v})$. By definition, $\tau = \tau_1 \times \tau_2$ with $\tau_1 \in f_{w_1}$ and $\tau_2 \in f_{w_2}$. 
  Moreover, $i = \sum_{x \in X \cap \var{v}} \tau(x) = \sum_{x \in X \cap \var{w_1}} \tau(x) + \sum_{x \in X \cap \var{w_2}} \tau(x)$. Let $a' = \sum_{x \in X \cap \var{w_1}} \tau(x)$ and $b'=\sum_{x \in X \cap \var{w_2}} \tau(x)$. We have $a'+b'=i$ and $\tau_1=\tau|_{\var{w_1}} \in B_{a'}(X,\var{w_1})$ and $\tau_2 = \tau|_{\var{w_2}} \in B_{b'}(X,\var{w_2})$. Hence $\tau = \tau_1\times\tau_2 \in  (f_{w_1} \cap B_{a'}(X,\var{w_1})) \times (f_{w_2} \cap B_{b'}(X,\var{w_2})$ with $a'+b'=i$. This is enough to conclude that $$\bigcup_{(a,b) \mid a+b=i} (f_{w_1} \cap B_a(X,\var{w_1})) \times (f_{w_2} \cap B_b(X,\var{w_2}) = f_v \cap B_i(X,\var{v}).$$ 

  Finally, observe that $C'$ is obtained from $C_0'$ by adding $3$ edges for each $v_{a,b}$ node, that is, at most $3p^2$ edges. Hence $|C'| \leq |C_0'|+3p^2 \leq 3p^2(|C|-1)+3p^2 = 3p^2|C|$, which concludes the induction. 
  
  Now let $C'$ be the circuit obtained from $C$ by doing the previously describe transformation. Let $r = \out[C]$.  Then by construction, there exists  $r^{(i)}$ in $C'$ computing $f_C \cap B_i(X,Z)$ and $C'$ is of size at most $3p^2|C| = \poly(p)|C|$. Moreover, it is straightforward to see that this construction can be down in polynomial time by induction on the circuit which concludes the proof.
\end{proof}

We will be mostly interested in the following corollary:
\begin{corollary}
  \label{cor:hamming-constraints} Let $S \subseteq [p]$ and $C$ be a d-DNNF on variables $Z$ and $X \subseteq Z$ with $|X|=p$. We can construct a d-DNNF $C'$ of size $\poly(|C|)$ in time $\poly(|C|)$ such that $f_{C'}=f_C \cap B_S(X,Z)$ where $B_S(X,Z) = \bigcup_{i \in S} B_i(X,Z)$. 
\end{corollary}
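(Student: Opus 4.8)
The plan is to deduce \cref{cor:hamming-constraints} directly from \cref{thm:hamming-constraints} by combining the nodes $r_0, \dots, r_p$ that the theorem provides. Recall that \cref{thm:hamming-constraints}, applied to $C$ and $X$ with $p = |X|$, yields in polynomial time a d-DNNF $C'_0$ of size $\poly(|C|)$ containing nodes $r_0, \dots, r_p$ with $f_{r_i} = f_C \cap B_i(X,Z)$ for each $i \in [p]$. Since $B_S(X,Z) = \bigcup_{i \in S} B_i(X,Z)$ is a \emph{disjoint} union (the sets $B_i$ partition $\{0,1\}^Z$ according to the number of $X$-variables set to $1$), I would take a new $\vee$-node $r$ whose inputs are exactly the nodes $r_i$ for $i \in S$, and set $\out[{C'}] = r$ in the resulting circuit $C'$.

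The bulk of the proof is then the verification that $C'$ is a genuine d-DNNF computing the correct function. First, correctness of the computed function: by the semantics of a $\vee$-node, $f_r = \bigcup_{i \in S} f_{r_i} = \bigcup_{i \in S} (f_C \cap B_i(X,Z)) = f_C \cap \bigcup_{i \in S} B_i(X,Z) = f_C \cap B_S(X,Z)$, as required. Second, the d-DNNF properties: adding $r$ introduces no new $\wedge$-node, so decomposability is inherited from $C'_0$. For determinism, I must check that the new $\vee$-node $r$ is deterministic. This is where the disjointness of the $B_i$ is essential: if $\tau \in f_r$, then $\tau$ sets exactly $j := \sum_{x \in X \cap Z} \tau(x)$ of the $X$-variables to $1$, and so $\tau$ can satisfy $f_{r_i}$ only for the single index $i = j$. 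Hence at most one input of $r$ accepts $\tau$, which is precisely determinism. The underlying $r_i$ themselves are already deterministic $\vee$-nodes (or other valid nodes) as part of the d-DNNF produced by \cref{thm:hamming-constraints}.

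The main (and only mild) obstacle I anticipate is a bookkeeping point rather than a conceptual one: the nodes $r_i$ from \cref{thm:hamming-constraints} have $\var{r_i} \subseteq \var{r}$, and one should confirm that no smoothness or variable-scope issue arises when gathering them under a single $\vee$-node. Since each $r_i$ computes a subset of $f_C$ restricted only by a constraint on $X$-variables, every $r_i$ has the same variable set (namely $\var{\out[C]}$, or can be smoothed to it), so the union is taken over nodes with identical scopes and no phantom free variables are introduced. Finally, the size and time bounds are immediate: $C'$ is obtained from $C'_0$ by adding one node and at most $p+1$ edges, so $|C'| = \poly(|C|)$ and the whole construction runs in $\poly(|C|)$ time, completing the proof.
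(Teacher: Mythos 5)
Your proposal is correct and matches the paper's own proof exactly: both construct $C'$ by adding a single new $\vee$-node over the nodes $r^{(i)}$ for $i \in S$ given by \cref{thm:hamming-constraints}, with determinism following from the disjointness of the sets $B_i(X,Z)$. Your write-up simply spells out the verification that the paper leaves as ``readily verified.''
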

\begin{proof}
  Let $C_0$ be the circuit given by \cref{thm:hamming-constraints} and define $C'$ by adding a new $\vee$-node $r$ that is connected to $r^{(i)}$ for every $i \in S$ in $C_0$. It is readily verified that $f_r = f_C \cap B_S(X,Z)$. 
\end{proof}

Now \cref{cor:hamming-constraints} allows to solve any \BOP{} $(f,w)$ with an additional constraint $\sum_{x \in X} \tau(x) \in S$ for $S \subseteq [p]$ and $X \subseteq Z$ in polynomial time if $f$ is given as a d-DNNF and has variable set $Z$. Indeed, this optimization problem can be reformulated as the \BOP{} problem  $(f \cap B_S(X,Z), w)$. By \cref{cor:hamming-constraints}, $f \cap B_S(X,Z)$ can be represented by a d-DNNF of size $\poly(|C|)$ that we can use to compute $w(f \cap B_S(X))$ in $\poly(|C|)$ by \cref{thm:maxplus_amc}. 
Hence, the previous discussion and \cref{thm:bpo-tw-dnnf,thm:bpo-beta-dnnf} imply:
\begin{theorem}
  Let $H=(V,E)$ be a hypergraph, $p$ a profit function and $S \subseteq V$. We can solve the optimization problem
  \begin{align*}
  \max_{x \in \{0,1\}^V} & P_{(H,p)}(x) \\ \text{s.t. } & \sum_{x \in V} x(v) \in S
  \end{align*}
in time $\poly(|V|,|E|)$ if $H$ is $\beta$-acyclic and in time $2^{O(t)}\poly(|V|,|E|)$ where $t$ is the incidence treewidth of $H$.
\end{theorem}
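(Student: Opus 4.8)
The plan is to chain together the four ingredients developed above into a single pipeline: translate the constrained BPO into a constrained \BOP, compile the underlying Boolean function into a d-DNNF, enforce the cardinality constraint directly on the circuit, and finally extract an optimal solution from the resulting circuit.

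First I would observe that, via \cref{thm:bpotobop}, the unconstrained problem $\max_x P_{(H,p)}(x)$ is equivalent to the \BOP{} $(f_H, w_p)$ through the bijection $\tau \mapsto x_\tau$ with $x_\tau(v) = \tau(x_v)$. The key point is that this correspondence also carries the cardinality constraint cleanly: since $\sum_{v \in V} x_\tau(v) = \sum_{x \in X} \tau(x)$ where $X = \{x_v\}_{v \in V}$, a point $x_\tau$ is feasible for the constrained BPO if and only if $\tau \in f_H \cap B_S(X, X \cup Y)$, with $Y = \{y_e\}_{e \in E}$. Hence the constrained BPO is equivalent to the constrained \BOP{} $(f_H \cap B_S(X, X \cup Y), w_p)$, and the same value-preserving bijection argument used in \cref{thm:bpotobop}, now restricted to the feasible subset, shows that an optimal solution of the latter maps to an optimal solution of the former.

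Next I would compile $f_H$ into a d-DNNF $C$. By \cref{thm:bpo-beta-dnnf}, if $H$ is $\beta$-acyclic this can be done with $|C| = \poly(|V|,|E|)$ in time $\poly(|V|,|E|)$; by \cref{thm:bpo-tw-dnnf}, if $\itw{H} = t$ this can be done with $|C| = 2^{O(t)}\poly(|V|,|E|)$ in the same time. Applying \cref{cor:hamming-constraints} with $Z = X \cup Y$ then yields a d-DNNF $C'$ of size $\poly(|C|)$, computable in time $\poly(|C|)$, with $f_{C'} = f_H \cap B_S(X, Z)$. Finally, \cref{thm:maxplus_amc} produces an optimal solution $\tau^*$ of the \BOP{} $(f_{C'}, w_p)$ with $O(\poly(|C'|))$ arithmetic operations on numbers of size $\poly(\size(w_p))$, and returning $x_{\tau^*}$ solves the constrained BPO. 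Composing the size bounds gives the claimed running times of $\poly(|V|,|E|)$ and $2^{O(t)}\poly(|V|,|E|)$ respectively.

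I do not expect a genuine obstacle here, since the statement is essentially an assembly of earlier results; the one point that requires care is verifying that the Hamming-weight set $B_S(X,Z)$ of \cref{cor:hamming-constraints}, which counts the variables of $X$ set to $1$, matches the cardinality constraint of the original problem. This hinges on the fact that $w_p$ assigns weight $0$ to every vertex variable $x_v$, so restricting the admissible assignments by their weight on $X$ alters only the feasible region and not the objective; the objective continues to depend solely on the edge variables $y_e$, exactly as in the unconstrained reduction of \cref{thm:bpotobop}.
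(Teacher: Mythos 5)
Your proposal is correct and follows the paper's proof essentially verbatim: compile $f_H$ into a d-DNNF via \cref{thm:bpo-beta-dnnf} or \cref{thm:bpo-tw-dnnf}, intersect with $B_S(X, X\cup Y)$ via \cref{cor:hamming-constraints}, optimize on the resulting circuit via \cref{thm:maxplus_amc}, and transfer the answer back through the weight-preserving correspondence of \cref{lem:bpotobop}. Your closing remark about why restricting by Hamming weight on $X$ matches the original cardinality constraint is exactly the point the paper's proof also verifies.
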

\begin{proof}
  We explain it for the $\beta$-acyclic case, the other case being completely symmetric. By \cref{thm:bpo-beta-dnnf}, there is a d-DNNF $C$ computing $f_H$ of size $\poly(|V|,|E|)$. Recall that $f_H$ is defined on variables $X = (x_v)_{v \in V}$ and $Y = (y_e)_{e \in E}$. By \cref{cor:hamming-constraints}, there is a d-DNNF $C'$ computing $f_H \cap B_S(X,X \cup Y)$, that is, the set of $\tau \in f_H$ such that $\sum_{v \in V} \tau(x_v) \in S$. By~\cref{lem:bpotobop}, $f_H \cap B_S(X,X \cup Y)$  is isomorphic to $\mathcal{B}=\{\sigma \in \{0,1\}^V \mid \sum_{v\in V} \sigma(v) \in S\}$ and this isomorphism preserves the weights. Hence $w_p(f_H \cap B_S(X,X \cup Y))$ is the same as $\max_{\sigma \in \mathcal{B}} P_{(H,p)}(\sigma)$, which is the value we are looking for.  Hence we can compute the desired optimal value in $\poly(|C'|)$, that is, $\poly(|V|,|E|)$.
\end{proof}

In particular, BPO problems with cardinality or modulo constraints are tractable on $\beta$-acyclic hypergraphs and bounded incidence treewidth hypergraphs since a conjunction of such constraints can easily be encoded as a single extended cardinality constraints. We insist here on the fact that the modulo and cardinality constraint have to be over every variable of the polynomial, or the structure of the resulting hypergraph would be affected. 

Finally, we observe that the construction from \cref{thm:hamming-constraints} can be generalized to solve \BOP{} problem with one knapsack constraint. A knapsack constraint is a constraint of the form $L \leq \sum_{v \in V} c_v x(v) \leq U$ where $L, U, c_v \in \Z$. The construction is more expensive than the previous one as it will provide an algorithm whose complexity is polynomial in $M = |V| \cdot \max_{v \in V}|c_v|$, which is not polynomial in the input size if the integer values $c_v$ are assumed to be binary encoded. Indeed, we know that for any $x \in \{0,1\}^V$, $\sum_{v \in V} c_v x(v)$ will be between $-M$ and $M$. Hence, we can modify the construction of \cref{thm:hamming-constraints} so that $C'$ has a node $r^{(i)}$ for every $i \in [-M;M]$ and such that $f_{r^{(i)}} = f_C \cap \{\tau \in \{0,1\}^V \mid \sum c_vx(v) = i \}$. 

\subsection{Top-$k$ for BPO}
\label{sec:top-k-bpo}

We now turn our attention to the problem of computing $k$ best solutions for a given $k$. 
This task is natural when there is more than one optimal solution and one wants to explore them in order to find a solution that is more suitable to the user's needs than the 
one found by the solver. Formally, a top-$k$ set for a BPO instance $(H,p)$ is a set $K = \{\sigma_1, \dots, \sigma_k\} \subseteq \{0,1\}^V$ of size $k$ such that $P_{(H,p)}(\sigma_1) \geq \dots \geq P_{(H,p)}(\sigma_k)$ and such that for every $\sigma \notin K$, $P_{(H,p)}(\sigma) \leq P_{(H,p)}(\sigma_k)$. 
We observe that a top-$k$ set for a BPO instance may not be unique, as two distinct $\sigma, \sigma'$ may have the same value under $P_{(H,p)}$. The top-$k$ problem asks for outputting a top-$k$ set given a BPO instance $(H,p)$ and $k \in \mathbb{N}$ on input. 

We similarly define the top-$k$ problem for a \BOP{} $(f,w)$. It turns out that finding a top-$k$ set for $f$ when $f$ is given as a d-DNNF $C$ is tractable. The proof is very similar to the one of \cref{thm:maxplus_amc} but instead of constructing an optimal solution $\tau_v^*$ of $f_v$ for each gate $v$, we construct a top-$k$ set $K_v$ for $f_v$. We can build such top-$k$ sets in a bottom up fashion by observing that if the circuit has been normalized as in \cref{thm:hamming-constraints}, we have that if $v$ is a $\vee$-gate with input $v_1,\dots,v_p$, then a top-$k$ set of $f_v$ can be found by taking a top-$k$ set of $\bigcup_{i=1}^p K_{v_i}$, where $K_{v_i}$ is an top-$k$ set of $f_{v_i}$ constructed inductively. Similarly, if $v$ is a $\wedge$-gate with input $v_1,v_2$, a top-$k$ set for $f_v$ can be found by taking a top-$k$ set of $K_{v_1} \times K_{v_2}$. The following summarizes the above discussion and can be found in~\cite{bourhis20topk}:

\begin{theorem}[Reformulated from \cite{bourhis20topk}]
\label{thm:topkdnnf}Given a d-DNNF $C$ on variables $Z$ and a weight function $w$ on $Z$, one can compute a top-$k$ set for the \BOP{} $(f_C,w)$ in time $O(|C| \cdot k \log k)$.
\end{theorem}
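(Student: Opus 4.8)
The plan is to mimic the dynamic-programming scheme of \cref{thm:maxplus_amc}, but to carry at each node a full top-$k$ set rather than a single optimum. Concretely, for each node $v$ of $C$ I would compute a list $K_v$ that is a top-$k$ set of the \BOP{} $(f_v, w|_{\var{v}})$, stored in nonincreasing order of weight. As in \cref{thm:hamming-constraints}, I would first normalize $C$ so that it is smooth (every input $w$ of a $\vee$-node $v$ satisfies $\var{w}=\var{v}$) and so that every $\wedge$-node has exactly two inputs; this keeps the set identities $f_v = \biguplus_{w\in\inputs} f_w$ at $\vee$-nodes and $f_v = f_{w_1}\times f_{w_2}$ at $\wedge$-nodes in their cleanest form and, crucially, guarantees that at a $\vee$-node all assignments involved range over the same variable set, so that their weights are directly comparable.

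The base case and the inductive rules I would use are as follows. At an input labeled by a literal, $f_v$ is a singleton, which is its own top-$k$ set. At a $\vee$-node $v$ with inputs $v_1,\dots,v_p$, determinism makes the union $\biguplus_i f_{v_i}$ disjoint, so $K_v$ is obtained as a top-$k$ set of $\bigcup_i K_{v_i}$; determinism is exactly what guarantees that the resulting list contains $k$ \emph{distinct} assignments, since each assignment of $f_v$ lies in a single $f_{v_i}$. At a $\wedge$-node $v$ with inputs $v_1,v_2$, decomposability gives $\var{v_1}\cap\var{v_2}=\emptyset$, so the weight is additive, $w(\tau_1\times\tau_2)=w(\tau_1)+w(\tau_2)$, and $K_v$ is obtained as a top-$k$ set of $K_{v_1}\times K_{v_2}$.

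The two correctness facts I would isolate as lemmas. First, for a disjoint union, any assignment lying in the global top-$k$ must also lie in the top-$k$ of the part that contains it, so restricting each child to its own list $K_{v_i}$ loses nothing. Second, for a product with additive weights, any pair $(\tau_1,\tau_2)$ in the top-$k$ of $f_{v_1}\times f_{v_2}$ must have $\tau_1$ in the top-$k$ of $f_{v_1}$ and $\tau_2$ in the top-$k$ of $f_{v_2}$: if, say, $\tau_1$ were beaten by $k$ elements of $f_{v_1}$, pairing each of them with $\tau_2$ would exhibit $k$ elements of the product weakly heavier than $(\tau_1,\tau_2)$, contradicting membership in the top-$k$. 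An induction on the structure of $C$ then shows $K_{\out}$ is a top-$k$ set of $f_C$.

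For the running time I would argue per node, keeping each $K_v$ sorted. At a $\vee$-node I would fold the children in one at a time, merging a sorted length-$k$ list into the running top-$k$ by a linear merge truncated at $k$ elements, for $O(k)$ per incoming edge. At a $\wedge$-node the task is exactly \emph{top-$k$ sums of two sorted arrays}, which a max-heap solves in $O(k\log k)$ while touching only two edges. Summing over all nodes gives $O(|C|\cdot k\log k)$. The main obstacle is precisely this last $\wedge$-node step: a naive top-$k$ of the Cartesian product $K_{v_1}\times K_{v_2}$ costs $O(k^2)$, so obtaining the claimed bound requires the heap-based incremental enumeration of product sums, together with a tie-breaking convention making the ``top-$k$'' notion consistent across all the local computations and compatible with the global one; I would fix a single total preorder on assignments (weight first, then a fixed lexicographic tiebreak on the values) and use it uniformly, so that the product comparisons respect the per-coordinate comparisons.
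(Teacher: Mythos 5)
Your proposal is correct and follows essentially the same route as the paper, which only sketches the argument (a bottom-up dynamic program over the normalized circuit, taking a top-$k$ of $\bigcup_i K_{v_i}$ at deterministic $\vee$-nodes and of $K_{v_1}\times K_{v_2}$ at decomposable $\wedge$-nodes) and defers the details to the cited reference. You additionally supply the pieces the paper leaves implicit — the two exchange lemmas justifying that child lists can be truncated to size $k$, and the heap-based top-$k$-sums enumeration needed to avoid the naive $O(k^2)$ cost at $\wedge$-nodes and actually reach the stated $O(|C|\cdot k\log k)$ bound — which is a faithful completion rather than a different approach.
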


Using the weight preserving isomorphism from \cref{lem:bpotobop}, it is clear that if $K=\{\tau_1, \dots, \tau_k\}$ is a top-$k$ set for $(f_H,w_p)$, then $L=\{x_{\tau_1}, \dots, x_{\tau_k}\}$ is a top-$k$ set for $\BPO$. Hence, by \cref{thm:kctw,thm:kcbeta} together with \cref{thm:topkdnnf}:

\begin{theorem}
  Let $H=(V,E)$ be a hypergraph and $p$ a profit function.  We can compute a top-$k$ set for the BPO instance $(H,p)$ in time $\poly(k,|V|,|E|)$ if $H$ is $\beta$-acyclic and in time $2^{O(t)}\poly(k,|V|,|E|)$ if $H$ has incidence treewidth $t$.
\end{theorem}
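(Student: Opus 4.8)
The plan is to reduce the top-$k$ problem for $\BPO$ to the top-$k$ problem for the associated \BOP{} $(f_H, w_p)$, and then solve the latter directly on a d-DNNF representation. First I would compile the multilinear set $f_H$ into a d-DNNF $C$: in the $\beta$-acyclic case \cref{thm:bpo-beta-dnnf} provides such a $C$ of size $\poly(|V|,|E|)$, and in the bounded incidence treewidth case \cref{thm:bpo-tw-dnnf} provides one of size $2^{O(t)}\poly(|V|,|E|)$, both within the stated time bounds. Since $f_C = f_H$, I can then invoke \cref{thm:topkdnnf} on the pair $(f_C, w_p)$ to obtain a top-$k$ set $K = \{\tau_1, \dots, \tau_k\}$ for the \BOP{} in time $O(|C| \cdot k \log k)$.

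The second step is to transport $K$ back to the BPO instance through the correspondence $\tau \mapsto x_\tau$. The two facts I need are already established: as shown in the proof of \cref{thm:bpotobop}, the map $\tau \mapsto x_\tau$ is a bijection between $f_H$ and $\{0,1\}^V$, and by \cref{lem:bpotobop} it is value-preserving, i.e.\ $P_{(H,p)}(x_\tau) = w_p(\tau)$ for every $\tau \in f_H$. Consequently the set $L = \{x_{\tau_1}, \dots, x_{\tau_k}\}$ inherits exactly the defining properties of a top-$k$ set: the chain $w_p(\tau_1) \geq \dots \geq w_p(\tau_k)$ becomes $P_{(H,p)}(x_{\tau_1}) \geq \dots \geq P_{(H,p)}(x_{\tau_k})$, and for any $\sigma \notin L$ its unique preimage $\tau \notin K$ satisfies $w_p(\tau) \leq w_p(\tau_k)$, whence $P_{(H,p)}(\sigma) \leq P_{(H,p)}(x_{\tau_k})$. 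Because the map is a bijection, no feasible point of $\BPO$ is missed and no spurious point is introduced, so $L$ is a genuine top-$k$ set of size $k$.

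Combining the two steps, the running time is dominated by the construction of $C$ and by the call to \cref{thm:topkdnnf}, yielding $\poly(k,|V|,|E|)$ in the $\beta$-acyclic case and $2^{O(t)}\poly(k,|V|,|E|)$ in the bounded incidence treewidth case, as claimed.

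I expect the argument to be essentially routine, as it is a three-way composition of results already in hand. The only point that merits care is verifying that $\tau \mapsto x_\tau$ preserves not merely the optimal value but the full top-$k$ structure, including the threshold condition governing points \emph{outside} the set; this follows immediately from bijectivity together with value preservation, so I do not anticipate a genuine obstacle.
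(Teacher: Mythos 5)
Your proposal is correct and follows essentially the same route as the paper: compile $f_H$ into a d-DNNF via \cref{thm:bpo-beta-dnnf} or \cref{thm:bpo-tw-dnnf}, apply \cref{thm:topkdnnf} to get a top-$k$ set for $(f_H,w_p)$, and transport it back through the weight-preserving bijection $\tau \mapsto x_\tau$ of \cref{lem:bpotobop}. The paper states this transfer in one line, while you spell out the verification of the threshold condition for points outside the set, but the argument is the same.
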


\subsection{Solving Binary Polynomial Optimization with Literals}\label{sec:pseudo-bool}


The relation between BPO and \BOP{} from \cref{sec:bpokc} can naturally be extended to Binary Polynomial Optimization with Literals, abbreviated \BPOL{}. In \BPOL{}, the goal is to maximize a polynomial over binary values given as a sum of monomials over literals, that is, each monomial is of the form $p(e) \prod_{v \in e} \sigma_e(v)$ where $\sigma_e(v) \in \{x(v), 1-x(v)\}$. An instance of \BPOL{} is completely characterized by $(H,p,\sigma)$ where $H=(V,E)$ is a multihypergraph --- that is, a hypergraph where $E$ is a multiset, $p \from E \to \Q$ is a profit function, and $\sigma=(\sigma_e)_{e\in E}$ is a family of mappings for $e \in E$ that associates $v \in e$ to an element in $\{x(v), 1-x(v)\}$. We denote by $P_\calH$ the polynomial $\sum_{e \in E} p(e)\prod_{v \in e} \sigma_e(v)$.

Given an instance of \BPOL{} $\calH=(H,\sigma,p)$, we define the Boolean function $f_\calH$ on variables $X=(x_v)_{v \in V}$ and $Y=(y_e)_{e \in E}$ as follows: $\tau \in \{0,1\}^{X \cup Y}$ is in $f_\calH$ if and only if $\tau(y_e) = \prod_{v \in e, \sigma_e(v)=x(v)} \tau(x_v)\prod_{v \in e, \sigma_e(v)=1-x(v)} 1-\tau(x_v)$. As before, we can naturally associate a \BOP{} instance to $\calH$. Indeed, we define $w_\calH$ as before to be the weighting function defined as $w_\calH(x_v) = 0$ for every $v \in V$ and $w_\calH(y_e)=p(e)$ for every $e \in E$. Again, there is a weight preserving isomorphism between $f_\calH$ and $\{0,1\}^V$, mirroring \cref{lem:bpotobop} and \cref{thm:bpotobop}:

\begin{lemma}
  \label{lem:pB:bpotobop}
    For every $\tau \in f_\calH$, we have $P_{\calH}(x_\tau) = w_p(\tau)$ where the mapping $x_\tau \in \{0,1\}^V$ is defined as $x_\tau(v) = \tau(x_v)$ for every $v \in V$.
\end{lemma}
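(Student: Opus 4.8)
The plan is to mirror the proof of \cref{lem:bpotobop} almost verbatim, since the statement of \cref{lem:pB:bpotobop} is the exact analogue for the literal setting. The key difference is that the product defining $\tau(y_e)$ now ranges over literals rather than plain variables, but the structure of the computation is identical. First I would expand $w_p(\tau)$ (which, in the notation of this subsection, is $w_\calH(\tau)$) using its definition as a sum over $X \cup Y$, namely $w_\calH(\tau) = \sum_{v \in V} w_\calH(x_v, \tau(x_v)) + \sum_{e \in E} w_\calH(y_e, \tau(y_e))$.

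Next I would simplify each of the two sums. The first sum vanishes because $w_\calH(x_v, b) = 0$ for every $v$ and $b$, exactly as in the original lemma. The second sum becomes $\sum_{e \in E} \tau(y_e)\, p(e)$ by the definition of the weight on the $y_e$ variables. At this point the crucial step is to invoke the defining condition of $f_\calH$: since $\tau \in f_\calH$, we have $\tau(y_e) = \prod_{v \in e,\, \sigma_e(v)=x(v)} \tau(x_v) \prod_{v \in e,\, \sigma_e(v)=1-x(v)} (1-\tau(x_v))$. Substituting this in turns the sum into $\sum_{e \in E} p(e) \prod_{v \in e,\, \sigma_e(v)=x(v)} \tau(x_v) \prod_{v \in e,\, \sigma_e(v)=1-x(v)} (1-\tau(x_v))$.

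Finally, I would rewrite this using the substitution $x_\tau(v) = \tau(x_v)$, which turns $\tau(x_v)$ into $x_\tau(v)$ and $1-\tau(x_v)$ into $1-x_\tau(v)$, so that each factor matches $\sigma_e(v)$ evaluated at $x_\tau$. Recognizing that $\prod_{v \in e} \sigma_e(v)$ evaluated at $x_\tau$ is exactly the product $\prod_{v \in e,\, \sigma_e(v)=x(v)} x_\tau(v) \prod_{v \in e,\, \sigma_e(v)=1-x(v)} (1-x_\tau(v))$, the whole expression collapses to $\sum_{e \in E} p(e) \prod_{v \in e} \sigma_e(v) = P_\calH(x_\tau)$, which is the claim.

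I do not expect any genuine obstacle here: this is a routine definitional unrolling identical in spirit to \cref{lem:bpotobop}, and the only bookkeeping subtlety is being careful that the case split on whether $\sigma_e(v)$ is $x(v)$ or $1-x(v)$ is handled consistently between the definition of $f_\calH$ and the definition of $P_\calH$. The mild care needed is purely notational — making sure the two products over the partition of $e$ according to the polarity of $\sigma_e$ line up correctly — rather than mathematical.
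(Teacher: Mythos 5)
Your proof is correct and follows exactly the route the paper intends: the paper omits an explicit proof of \cref{lem:pB:bpotobop}, stating only that it mirrors \cref{lem:bpotobop}, and your argument is precisely that mirroring — expand $w_\calH(\tau)$, drop the zero-weight $x_v$ terms, substitute the defining product for $\tau(y_e)$ from $f_\calH$, and identify the result with $P_\calH(x_\tau)$ via $x_\tau(v)=\tau(x_v)$. No gaps.
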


\begin{theorem}
  \label{thm:pB:bpotobop}
  For every instance $\calH=(H,\sigma,p)$ of \BPOL{}, the set of optimal solutions of $\calH$ is equal to $\{x_{\tau^*} \mid \tau^* \text{ is an optimal solution of $(f_\calH,w_p)$}\}$.
\end{theorem}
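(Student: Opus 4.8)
The plan is to follow the exact same structure as the proof of \cref{thm:bpotobop}, since \cref{lem:pB:bpotobop} gives us the weight-preserving correspondence that plays the role \cref{lem:bpotobop} played in the variable case. The essential observation is that the only new ingredient here is the presence of literals $\sigma_e(v)$, but this complexity is entirely absorbed into the definition of $f_\calH$ and \cref{lem:pB:bpotobop}; once we have that lemma in hand, the combinatorial argument connecting optimal solutions is identical.

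First I would establish the bijection underlying the correspondence: given any $x \in \{0,1\}^V$, there is a unique $\tau \in f_\calH$ with $x = x_\tau$. The uniqueness of $\tau(x_v)$ is forced by $\tau(x_v)=x(v)$, and the value $\tau(y_e)$ is then \emph{determined} by the defining constraint of $f_\calH$, namely $\tau(y_e)=\prod_{v \in e,\ \sigma_e(v)=x(v)} \tau(x_v)\prod_{v \in e,\ \sigma_e(v)=1-x(v)}(1-\tau(x_v))$. One checks that this $\tau$ indeed lies in $f_\calH$ and satisfies $x=x_\tau$. This gives a bijection between $\{0,1\}^V$ and $f_\calH$.

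Next I would prove the two inclusions establishing set equality. For one direction, let $x^*$ be an optimal solution of $\calH$ and let $\tau$ be the unique element of $f_\calH$ with $x^*=x_\tau$. For any $\tau' \in f_\calH$, \cref{lem:pB:bpotobop} gives $w_p(\tau')=P_\calH(x_{\tau'}) \le P_\calH(x^*)=w_p(\tau)$, where the inequality uses optimality of $x^*$ and the final equality is \cref{lem:pB:bpotobop} again; hence $\tau$ is optimal for $(f_\calH,w_p)$. For the reverse direction, let $\tau^*$ be an optimal solution of $(f_\calH,w_p)$; for any $x \in \{0,1\}^V$, writing $x=x_\tau$ with $\tau \in f_\calH$, \cref{lem:pB:bpotobop} gives $P_\calH(x)=w_p(\tau) \le w_p(\tau^*)=P_\calH(x_{\tau^*})$, so $x_{\tau^*}$ is optimal for $\calH$.

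I do not anticipate a genuine obstacle, as this is a verbatim adaptation of \cref{thm:bpotobop}. The one point warranting slightly more care than in the variable case is the well-definedness and uniqueness of $\tau$ in the bijection step, since the formula for $\tau(y_e)$ now splits the product according to whether $\sigma_e(v)$ is $x(v)$ or $1-x(v)$; but this is purely bookkeeping and the product is still uniquely determined by $x$. The statement of the theorem references $(f_\calH,w_p)$, so I would keep that notation consistent with \cref{lem:pB:bpotobop} throughout.
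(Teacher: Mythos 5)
Your proof is correct and follows exactly the route the paper intends: the paper states \cref{thm:pB:bpotobop} without proof precisely because it is the verbatim adaptation of the proof of \cref{thm:bpotobop}, with \cref{lem:pB:bpotobop} playing the role of \cref{lem:bpotobop}, which is what you carry out. Your extra care on the well-definedness of $\tau(y_e)$ in the bijection step is appropriate and matches the definition of $f_\calH$ given in the paper.
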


Now we use this connection and a CNF encoding of $f_\calH$ to compute the optimal value of structured instances of \BPOL{}. As before, $f_\calH$ is equivalently defined as the following Boolean formula: $\bigwedge_{e \in E} y_e \Leftrightarrow \big (\bigwedge_{v \in e, \sigma_e(v)=x(v)} x_v \wedge \bigwedge_{v \in e, \sigma_e(v)=1-x(v)} \neg x_v\big )$ which can be rewritten as $\bigwedge_{e \in E} y_e \Leftrightarrow \bigwedge_{v \in e} \sigma_e(x_v)$ where we abuse the notation $\sigma_e$ by defining $\sigma_e(x_v) = x_v$ if $\sigma_e(v) = x(v)$ and $\neg x_v$ otherwise. Again, $F_\calH$ can be encoded as the conjunction of:
\begin{itemize}
\item $R'_e = y_e \vee \bigvee_{v \in e} \neg \sigma_e(x_v)$ for every $e \in E$ (where we replace $\neg \neg x$ by $x$),
\item $L'_{e,v} = \neg y_e \vee \sigma_e(x_v)$ for every $e \in E$ and $v \in e$.
\end{itemize}

Given a multihypergraph $H=(V,E)$, we define its incidence graph as for hypergraph but we have one vertex for each occurence of $e \in E$ in the multiset $E$. The incidence treewidth $\itw{H}$ of $H$ is defined to be the incidence treewidth of the incidence graph of $H$. The proof of \cref{lem:itwFh} also works for multihypergraphs:

\begin{lemma}
  \label{lem:pB:itwFh} For every instance $\calH=(H,\sigma,p)$ of \BPOL{}, we have that the satisfying assignment of $F_\calH$ are exactly $f_\calH$. Moreover,  $\itw{F_\calH} \leq 2\cdot(1+\itw{H})$.
\end{lemma}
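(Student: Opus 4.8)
The plan is to treat the two assertions separately, each one mirroring the corresponding argument already established for the positive-literal encoding $F_H$.

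For the encoding correctness, I would argue exactly as in \cref{lem:FH_encodes_fH}. The clause $R'_e = y_e \vee \bigvee_{v \in e} \neg\sigma_e(x_v)$ encodes the implication $\bigwedge_{v \in e}\sigma_e(x_v) \Rightarrow y_e$, while each $L'_{e,v} = \neg y_e \vee \sigma_e(x_v)$ encodes $y_e \Rightarrow \sigma_e(x_v)$; taking the conjunction over $v \in e$ yields $y_e \Rightarrow \bigwedge_{v \in e}\sigma_e(x_v)$. Hence an assignment $\tau$ satisfies all clauses associated with $e$ if and only if $\tau(y_e) \Leftrightarrow \bigwedge_{v \in e}\tau(\sigma_e(x_v))$. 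Unfolding the abuse of notation $\sigma_e(x_v)$ and the extension of $\tau$ to literals (so that $\tau(\sigma_e(x_v)) = \tau(x_v)$ when $\sigma_e(v)=x(v)$ and $\tau(\sigma_e(x_v)) = 1-\tau(x_v)$ otherwise), this is precisely the defining membership condition $\tau(y_e) = \prod_{v \in e,\, \sigma_e(v)=x(v)}\tau(x_v)\prod_{v \in e,\, \sigma_e(v)=1-x(v)}(1-\tau(x_v))$ of $f_\calH$. Since $F_\calH$ is the conjunction of these clauses over all $e \in E$, its satisfying assignments are exactly $f_\calH$.

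For the treewidth bound, the key observation is that the incidence graph of a CNF formula depends only on which variable occurs in which clause, and is \emph{insensitive to the polarity} of the literals. In $F_\calH$ the clause $R'_e$ mentions exactly the variables $y_e$ and $\{x_v : v \in e\}$, and each $L'_{e,v}$ mentions exactly $y_e$ and $x_v$ — the same variable-clause incidences as $R_e$ and $L_{e,v}$ in $F_H$. Consequently $\inc[F_\calH]$ is isomorphic to the graph $\inc[F_H]$ built from the underlying multihypergraph, and the entire argument of \cref{lem:itwFh} transfers verbatim: one forms $G'$ from $\inc[H]$ by adjoining, for each occurrence of $e$ in the multiset $E$, a vertex $y_e$ sharing the neighborhood of $e$, bounds $\tw{G'} \leq 2(1+\itw{H})$ via the module-contraction lemma \cref{lem:twmodule}, and recovers $\inc[F_\calH]$ by renaming together with splitting each edge $\{y_e, x_v\}$ by the node $L'_{e,v}$ — an operation preserving treewidth by \cref{lem:twsub}. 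This gives $\itw{F_\calH} \leq \tw{G'} \leq 2(1+\itw{H})$.

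The only point that requires care, and the one I would flag as the mild obstacle, is the multiset structure of $E$: repeated edges of $H$ must be distinguished in the incidence graph of the multihypergraph, so I would introduce one vertex $y_e$ per occurrence of $e$ in $E$. Each such $y_e$ together with its edge-vertex still forms a module in $G'$, so the module-contraction bound applies unchanged and the proof of \cref{lem:itwFh} goes through without modification. I do not anticipate any genuine difficulty beyond this bookkeeping, precisely because the sole structural novelty relative to \cref{lem:itwFh} — the presence of negative literals — is invisible to the incidence graph.
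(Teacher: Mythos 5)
Your proposal is correct and follows exactly the route the paper intends: the paper gives no separate proof of this lemma, stating only that the proof of \cref{lem:itwFh} carries over to multihypergraphs, and your argument fills in precisely those details — the incidence graph ignores literal polarity, and the multiset structure is handled by one $y$-vertex per occurrence, so the module-contraction and edge-subdivision arguments apply unchanged. The correctness half likewise mirrors \cref{lem:FH_encodes_fH} as the paper implicitly assumes.
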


Similarly as before, we need an alternate encoding to handle $\beta$-acyclic instances. If $\prec$ is an order on $V$, we define $F_\calH^\prec$ as the conjunction of:
\begin{itemize}
\item $R'_e = y_e \vee \bigvee_{v \in e} \neg \sigma_e(x_v)$ for every $e \in E$ (where we replace $\neg \neg x$ by $x$),
\item $L'_{e,v,\prec} = \neg y_e \vee \sigma_e(x_v) \vee \bigvee_{w \in e, v \prec w} \neg \sigma_e(x_w)$ for every $e \in E$ and $v \in e$ (where we replace $\neg \neg x$ by $x$).
\end{itemize}

A multihypergraph $H=(V,E)$ is said to be $\beta$-acyclic if it admits a $\beta$-elimination order. One can actually check that this is equivalent to the fact that the hypergraph $H'$ obtained by simply transforming $E$ into a set is $\beta$-acyclic. As before, we have:

\begin{lemma}
  \label{lem:pB:betapreserved}
  Let $\calH=(H,\sigma,p)$ be an instance of \BPOL{} and $\prec$ be a $\beta$-elimination order for $H$. The satisfying assignments of $F_\calH^\prec$ are exactly $f_\calH$ and $F_\calH^\prec$ is $\beta$-acyclic.
\end{lemma}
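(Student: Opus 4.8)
<br>

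The plan is to prove \cref{lem:pB:betapreserved} by mirroring the structure of the proof of \cref{lem:betapreserved}, since the literal-based encoding $F_\calH^\prec$ is designed precisely to be the syntactic analogue of $F_H^\prec$. The statement has two parts: correctness of the encoding and preservation of $\beta$-acyclicity. For the correctness part, I would argue exactly as in the paragraph following \cref{lem:FHprec_encodes_fH}: the clause $R'_e = y_e \vee \bigvee_{v \in e} \neg \sigma_e(x_v)$ encodes the implication $\bigwedge_{v \in e} \sigma_e(x_v) \Rightarrow y_e$, while the family $\bigwedge_{v \in e} L'_{e,v,\prec}$ encodes the reverse implication $y_e \Rightarrow \bigwedge_{v \in e} \sigma_e(x_v)$. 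Taken together these give the equivalence $y_e \iff \bigwedge_{v \in e}\sigma_e(x_v)$, which by definition of $f_\calH$ (with the convention $\sigma_e(x_v) = x_v$ or $\neg x_v$) says exactly $\tau(y_e) = \prod_{v \in e,\sigma_e(v)=x(v)} \tau(x_v)\prod_{v \in e,\sigma_e(v)=1-x(v)}(1-\tau(x_v))$. Hence $\tau$ satisfies $F_\calH^\prec$ iff $\tau \in f_\calH$.

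For the $\beta$-acyclicity part, the key observation is that the underlying hypergraph $H(F_\calH^\prec)$ depends only on the \emph{variable sets} of the clauses, not on the polarity of the literals. Since $\var{R'_e} = \{y_e\} \cup e$ and $\var{L'_{e,v,\prec}} = \{y_e\} \cup \{w \in e \mid v \preceq w\}$, these are exactly the same variable sets as those appearing in the proof of \cref{lem:betapreserved} for $F_H^\prec$ (after identifying each multiset element with its vertex set). Therefore $H(F_\calH^\prec)$ is identical to $H(F_H^\prec)$ as a hypergraph on $X \cup Y$, and I would claim the same elimination order $(y_{e_1},\dots,y_{e_m},x_{v_1},\dots,x_{v_n})$ works. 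The argument then proceeds verbatim: first each $y_e$ is a nest point because $\var{L'_{v,e,\prec}} \subseteq \var{L'_{w,e,\prec}}$ whenever $w \prec v$ and $\var{L'_{v,e,\prec}} \subseteq \var{R'_e}$; after eliminating all $y_e$, the resulting hypergraph $H'$ on $\{x_{v_1},\dots,x_{v_n}\}$ has the property that every edge of $H'$ containing $v_{i+1}$ restricts, after removing $V_i = \{v_1,\dots,v_i\}$, to an edge of $H \setminus V_i$, so nest points of $H \setminus V_i$ transfer to nest points of $H' \setminus V_i$, and $(v_1,\dots,v_n)$ being a $\beta$-elimination order of $H$ yields one for $H'$.

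The one subtlety worth flagging is that $H$ is now a \emph{multihypergraph}, so $E$ is a multiset and distinct edges $e, e'$ may have $e = e'$ as vertex sets. I would handle this exactly as the footnote in \cref{sec: betaacyclic} handles repeated clauses: each occurrence of $e$ in the multiset contributes its own variable $y_e$, but repeated copies form modules in the incidence structure and, more to the point, each $y_e$ is still a nest point and is eliminated independently, so the elimination argument is unaffected by multiplicity. The remark preceding the lemma that a multihypergraph is $\beta$-acyclic iff its underlying set-hypergraph $H'$ is $\beta$-acyclic guarantees that a $\beta$-elimination order $\prec$ of $H$ is well-defined and agrees with one of $H'$, so the inductive comparison between $H'$ (the eliminated circuit-hypergraph) and $H \setminus V_i$ goes through without change.

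The main obstacle I anticipate is purely bookkeeping rather than conceptual: making sure the claim that $H(F_\calH^\prec)$ coincides with $H(F_H^\prec)$ is stated carefully enough to justify reusing the earlier proof, and confirming that the literal negations ($\neg \sigma_e(x_v)$ versus $\neg x_v$) genuinely never affect the hypergraph — which is immediate since $\var{\neg x_v} = \var{x_v} = \{x_v\}$. Once that reduction is made explicit, the rest follows by direct appeal to the already-established \cref{lem:betapreserved}.
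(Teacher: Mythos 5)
Your proposal is correct and follows exactly the route the paper intends: the paper in fact gives no explicit proof of \cref{lem:pB:betapreserved}, introducing it only with ``As before, we have,'' and your argument --- correctness via the same implication reading of $R'_e$ and $\bigwedge_{v\in e} L'_{e,v,\prec}$, plus the observation that literal polarity does not change $\var{\cdot}$ so $H(F_\calH^\prec)$ coincides with $H(F_H^\prec)$ and \cref{lem:betapreserved} applies verbatim --- is precisely the intended filling-in, with the multihypergraph subtlety handled appropriately.
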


Hence \cref{lem:pB:itwFh,lem:pB:betapreserved} combined with \cref{thm:kcbeta,thm:kctw} allows to show that:

\begin{theorem}
  \label{thm:pB:compileBPO} Let $\calH=(H,\sigma,p)$ be an instance of \BPOL{}. If $H$ is $\beta$-acyclic (resp. has incidence treewidth $t$), one can construct a d-DNNF computing $f_\calH$ in time $\poly(|V|,|E|)$ (resp. $2^{O(t)}\poly(|V|,|E|)$) of size $\poly(|V|,|E|)$ (resp. $2^{O(t)}\poly(|V|,|E|)$).
\end{theorem}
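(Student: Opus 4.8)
The plan is to mirror exactly the argument used for \cref{thm:bpo-tw-dnnf,thm:bpo-beta-dnnf} in the variable-only case, substituting the literal-aware encodings and the literal-aware structural lemmas. Since \cref{lem:pB:itwFh,lem:pB:betapreserved} already do all the combinatorial work of bounding the incidence treewidth of $F_\calH$ and establishing $\beta$-acyclicity of $F_\calH^\prec$, the proof is essentially a bookkeeping argument that combines these with the two knowledge-compilation theorems \cref{thm:kcbeta,thm:kctw}.

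For the bounded incidence treewidth case, I would first construct the CNF encoding $F_\calH$ from $\calH$; this is clearly doable in time $\poly(|V|,|E|)$. I would then count its parameters: it has $n=|V|+|E|$ variables (one $x_v$ per vertex and one $y_e$ per edge of the multiset $E$) and at most $m\leq |E|(1+|V|)$ clauses, since each $e\in E$ contributes one clause $R'_e$ and $|e|\leq |V|$ clauses $L'_{e,v}$. By \cref{lem:pB:itwFh}, its incidence treewidth is at most $2(1+t)$, and its satisfying assignments are exactly $f_\calH$. Feeding these bounds into \cref{thm:kctw} yields a d-DNNF of size $2^{O(t)}\poly(|V|,|E|)$ computing $f_\calH$, constructible within the same time bound.

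For the $\beta$-acyclic case, I would instead fix a $\beta$-elimination order $\prec$ of $H$ (which exists and is computable in polynomial time when $H$ is $\beta$-acyclic) and build the encoding $F_\calH^\prec$. By \cref{lem:pB:betapreserved}, this CNF is $\beta$-acyclic and its satisfying assignments are exactly $f_\calH$; it again has $\poly(|V|,|E|)$ variables and clauses. Applying \cref{thm:kcbeta} then produces a d-DNNF of size $\poly(|V|,|E|)$ computing $f_\calH$ in time $\poly(|V|,|E|)$.

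I do not expect a genuine obstacle here, since the structural content has been isolated into the preceding lemmas; the only points requiring care are the routine verifications that the encoding has polynomially many clauses and that the multiset nature of $E$ does not inflate the parameters (it does not, because $|E|$ already counts multiplicities, and the incidence graph of a multihypergraph is defined per occurrence, matching the hypotheses of \cref{lem:pB:itwFh}). If anything, the subtlest step is simply ensuring that the correctness claim ``satisfying assignments of $F_\calH$ (resp.\ $F_\calH^\prec$) are exactly $f_\calH$'' is invoked so that the compiled d-DNNF computes the intended Boolean function rather than merely a structurally similar one; both facts are supplied directly by \cref{lem:pB:itwFh,lem:pB:betapreserved}.
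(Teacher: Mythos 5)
Your proposal is correct and follows exactly the paper's argument: the paper proves this theorem by the one-line observation that \cref{lem:pB:itwFh,lem:pB:betapreserved} combined with \cref{thm:kcbeta,thm:kctw} give the result, which is precisely the combination you spell out (mirroring the proof of \cref{thm:bpo-tw-dnnf}). Your added remarks about clause counts and the multiset convention are consistent with the paper's definitions and do not change the argument.
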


\cref{thm:pB:compileBPO} with the isomorphism from \cref{lem:pB:bpotobop} allows to show the tractability of \BPOL{} on $\beta$-acyclic and bounded incidence treewidth instances. We can actually go one step further and incorporate the techniques from \cref{sec:cardinality-constraints,sec:top-k-bpo} to get the following tractability result, which is a formalized version of the computational complexity result presented in \cref{th general}:

\begin{theorem}
  \label{thm:everything} Let $\calH=(H,\sigma,p)$ with $H=(V,E)$ be an instance of \BPOL{}, $k \in \mathbb{N}$ and $S \subseteq [n]$ where $n=|V|$. We can solve the following optimization problem $(OPT)$:
  \begin{align*}
    \topk_{x \in \{0,1\}^V} P_\calH(x) \\
    \sum_{v \in V} x(v) \in S
  \end{align*}
  in time $\poly(k,|V|,|E|)$ (resp. $2^{O(t)}\poly(k,|V|,|E|)$) if $H$ is $\beta$-acyclic (resp. $H$ has incidence treewidth $t$).
\end{theorem}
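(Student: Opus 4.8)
The plan is to assemble the theorem by composing, in sequence, the three ingredients developed in the preceding subsections, since $(OPT)$ is exactly \BPOL{} augmented with both an extended cardinality constraint and the top-$k$ objective. First I would compile the Boolean function $f_\calH$ associated to the instance $\calH$ into a d-DNNF $C$. This is handed to us directly by \cref{thm:pB:compileBPO}: if $H$ is $\beta$-acyclic we obtain $C$ of size $\poly(|V|,|E|)$ in time $\poly(|V|,|E|)$, and if $H$ has incidence treewidth $t$ we obtain $C$ of size $2^{O(t)}\poly(|V|,|E|)$ in the corresponding time. Recall that $f_\calH$ is defined on the variables $X=(x_v)_{v \in V}$ and $Y=(y_e)_{e\in E}$.

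Next I would impose the cardinality constraint at the level of the circuit. Since $\sum_{v \in V} x(v) \in S$ concerns only the original variables, I would apply \cref{cor:hamming-constraints} with $Z = X \cup Y$ and distinguished set $X$ (so that $|X| = n$), producing a d-DNNF $C'$ of size $\poly(|C|)$ that computes $f_\calH \cap B_S(X, X \cup Y)$, i.e.\ exactly the satisfying assignments $\tau$ of $f_\calH$ with $\sum_{v \in V} \tau(x_v) \in S$. The important point at this step is that \cref{cor:hamming-constraints} returns a genuine d-DNNF, so the next step remains applicable to its output. I would then run the top-$k$ extraction of \cref{thm:topkdnnf} on $C'$ with the weight function $w_p$ coming from the \BPOL{} reduction, obtaining a top-$k$ set $K = \{\tau_1, \dots, \tau_k\}$ for the \BOP{} $(f_{C'}, w_p)$ in time $O(|C'| \cdot k \log k)$.

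Finally I would translate $K$ back to the original space through the weight-preserving bijection $\tau \mapsto x_\tau$ of \cref{lem:pB:bpotobop}, setting $L = \{x_{\tau_1}, \dots, x_{\tau_k}\}$. Because this bijection satisfies $P_\calH(x_\tau) = w_p(\tau)$ and also $\sum_{v \in V} x_\tau(v) = \sum_{v \in V} \tau(x_v)$, it restricts to a weight-preserving bijection between $f_{C'}$ and the feasible region $\{x \in \{0,1\}^V : \sum_{v \in V} x(v) \in S\}$, so that $L$ is a top-$k$ set for $(OPT)$. Adding the three size and time bounds yields $\poly(k,|V|,|E|)$ in the $\beta$-acyclic case and $2^{O(t)}\poly(k,|V|,|E|)$ in the bounded incidence treewidth case, as claimed.

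The main obstacle I expect is not located in any single step, since each is supplied by an earlier result, but rather in verifying that the composition is sound: one must check that restricting the weight-preserving isomorphism of \cref{lem:pB:bpotobop} to the cardinality-constrained subfunction still gives a bijection onto the constrained feasible region that simultaneously preserves the objective value and the feasibility predicate. Once this compatibility is in place, a top-$k$ set for the constrained \BOP{} transfers verbatim to a top-$k$ set for $(OPT)$, and the bounds follow by tracking sizes through the pipeline.
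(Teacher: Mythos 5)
Your proposal is correct and follows essentially the same route as the paper's proof: compile $f_\calH$ to a d-DNNF via \cref{thm:pB:compileBPO}, restrict to $f_\calH \cap B_S(X, X\cup Y)$ via \cref{cor:hamming-constraints}, extract a top-$k$ set via \cref{thm:topkdnnf}, and pull it back through the weight-preserving isomorphism of \cref{lem:pB:bpotobop}. The compatibility check you flag at the end — that the isomorphism preserves both the objective value and the cardinality predicate — is exactly the verification the paper carries out.
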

\begin{proof}
  We give the proof for the $\beta$-acyclic case, the bounded incidence treewidth case is completely symmetrical. We start by constructing a d-DNNF computing $f_\calH$ using \cref{thm:pB:compileBPO} and transform it so that it computes $f'=f_\calH \cap B_S(X, X \cup Y)$ using \cref{cor:hamming-constraints}. We use \cref{thm:topkdnnf} to compute a top-$k$ set $K=\{\tau_1,\dots,\tau_k\}$ for $(f',w_\calH)$ and we return $L = \{x_{\tau_1}, \dots, x_{\tau_k}\}$ where $x_\tau$ is the isomorphism from \cref{lem:pB:bpotobop}. The whole computation is in time $\poly(k,|V|,|E|)$.

  It remains to prove that $L$ is indeed a solution of the optimization problem from the statement. We claim that $x_{\tau_i}$ is a feasible point of $(OPT)$. By \cref{lem:pB:bpotobop}, $\tau_i(x_v)=x_{\tau_i}(v)$, hence $\sum_{v \in V} x_{\tau_i}(v) = \sum_{v \in V} \tau_i(x_v) \in S$ since $\tau_i \in B_S(X,X \cup Y)$.

  Now, we prove that $L$ is a top-$k$ set for $(OPT)$. By \cref{lem:pB:bpotobop}, we have $w_\calH(\tau_i) = P_\calH(x_{\tau_i})$. Hence $P_\calH(x_{\tau_1}) \geq \dots \geq P_\calH(x_{\tau_k})$. Moreover, for any feasible point $x \notin L$ of $(OPT)$, we let $\tau  \in f_\calH$ be the unique assignment of $f_\calH$ such that $x = x_\tau$. Observe that since $x$ verifies $\sum_{v \in V} \tau(x_v) = \sum_{v \in V} x(v) \in S$, we have $\tau \in f_H \cap B_S(X,X \cup Y)$. Moreover, $\tau \notin K$ since  $x \notin L$. Hence we have that $w_\calH(\tau) \leq w_\calH(\tau_k)$ since $K$ is a top-$k$ set for $f_H \cap B_S(X,X \cup Y)$. By \cref{lem:pB:bpotobop} again, $w_\calH(\tau) = P_\calH(x) \leq w_\calH(\tau_k) = P_\calH(x_k)$, which establishes that $L$ is a top-$k$ set for $(OPT)$.
\end{proof}

\section{Extended formulations from DNNF}
\label{sec:extend-form}




Another way of efficiently solving BPO is by providing an extended formulation of polynomial size of its multilinear polytope: one can then solve the optimization problem efficiently with linear programming \cite{Kha79,Tar86}. 
Interestingly, for most known tractable classes of BPO, polynomial size extended formulations are known~\cite{WaiJor04,Lau09,BieMun18,dPKha18SIOPT,dPDiG21IJO,dPKha23MPA}. 
In this section, we give a unifying and generalizing view of these results by proving that if the multilinear set $f_H$ of $H$ is represented as a (smooth) DNNF $C$, then there is an extended formulation of $\conv(f_H)$ of size $O(|C|)$.
In particular, this result provides extended formulations of polynomial size corresponding to all the classes of problems described in \cref{sec:beyond}.

Given a set of vectors $P \subseteq \R^V$, the \emph{convex hull of $P$}, denoted by $\conv(P)$ is defined as $\{ \sum_{p\in P} \alpha_p p \mid \forall p \in P, \alpha_p \geq 0 \text{ and } \sum_{p \in P} \alpha_p = 1 \}$. If $C$ is a DNNF, we define the \emph{convex hull of $C$} to be $\conv(f_C^{-1}(1))$ which we will denote by $\conv(C)$ for short. We prove the following:
\begin{theorem}
\label{thm:extended}
For every smooth DNNF $C$, there exists a matrix $A_C, G_C$ such that $\conv(C) = \{x \in \R^V : \exists y \in \R^W \st A_C x+G_C y \leq b\}$. Moreover, $A_C, G_C$ and $b$ have $O(|C|)$ rows and their coefficients are in $\{-1,0,1\}$ and a sparse representation can be computed in $O(|C|)$ given $C$.
\end{theorem}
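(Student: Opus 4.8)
The plan is to build the extended formulation by walking through the DNNF circuit $C$ bottom-up, introducing one variable per node of $C$ and one constraint per edge, in direct analogy with the classical construction of extended formulations from branching programs or circuits (the Martin--Rardin--Campbell style, or the ``glued polytope'' view). The key idea is that a smooth DNNF gives a \emph{recursive decomposition} of the set of satisfying assignments: a $\vee$-node computes a disjoint union of its children's Boolean functions (each padded with free variables), and a $\wedge$-node computes a product over disjoint variable sets. These two operations correspond exactly to the two basic polyhedral operations whose extended formulations compose cleanly: the convex hull of a disjoint union of polytopes (Balas' union of polytopes), and the Cartesian product of polytopes. So I would introduce, for each node $v$, a variable $z_v \in [0,1]$ meant to be the indicator ``the satisfying assignment routes through $v$,'' together with copies of the original $x$-variables local to each node, and impose flow-conservation--type constraints that force these local copies to be consistent.

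\textbf{Concretely, the steps are as follows.} First I would normalize $C$ to be smooth (which the hypothesis already grants) so that every input of a $\vee$-node mentions exactly the same variables as the node itself; this is what makes the disjoint-union reading of $\vee$-nodes valid without hidden projections. Second, for each node $v$ I associate a vector variable $y^v \in \R^{\var{v}}$ (the ``local'' assignment seen at $v$) plus a scalar flow variable, and I write down, by induction on the circuit, the constraints realizing $\conv(f_v)$ in terms of the variables attached to the inputs of $v$: at a literal-input the polytope is a single point; at a $\wedge$-node $v$ with inputs $w_1,\dots,w_k$ on disjoint variable scopes, $\conv(f_v) = \prod_i \conv(f_{w_i})$ so I simply concatenate the child systems and identify $y^v$ with the union of the $y^{w_i}$; at a $\vee$-node I use Balas' formulation of the convex hull of a union, which for $0/1$ polytopes has $0,\pm 1$ coefficients. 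Third, I set $A_C, G_C, b$ by collecting all these constraints, with $x$ the variables $y^{\out}$ restricted to $V$ and $y$ all the internal auxiliary variables $y^v$ and flow variables. Fourth, I verify the coefficient bound ($\{-1,0,1\}$), the row count ($O(|C|)$ — each edge contributes $O(1)$ constraints), and that the sparse representation is computable in $O(|C|)$ time by one linear pass over the edges.

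\textbf{The correctness argument} is the induction establishing $\proj_{\var{v}}\{(y^v,\dots) : \text{constraints below } v\} = \conv(f_v)$. The base case is immediate; the $\wedge$-step uses that the convex hull of a Cartesian product of finite point sets equals the product of the convex hulls (true because $f_v = \bigtimes_i f_{w_i}$ with disjoint scopes); the $\vee$-step uses Balas' theorem that the projection of the glued system equals $\conv\!\big(\bigcup_i \conv(f_{w_i})\big) = \conv\!\big(\bigcup_i f_{w_i}\big)$, which equals $\conv(f_v)$ precisely because smoothness guarantees $\var{w_i}=\var{v}$ so no spurious padding appears. Pushing the induction to $\out$ and recalling $f_C = f_{\out}\times 2^{V\setminus\var{\out}}$ yields the claim after adjoining the free coordinates with trivial $0\le x\le 1$ bounds.

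\textbf{The main obstacle} is making the $\vee$-node (Balas) step go through with the promised $\{-1,0,1\}$ coefficients and with only $O(1)$ rows \emph{per edge} rather than per node-times-variable. Balas' union formulation naively multiplies each child's constraint matrix by a scalar $\lambda_i$ and introduces disaggregated copies of every variable for every child, which could blow up the size to $\Theta(|C|\cdot|V|)$ and, more subtly, the scaling by $\lambda_i$ threatens the integrality of coefficients. The trick I expect the proof to exploit is that each child system is itself a $0/1$ polytope with a single flow/selector variable, so the disaggregated copies can be \emph{shared} with the $y^{w_i}$ variables already present — the child's own auxiliary variables double as Balas' disaggregated copies — turning the coefficients into pure incidence ($0,\pm1$) relations of the form $y^v_x = \sum_i y^{w_i}_x$ (at $\vee$-nodes) and flow-conservation $z_v = \sum_i z_{e}$ along edges. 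Establishing that this sharing is legitimate — i.e., that the local polytopes are ``affinely glued'' correctly and that the total dual integrality claimed in the introduction indeed falls out of the resulting network-like structure — is the delicate part, and is presumably where the smoothness hypothesis and the bounded in-degree normalization are used in an essential way.
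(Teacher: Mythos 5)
Your high-level plan (compose Balas' union at $\vee$-nodes with Cartesian products at $\wedge$-nodes) is the natural first attempt, and you have correctly located the danger zone, but the fix you sketch does not close the gap, and the paper's proof is structured quite differently precisely to avoid it. The problem is not only the per-node blowup you mention (keeping a local copy $y^v \in \R^{\var{v}}$ for every node already costs $\Theta(|C|\cdot|V|)$ variables rather than $O(|C|)$); the more serious issue is that $C$ is a DAG, not a tree. In Balas' formulation each child system must be scaled by the parent's multiplier $\lambda_i$, so when a gate $w$ feeds into two different $\vee$-gates, each parent needs its \emph{own} scaled copy of the entire extended formulation of $\conv(f_w)$ --- these copies cannot be shared because their right-hand sides carry different multipliers. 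Applied recursively, this duplicates a shared subcircuit once per path from it to the output, which is exponential in general. Your proposed remedy (``the child's own auxiliary variables double as the disaggregated copies'') handles sharing among the children of a single $\vee$-node but says nothing about a gate with several parents, which is where the construction actually breaks.

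The paper sidesteps this by abandoning the node-local inductive viewpoint altogether: it introduces one variable $y_e$ per \emph{edge} of the circuit (no local assignment copies at all), writes flow-conservation-style constraints ($\sum_{e\in\ine[g]}y_e = \sum_{f\in\oute[g]}y_f$ at $\vee$-gates, and $y_e=\sum_{f\in\oute[g]}y_f$ for each ingoing edge $e$ at $\wedge$-gates), and recovers $x_v$ only at the very end as the total flow leaving the unique input gate labeled $v$. Correctness of the integer points is proved via certificates (\cref{lem:dnnf-polyhedron}), and --- crucially --- integrality of the polyhedron is \emph{not} automatic from this network-like structure: \cref{ex TU} exhibits a smooth DNNF whose constraint matrix is not totally unimodular, so the paper must prove total dual integrality by an explicit dual construction (\cref{th TDI}). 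In your scheme integrality would have come for free from Balas, but only because the size bound fails; once you repair the size by switching to a single shared, flow-scaled system per gate, you lose the free integrality and need an argument of the TDI type. So the missing ideas are (i) the edge-flow reformulation that makes shared subcircuits cost nothing, and (ii) a standalone integrality proof to replace the one Balas no longer provides.
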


Extended formulations have historically been known to follow from dynamic programming algorithms. In a way, our result is similar since DNNF can be seen as some trace of a dynamic programming algorithm. Actually, Martin, Rardin and Campbell gave a very general framework in~\cite{MarRarCam90} that could be used on DNNF directly but the translation between both formalism is more tedious than providing an independant proof. 

In this section, we hence fix DNNF $C$ and normalize it as follows, wlog: we first assume that $C$ is smooth since every DNNF $C'$ can be smoothed into a DNNF of size at most $|C| \times |\var{C}|$. Remember that a DNNF is smooth whenever for every $\vee$-gate $g$ and $g'$ a child of $g$, we have $\var{g}=\var{g'}$. We also assume that the output gate of $C$, denoted by $o$, is a $\vee$-gate with no outgoing edges and that there is a path from every gate $g$ of $C$ to $o$. This can be ensured by iteratively removing every gate which does not satisfy this property. Finally, wlog, we assume that for every literal $\ell$, there is at most one input of $C$ labeled by $\ell$. This can be easily ensured by merging several edges labeled by $\ell$. We denote by $\edges[C][\ell]$ the edges going out of the only input labeled by $\ell$. We associate a system of linear inequality constraints to every NNF circuit $C$ as follows. We will use the following variables:
\begin{itemize}
    \item A variable $x_v$ for each $v \in V$,
    \item A variable $y_e$ for each $e \in \edges[C]$.
    \end{itemize}

    In the rest of this section, we use the following notations: for a gate $g$ of $C$, we denote by $\ine[g]$ the set of edges going in $g$ and by $\oute[g]$ the set of edges going out of $g$. We simlarly denote by $\in[g]$ the set of inputs of $g$, that is, the gates $h$ such that $(h,g)$ is an edge of $C$ and by $\out[g]$ the set of outputs of $g$, that is, the gates $h$ such that $(g,h)$ is an edge of $C$. 
    
We define $\S_y(C)$ as the following system of linear constraints in $y$ variables:
\begin{subequations}
\label{eq TDI system}
\begin{align}
    \sum_{e \in \ine[o]} y_e  & = 1,  \label{eq TDI system-o} \\
    \sum_{e \in \ine[g]} y_e - \sum_{f \in \oute[g]} y_f & = 0  & \text{for every $\vee$-gate $g \neq o$,} \label{eq TDI system-or} \\
    y_e - \sum_{f \in \oute[g]} y_f & =0  & \text{for every $\wedge$-gate $g$ and $e \in \ine[g]$} \label{eq TDI system-and} \\
    y_e & \ge 0  & \text{for every $e \in \edges$}. \label{eq TDI system-bounds}
\end{align}
\end{subequations}

We then define $\P_y(C)$ as the polyhedron:
$$
\P_y(C) := \{y \in \R^{n_y} : \text{$y$ satisfies $\S_y(C)$}\},
$$

Now let $\S_{x,y}(C)$ be the system of linear constraints in $x,y$ variables obtained by augmenting $\S_y(C)$ with the following constraints linking $x$ and $y$ variables:
\begin{align}
\label{eq proj}
x_v - \sum_{e \in \edges[C][v]} y_e & = 0 & \text{ for every } v \in V ,
\end{align}
Let $\P_{x,y}(C)$ be the polyhedron:
$$
\P_{x,y}(C) = \{(x,y) \in \R^{n_x+n_y} : \text{$(x,y)$ satisfies $\S_{x,y}(C)$}\}.
$$

\cref{thm:extended} directly follows from \cref{lem:dnnf-polyhedron} which establishes a connection between the projection of integral solutions of $\P_{x,y}(C)$ onto $x$ variables and $f_C^{-1}(1)$ and \cref{lem:pxy-integral} which shows that $\P_{x,y}(C)$ is integral. 

We start by connection $\P_{x,y}(C)$ and $f_C^{-1}(1)$ together:
\begin{lemma}
  \label{lem:dnnf-polyhedron}
  The projection of $\P_{x,y}(C) \cap \Z^{n_x + n_y}$ onto $(x_v)_{v \in V}$ coincides with $f_C^{-1}(1)$.
\end{lemma}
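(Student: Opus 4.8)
The plan is to prove the two inclusions separately, reading an integer point $(x,y)$ of $\P_{x,y}(C)$ as a \emph{certificate flow}. For a gate $g$ I write $z_g := \sum_{e \in \oute[g]} y_e$ for its total outgoing flow, and set $z_o := \sum_{e \in \ine[o]} y_e$, which \eqref{eq TDI system-o} fixes to $1$. With this notation \eqref{eq TDI system-or} says ``incoming flow equals $z_g$'' at every $\vee$-gate, \eqref{eq TDI system-and} says ``every incoming edge carries $z_g$'' at every $\wedge$-gate, and \eqref{eq proj} says $x_v = z_u$ where $u$ is the unique input labelled by the literal $x_v$. Thus $z$ behaves like a demand of value $1$ emitted at the output and propagated downwards, \emph{split} at $\vee$-gates and \emph{duplicated} at $\wedge$-gates. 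Before starting I would record two harmless normalizations in addition to those already fixed in the section: that $\var{o}=V$ (otherwise a free variable has no input, forcing $x_v=0$ while it should range freely; smoothing the output against all of $V$ fixes this), and that $C$ is \emph{reduced}, i.e.\ $f_g \neq \emptyset$ for every gate $g$ -- in particular no input is labelled $0$ -- since an unsatisfiable gate can be deleted in linear time without changing $f_C$.

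For the inclusion $f_C^{-1}(1) \subseteq \proj_x(\P_{x,y}(C)\cap \Z^{n_x+n_y})$ I would argue constructively. Given $\tau \in f_C^{-1}(1)$, I build a certificate top-down: include $o$; at each included $\vee$-gate $g$ select one child $c$ with $\tau|_{\var{c}} \in f_c$ (one exists because $\tau$ satisfies $g$, and $\var{c}=\var{g}$ by smoothness); at each included $\wedge$-gate include \emph{all} children (each is satisfied since $f_g = \bigtimes_{c} f_c$ by decomposability). I then set $z_o=1$ and propagate the demand: a $\vee$-gate assigns $z_g$ to the edge joining it to its selected child and a $\wedge$-gate assigns $z_g$ to the edge joining it to each child, while every edge outside the certificate gets $0$. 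This yields an integer $y\ge 0$ satisfying \eqref{eq TDI system} by construction, and by \eqref{eq proj} the value $x_v=z_u$ equals the number of times the input labelled $x_v$ is visited, which is $\tau(v)$; hence $(x_\tau,y)$ is the desired integer point.

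For the reverse inclusion I take any integer point $(x,y)$ and show $\tau := x$ is a $0/1$ satisfying assignment. Call a gate \emph{active} if $z_g\ge 1$; then \eqref{eq TDI system-or}--\eqref{eq TDI system-and} guarantee that an active $\vee$-gate has an active child, an active $\wedge$-gate has all children active, and $o$ is active. The crucial structural fact is that \textbf{every gate $g$ with $\var{g}\neq\emptyset$ satisfies $z_g\le 1$}, and symmetrically that $x_v$ and $\neg x_v$ are never both active. This is exactly where decomposability enters: two ``units'' of demand reaching a common node $g$ must have been duplicated at some $\wedge$-gate $a$ above $g$, along two distinct children $c_1\neq c_2$ of $a$; but then $g$ descends from both, so $\var{g}\subseteq \var{c_1}\cap\var{c_2}=\emptyset$, a contradiction. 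With this fact \eqref{eq proj} gives $x_v=z_u\in\{0,1\}$, so $x\in\{0,1\}^V$; and a straightforward induction from the leaves shows $\tau|_{\var{g}}\in f_g$ for every active $g$ (literal inputs force the matching value, reducedness excludes active $0$-inputs; $\wedge$-gates combine children through $f_g=\bigtimes_{c} f_c$; $\vee$-gates inherit from an active child via smoothness). Applied to the active output with $\var{o}=V$, this yields $\tau\in f_C$.

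The main obstacle is the boxed structural fact. The qualitative statement ``variable-bearing nodes are not shared across the branches of a $\wedge$-gate'' is immediate from decomposability, but upgrading it to the quantitative bound $z_g\le 1$ requires making precise the informal claim that two units of demand through $g$ were duplicated at a common $\wedge$-ancestor. Since $\wedge$-gates duplicate rather than conserve flow, this is not a standard path decomposition of a conservative flow; I would formalize it either by a top-down induction along a topological order from $o$ that maintains, for each gate, that its demand equals the number of distinct top-down visits (descending through one child at $\vee$-gates and all children at $\wedge$-gates), or by selecting the highest point at which two putative units diverge and showing that the common prefix carries demand exactly $1$, forcing the divergence to occur at a $\wedge$-gate. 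Everything else -- verifying the flow equations of the forward construction and the leaf-to-root induction -- is routine.
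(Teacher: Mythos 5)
Your proposal is correct and follows essentially the same route as the paper: integral points of $\P_{x,y}(C)$ are identified with certificates viewed as a unit of flow emitted at $o$, split at $\vee$-gates and duplicated at $\wedge$-gates, with the forward inclusion given by the top-down certificate construction and the reverse inclusion resting on the structural fact that each gate carries flow at most $1$. The ``main obstacle'' you flag is handled in the paper by precisely the induction you outline (on the depth of a gate, i.e.\ the longest path to $o$): if a gate had two unit outgoing edges, the two flow paths toward $o$ would meet at a gate that decomposability forbids from being a $\wedge$-gate, and the $\vee$-gate balance constraint then contradicts the induction hypothesis at smaller depth --- so your sketch is not a gap but an accurate outline of the paper's argument.
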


\paragraph{Certificates.}  The proof of \Cref{lem:dnnf-polyhedron} is based on the notion of certificates. Intuitively, we prove that the solution of $\P_{x,y}(C)$ can be seen as some kind of fractional relaxation of the notion of certificate. A certificate is a witness in the circuit that a Boolean assignment is a solution. Formally, a \emph{certificate of $C$} is a subset $T$ of gates of $C$ such that the output of $C$ is in $T$ and such that:
\begin{itemize}
\item for every $\vee$-gate $g$ of $T$, exactly one input of $g$ is in $T$,
\item for every $\wedge$-gate of $g$ of $T$, every input of $g$ is in $T$,
\item for every $g$ in $T$ that is not the output of $C$, $g$ is the input of at least one gate in $T$.
\end{itemize}
We denote by $\cert(C)$ the set of certificate of $C$. In DNNF, it is not hard to see that certificates are trees directed toward the output of the circuit, that is, for each gate $g$ in $T$, there is at least one $g'$ in $T$ such that $(g,g')$ is an edge. 
Moreover, they are related to solutions in $C$ as follows (see \cite[Section 6.1.1]{CapelliPhD} for details). If $C$ is a smooth DNNF, one can easily check that for every certificate $T$ and every variable $v \in V$, there is exactly one input gate labeled with a literal involving variable $v$ (that is, either there is exactly one input labeled $v$ or exactly one input labeled $\neg v$). For a certificate $T$, we denote by $\tau_T$ the assignment defined as $\tau_T(v) = 1$ if an input labeled by $v$ is in $T$ and $\tau_T(v) = 0$ if an input labeled by $\neg v$ is in $T$. We have that $\{\tau_T \mid T \in \cert(C)\}$ is the set of solutions of $C$. 

\begin{proof}[Proof of \cref{lem:dnnf-polyhedron}.] We start by proving that the projection of $\P_{x,y}(C) \cap \Z^{n_x + n_y}$ onto $(x_v)_{v \in V}$ coincides with $f_C^{-1}(1)$. To do so, we will show that $\P_{x,y}(C) \cap \Z^{n_x + n_y}$ naturally corresponds to $\cert(C)$. 
More formally, given $T \in \cert(C)$, we let $(\y^T,\x^T)$ to be the point such that for every $e=(g,g') \in \edges$, $y^T_e = 1$ if $g \in T$ and $g' \in T$ and $y_e^T = 0$ otherwise. Moreover, we let $x_v^T = \tau_T(v)$.  We claim that a point $(\y,\x)$ is in $\P_{x,y}(C) \cap \Z^{n_x + n_y}$ if and only if there exists a certificate $T$ such that $(\y,\x) = (\y^T,\x^T)$. Given a certificate $T$, showing that $(\y^T,\x^T) \in \P_{x,y}(C) \cap \Z^{n_x+n_y}$ is a simple check that every constraint in $\S_{x,y}(C)$ are satisfied. All constraints are straightforward to check but the one from \cref{eq proj}. Let $v \in V$. 
By definition, $x_v^T = \tau_T(v)$, that is, $x_v^T$ is set to $1$ if and only if an input gate labeled with literal $v$ appears in $T$ and $0$ otherwise. First, assume that literal $v$ does not appear in $T$. Then $y^T_e = 0$ for every $e \in \edges[C][v]$ and $\tau_T(v) = 0$ by definition. Hence $x^T_v = \tau_T(v) = \sum_{e \in \edges[C][v]} y^T_e$.
Now if literal $v$ appears in $T$, we have to show that there is exactly one edge $e \in \edges[C][v]$ such that $y^T_e = 1$. There is obviously at least one by definition of certificates. The uniqueness follows from the fact that certificates are trees directed to the output of the DNNF. 

For the other way around, consider $(\y,\x) \in \P_{x,y}(C) \cap \Z^{n_x + n_y}$. First, observe that for every gate $g$ and edge $e \in \ine[g]$, by (\ref{eq TDI system-or}) if $g$ is a $\vee$-gate or by (\ref{eq TDI system-and}) if $g$ is a $\wedge$-gate, we have:
\begin{equation}
  \label{eq:flow} y_e \leq \sum_{f \in \oute[g]} y_f.
\end{equation}
We now prove that for every gate $g$, $\sum_{f \in \oute[g]} y_f \in \{0,1\}$. This can be proven by induction on the depth of $g$, that is, the length of the longest path from $g$ to $o$. If the depth of $g$ is $1$, it means that $e=(g,o)$ is an edge of $C$ and is the only edge going out of $g$. Indeed, if there is another edge $(g,g')$ in $\oute[g]$, then since there is a path from $g'$ to $o$, there would be a path from $g$ to $o$ of length greater than $1$, contradicting the fact that $g$ has depth $1$. In other words, $\sum_{f \in \oute[g]} y_f = y_e$.  Now by (\ref{eq TDI system-o}), $y_e \leq 1$ which implies $\sum_{f \in \oute[g]} y_f = y_e \in \{0,1\}$ since $y_e$ is an integer. Now assume that the induction hypothesis holds for every gate of depth at most $d$ and let $g$ be a gate of depth $d+1$. Let $e = (g,g') \in \oute[g]$. Clearly, $g'$ has depth at most $d$, hence the induction hypothesis holds. In other words, $\sum_{f \in \oute[g']} y_f \in \{0,1\}$. But, $e \in \ine[g']$, hence by (\ref{eq:flow}), we have $y_e \leq \sum_{f \in \oute[g']} y_f$. In other words, $y_e \in \{0,1\}$. It remains to show that there is at most one edge $e$ in $\oute[g]$  such that $y_e > 0$. Assume toward a contradiction that two such edges $e,e' \in \oute[g]$ exist, that is, $y_e=y_{e'}=1$. By (\ref{eq:flow}) and by induction, if a gate $g'$ with depth at most $d$ has an incoming edge $f$ with $y_f=1$, then it has an outgoing edge $f'$ with $y_{f'}=1$. Hence, from $g$, we can define two paths going toward the output $o$, one starting with $e$ and the other with $e'$, and both following only edges $f$ with $y_f=1$. Both paths have to meet at some gate $g''$, with two incoming edges $e_1 = (g_1,g'')$ and $e_2 = (g_2,g'')$ with $y_{e_1} = y_{e_2} = 1$. Now observe that $g''$ cannot be a $\wedge$-gate, otherwise both input of $g''$ would share a variable (through the subcircuit rooted in $g$), contradicting the decomposability assumption. Hence $g''$ is a $\vee$-gate having of depth at most $d$ and having two incoming edges with $y_{e_1} = y_{e_2} = 1$. But by (\ref{eq TDI system-or}), it would imply $\sum_{f \in \oute[g'']} y_f \geq 2$, which contradicts the induction hypothesis if $g''$ is not the output of $C$. If $g''=o$, it violates  \eqref{eq TDI system-o}.
Hence $g$ has at most one outgoing edge $e$ with $y_e=1$ which establishes the induction at depth $d+1$.

We have established that for every $g$, $\sum_{f \in \oute[g]} y_f \in \{0,1\}$. One direct consequence is that $y_e\in\{0,1\}$ for every edge $e$ and $x_v \in \{0,1\}$ for every $v \in V$ by (\ref{eq proj}). Now, consider the subset $T$ of gates in $C$ defined as $g \in T$ if there is some edge $e$ of $C$ containing $g$ and such that $y_e = 1$. We prove that $T$ is a certificate of $C$ which would directly establish that $(\y,\x) = (\y^T,\x^T)$. It is enough to check that every condition of certificate are respected by $T$. First, let $g$ be a $\vee$-gate in $T$. By definition, it is in at least one edge $e$ with $y_e = 1$. By (\ref{eq TDI system-or}), $\sum_{f \in \ine[g]} y_f = \sum_{f \in \oute[g]} y_f$ and by what precedes the RHS of this equality must be equal to $1$. In other words, there is exactly one ingoing edge $e_1$ in $g$ and exactly one outgoing edge $e_2$ from $g$ such that $y_{e_1}=y_{e_2}=1$. This proves the first condition of certificate. 
Now, if $g$ is a $\wedge$-gate, again, at most one edge $e$ containing $g$ is such that $y_e=1$ by definition of $T$. From what precedes, we hence must have $\sum_{f \in \oute[g]} y_f = 1$. Hence, by (\ref{eq TDI system-and}), every ingoing edge $e$ of $g$ is such that $y_e = 1$. It remains to show that the third condition holds. Assume toward a contradiction that there is a gate $g$ such that there is an edge $e=(g',g)$ with $y_e=1$ but such that for every outgoing edge $e'=(g,g'')$, $y_{e'}=0$. It directly contradicts (\ref{eq:flow}). Hence the third condition of certificate is respected which concludes the proof. 
\end{proof}

We now show that $\P_{x,y}(C)$ is integral. 
\begin{lemma}
  \label{lem:pxy-integral}
The polyhedron $\P_{x,y}(C)$ is integral.  
\end{lemma}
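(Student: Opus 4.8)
The plan is to prove integrality by establishing the stronger statement that $\P_{x,y}(C)$ equals the convex hull of the certificate vectors $\{(\y^T,\x^T) : T \in \cert(C)\}$ exhibited in the proof of \cref{lem:dnnf-polyhedron}. Since those vectors are $0/1$-valued, the identity $\P_{x,y}(C) = \conv\{(\y^T,\x^T) : T \in \cert(C)\}$ immediately shows $\P_{x,y}(C)$ is a polytope all of whose vertices are integral, which is exactly what integrality asserts. The inclusion $\conv\{(\y^T,\x^T)\} \subseteq \P_{x,y}(C)$ is free, since each $(\y^T,\x^T)$ is feasible by \cref{lem:dnnf-polyhedron} and $\P_{x,y}(C)$ is convex; so the entire content lies in the reverse inclusion, namely that every feasible $(\y,\x)$ is a convex combination of certificate vectors.

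First I would reduce to the $y$-variables. Constraint \eqref{eq proj} forces $x_v = \sum_{e \in \edges[C][v]} y_e$, so a feasible point is determined by its $y$-part, and any convex decomposition $\y = \sum_i \lambda_i \y^{T_i}$ lifts automatically to $(\y,\x) = \sum_i \lambda_i (\y^{T_i},\x^{T_i})$ by linearity of \eqref{eq proj}. Hence it suffices to write any $\y \in \P_y(C)$ as a convex combination of certificate flows $\y^{T}$, which I would do by a flow-decomposition argument that peels off one certificate at a time.

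The core step is the extraction: from any $\y \in \P_y(C)$ with $\sum_{e \in \ine[o]} y_e > 0$, I would produce a certificate $T$ all of whose edges carry strictly positive flow, via a backward trace from the output $o$. Include $o$; whenever an included gate $g$ has a positive outgoing edge in the trace, continue upward according to the type of $g$. If $g$ is a $\vee$-gate, conservation \eqref{eq TDI system-or} (or \eqref{eq TDI system-o} at $o$) gives $\sum_{e \in \ine[g]} y_e = \sum_{f \in \oute[g]} y_f > 0$, so some incoming edge is strictly positive; I select exactly one such edge and include its tail. If $g$ is a $\wedge$-gate, \eqref{eq TDI system-and} gives $y_e = \sum_{f \in \oute[g]} y_f > 0$ for every incoming edge $e$, so I include the tails of all incoming edges. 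The trace terminates at input gates because $C$ is acyclic, and the resulting gate set satisfies the three defining conditions of a certificate; by decomposability the selected edges form a tree, so $T \in \cert(C)$ and $y_e > 0$ for every edge $e$ of $T$.

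Finally I would iterate. Set $\lambda = \min\{y_e : y^T_e = 1\} > 0$ and replace $\y$ by $\y - \lambda\,\y^{T}$. Because $\y^{T}$ satisfies the homogeneous versions of \eqref{eq TDI system-or}--\eqref{eq TDI system-and} and $\lambda$ is the minimum flow along $T$, the new vector is nonnegative, still satisfies conservation at every $\vee$- and $\wedge$-gate, acquires at least one new zero edge, and its output inflow drops by exactly $\lambda$. Repeating until the output inflow reaches $0$ (equivalently $\y = 0$) terminates after finitely many steps and expresses the original $\y$ as $\sum_i \lambda_i \y^{T_i}$ with $\sum_i \lambda_i$ equal to the initial output inflow, which is $1$ by \eqref{eq TDI system-o}; this is the desired convex combination. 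I expect the main obstacle to be the extraction step: one must check carefully that the backward trace always continues along positive edges (using conservation at $\vee$-gates and the replication identity at $\wedge$-gates) and that decomposability forces the traced object to be a genuine certificate, i.e.\ a tree rather than a re-converging subgraph, since only then does $\y^{T}$ subtract cleanly while preserving every constraint of $\S_y(C)$.
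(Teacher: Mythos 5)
Your proof is correct, but it takes a genuinely different route from the paper's. The paper also first reduces to $\P_y(C)$ (via a vertex argument, \cref{lem from Py to Pxy}), but then proves integrality by showing that the system $\S_y(C)$ is \emph{totally dual integral} (\cref{th TDI}): for every integral objective it constructs an integral optimal dual solution by a bottom-up dynamic program on the circuit and invokes the Edmonds--Giles theorem. You instead give a direct primal flow decomposition, peeling off one positive-flow certificate at a time. Your route is more elementary (no LP duality), yields the stronger explicit identity $\P_{x,y}(C)=\conv\{(\y^T,\x^T) : T\in\cert(C)\}$ together with an algorithm for decomposing fractional points, and your lift from $y$ to $x$ via linearity of \eqref{eq proj} is cleaner than the paper's vertex argument; on the other hand it does not establish the TDI property, which the paper highlights as a feature of the formulation. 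The delicate step you flag is indeed the only real work: what must be proved is that no gate is the tail of \emph{two} selected edges, since otherwise the edge-indicator $\y^T$ violates \eqref{eq TDI system-or}--\eqref{eq TDI system-and} and the subtraction does not preserve feasibility. The argument is the merge-point analysis the paper already uses inside the proof of \cref{lem:dnnf-polyhedron}: two selected forward paths leaving the same gate $g'$ must first meet at a gate $m$ with two distinct selected incoming edges; $m$ cannot be a $\vee$-gate because your trace selects exactly one incoming edge per $\vee$-gate, and if $m$ is a $\wedge$-gate its two relevant inputs both contain $\var{g'}$, contradicting decomposability (like the paper, this tacitly assumes $\var{g'}\neq\emptyset$, i.e., ignores constant subcircuits). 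Note also that all you need is feasibility of $\y^T$ for $\S_y(C)$, not that the traced gate set literally satisfies the paper's gate-based definition of a certificate: a traced $\vee$-gate could acquire a second, non-selected input that was included for other reasons, but this does not affect the edge vector your iteration subtracts.
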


We prove \cref{lem:pxy-integral} by proving that $P_y(C)$ is integral. This will be enough thanks to the following observation:
\begin{lemma}
\label{lem from Py to Pxy}
Let $C$ be a NNF circuit.
If $\P_y(C)$ is integral, then so is $\P_{x,y}(C)$.
\end{lemma}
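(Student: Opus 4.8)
The plan is to exploit the fact that constraint \eqref{eq proj} forces each $x_v$ to be a fixed linear function of the $y$ variables, so that $\P_{x,y}(C)$ is merely an isomorphic copy of $\P_y(C)$ sitting inside a larger space. Concretely, I would let $M \in \{0,1\}^{V \times \edges[C]}$ be the $0/1$ matrix defined by $M_{v,e}=1$ if and only if $e \in \edges[C][v]$, so that \eqref{eq proj} reads $x = My$. Since $\S_{x,y}(C)$ is by definition $\S_y(C)$ augmented with \eqref{eq proj}, a point $(x,y)$ lies in $\P_{x,y}(C)$ if and only if $y \in \P_y(C)$ and $x = My$. In other words, the coordinate projection $\pi \colon (x,y) \mapsto y$ restricts to a bijection from $\P_{x,y}(C)$ onto $\P_y(C)$, with inverse the affine map $\phi \colon y \mapsto (My, y)$: surjectivity holds because $(My,y)$ satisfies every constraint of $\S_{x,y}(C)$ whenever $y \in \P_y(C)$, and injectivity holds because \eqref{eq proj} pins down $x$ from $y$.

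With this bijection in hand, the integrality transfer is immediate. An affine bijection between two polyhedra maps vertices to vertices, so every vertex of $\P_{x,y}(C)$ is of the form $\phi(y^*) = (My^*, y^*)$ for a vertex $y^*$ of $\P_y(C)$. By hypothesis $\P_y(C)$ is integral, so $y^*$ is integral; and since $M$ has integer entries, $My^*$ is integral as well. Hence every vertex of $\P_{x,y}(C)$ is integral, which is exactly the statement that $\P_{x,y}(C)$ is integral. To keep the argument self-contained and avoid quoting the face-correspondence of affine bijections as a black box, I would instead argue directly: if $(x^*,y^*)$ is a vertex of $\P_{x,y}(C)$ but $y^* = \tfrac12(y^1+y^2)$ with $y^1 \neq y^2$ in $\P_y(C)$, then $(My^i,y^i) \in \P_{x,y}(C)$ for $i=1,2$ and $(x^*,y^*) = \tfrac12\big((My^1,y^1)+(My^2,y^2)\big)$ is a nontrivial convex combination, contradicting extremality; hence $y^*$ is a vertex of $\P_y(C)$ and the conclusion follows as above.

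The only genuine points to verify, rather than real obstacles, are two. First, both polyhedra are pointed: since every $y_e$ is subject to the nonnegativity constraint \eqref{eq TDI system-bounds}, the recession cone of $\P_y(C)$ contains no line, and the same holds for $\P_{x,y}(C)$ because $x$ is slaved to $y$ through $x=My$; consequently the minimal faces are vertices and ``integral'' is equivalent to ``every vertex is integral'', which is precisely what the argument produces. Second, it is crucial that $M$ is an integer matrix, so that integrality of $y^*$ carries over to $x^* = My^*$; this is immediate since $M$ is $0/1$ by construction. I anticipate that the only real work is the bookkeeping of checking that $\pi$ is a bijection with an affine, integer inverse, after which integrality transfers formally and \cref{lem:pxy-integral} then reduces to the integrality of $\P_y(C)$.
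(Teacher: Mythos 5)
Your proof is correct. It establishes the same key reduction as the paper --- a vertex of $\P_{x,y}(C)$ yields a vertex of $\P_y(C)$ --- but by a different mechanism. The paper argues via the algebraic characterization of vertices: a vertex of $\P_{x,y}(C)$ is the unique solution of $n_x+n_y$ linearly independent tight constraints, of which the $n_x$ constraints \eqref{eq proj} must be present (being the only ones involving $x$, each with a single $x$ variable), leaving $n_y$ linearly independent tight constraints from $\S_y(C)$ that pin down $y$ as a vertex of $\P_y(C)$. You instead observe that \eqref{eq proj} makes $\P_{x,y}(C)$ the image of $\P_y(C)$ under the integral affine injection $y \mapsto (My,y)$, and then run the extreme-point (convex combination) argument directly. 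Your route is slightly more self-contained --- it does not invoke the tight-constraint characterization of vertices --- and it makes explicit two points the paper leaves implicit: that both polyhedra are pointed (via \eqref{eq TDI system-bounds}), so integrality is equivalent to integrality of all vertices, and that integrality of $y^*$ transfers to $x^* = My^*$ because $M$ is a $0/1$ matrix. The paper's version, in turn, generalizes a bit more mechanically to situations where the lifting constraints are not pure equations defining $x$ from $y$. Either argument is acceptable here.
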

\begin{proof}
Let $(\x, \y)$ be a vertex of $\P_{x,y}(C)$.
Then there exist $n_x+n_y$ linearly independent constraints among $\S_{x,y}(C)$ such that $(\x, \y)$ is the unique vector that satisfies these $n_x+n_y$ constraints at equality.
Note that constraints \eqref{eq proj} are the only ones containing variables $x$, and each such constraint contains exactly one $x$ variable with nonzero coefficient.
Thus, $n_x$ of the linearly independent constraints defining $(\x, \y)$ must be \eqref{eq proj}.
The remaining $n_y$ linearly independent constraints defining $(\x, \y)$ must be in $\S_y(C)$, and these constraints only involve $y$ variables.
Hence, $\y$ is a vertex of $\P_y(C)$. 
\end{proof}

To show that $\P_y(C)$ is an integral polyhedron we use total dual integrality.
A rational system of linear inequalities $Ay = b$, $y \ge 0$ is \emph{totally dual integral}, abbreviated \emph{TDI,} if the minimum in the LP-duality equation
$$
\max \{ c^\transp y : Ay=b, \ y \ge 0 \} = \min \{z^\transp b : zA \ge c\}
$$
has an integral optimum solution $z$ for each integral vector $c$ for which the minimum is finite.

\begin{theorem}
\label{th TDI}
Let $C$ be a DNNF circuit.
Then, the system $\S_y(C)$ is TDI and the polyhedron $\P_y(C)$ is integral.
\end{theorem}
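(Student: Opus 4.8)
The plan is to establish the TDI property of $\S_y(C)$ directly, and then obtain integrality of $\P_y(C)$ for free: a TDI system whose right-hand side $b$ is integral defines an integral polyhedron (Edmonds--Giles), and here $b$ has a single entry $1$ on the output row \eqref{eq TDI system-o} and $0$ elsewhere. Since $\S_y(C)$ already has the shape $Ay=b$, $y\ge 0$ demanded by the definition, the dual objective $b^\transp z$ equals the single dual variable $z_o$ attached to \eqref{eq TDI system-o}, so everything reduces to exhibiting, for every integral cost $c$ with finite dual minimum, an \emph{integral} optimal dual solution.

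First I would read the dual off column by column. I attach a free variable $\pi_g$ to each constraint \eqref{eq TDI system-or} of a $\vee$-gate $g\neq o$, the variable $z_o$ to \eqref{eq TDI system-o}, and a free variable $\mu_{g,e}$ to each constraint \eqref{eq TDI system-and}, which is indexed by a $\wedge$-gate $g$ together with one incoming edge $e\in\ine[g]$. For an edge $e=(g,h)\in\edges$ the variable $y_e$ occurs with coefficient $+1$ in the head constraint of $h$ and with $-1$ in every tail constraint of $g$; hence the dual inequality for $e$ is $\mathrm{head}(e)-\phi_g\ge c_e$, where $\phi_g:=\pi_g$ if $g$ is a $\vee$-gate, $\phi_g:=\sum_{e'\in\ine[g]}\mu_{g,e'}$ if $g$ is a $\wedge$-gate, $\phi_g:=0$ if $g$ is an input, and $\mathrm{head}(e)=\phi_h$ when $h$ is a $\vee$-gate or the output, while $\mathrm{head}(e)=\mu_{h,e}$ when $h$ is a $\wedge$-gate.

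The construction mirrors the certificate structure. For integral $c$ I define integers $M_g$ in reverse topological order: $M_g=0$ at inputs, $M_g=\max_{e=(h,g)\in\ine[g]}(c_e+M_h)$ at $\vee$-gates and at the output, and $M_g=\sum_{e=(h,g)\in\ine[g]}(c_e+M_h)$ at $\wedge$-gates. Because certificates of a DNNF are trees directed to the output, $M_g$ is exactly the maximum of $\sum_e c_e$ over certificates of the subcircuit feeding $g$, so this recursion is exact. I then set $\pi_g:=M_g$, $z_o:=M_o$, and $\mu_{h,e}:=c_e+M_g$ for every edge $e=(g,h)$ entering a $\wedge$-gate $h$. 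With these choices $\phi_g=M_g$ for \emph{every} gate---the $\wedge$-case holds since $\sum_{e'=(g',h)}\mu_{h,e'}=\sum_{e'=(g',h)}(c_{e'}+M_{g'})=M_h$---so all dual variables are integral. Dual feasibility is then immediate: an edge into a $\vee$-gate or the output yields $M_h-M_g\ge c_e$, which is precisely the maximum defining $M_h$, and an edge into a $\wedge$-gate yields the equality $\mu_{h,e}-M_g=c_e$ by construction.

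To finish, I would check optimality by weak duality. The value of this dual solution is $z_o=M_o$. Choosing the certificate $T^*$ that attains the recursion produces, through the correspondence of \cref{lem:dnnf-polyhedron}, a primal-feasible point $y^{T^*}\in\P_y(C)$ with $c^\transp y^{T^*}=M_o$; weak LP duality then forces the primal maximum to equal $M_o$, so the integral dual solution above is optimal and $\S_y(C)$ is TDI. Integrality of $\P_y(C)$ follows from TDI together with the integrality of $b$ (and \cref{lem from Py to Pxy} then transfers this to $\P_{x,y}(C)$). I expect the only real obstacle to be the bookkeeping of the $\wedge$-gate constraints \eqref{eq TDI system-and}: since each $\wedge$-gate contributes one equation \emph{per} incoming edge rather than a single flow-conservation equation, the dual carries a separate variable $\mu_{g,e}$ for each incoming edge, and the plain gate-potential reformulation breaks there. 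The decisive observation is that precisely this per-edge freedom lets one make every dual inequality at a $\wedge$-gate tight with the integral choice $\mu_{h,e}=c_e+M_g$, which is what encodes the AND semantics of the circuit while keeping the dual optimum integral.
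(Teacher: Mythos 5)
Your proposal is correct and follows essentially the same route as the paper: the same dual, the same integral dual solution built in topological order (your gate potentials $M_g$ coincide with the paper's $z^*_g$ and $\sum_{e}z^*_{g,e}$), and the same feasibility check. The only cosmetic difference is in certifying optimality — you exhibit a primal certificate point of value $M_o$ and invoke weak duality, whereas the paper aggregates the tight dual constraints along a certificate to derive $z_o \ge z^*_o$ directly; both arguments are valid and rely on the same fact that certificates of a DNNF are trees directed toward the output.
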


\begin{proof}
It suffices to prove that the system $\S_y(C)$ is TDI.
In fact, since the right-hand-side is integral, it then follows from \cite{EdmGil77} (see also Corollary 22.1c in \cite{SchBookIP}) that $\P_y(C)$ is integral.
Hence, in the remainder of the proof we show that the system $\S_y(C)$ is TDI.

\textbf{Dual problem.}
Let $c \in \Z^{\edges}$.
To write the dual of the LP problem $\max \{ c^\transp y : y \text{ satisfies } \S_y \}$, we associate variable $z_o$ to constraint \eqref{eq TDI system-o}, 
variables $z_g$, for every $\vee$-gate $g$, to constraints \eqref{eq TDI system-or},
variables $z_{g,e}$, for every $\wedge$-gate $g$ and $e \in \ine[g]$, to constraints \eqref{eq TDI system-and}.
It will be useful to define, for every gate $g$, $\zvar[g]$ as the set of $z$ variables corresponding to $g$.
Note that sets $\zvar[g]$, for every gate $g$, partition all the $z$ variables.
Furthermore, if $g$ is an input gate we have $\zvar[g] = \emptyset$, if $g$ is a $\vee$-gate we have $\zvar[g] = \{z_g\}$, and if $g$ is a $\wedge$-gate we have $\zvar[g] = \{z_{g,e} : e \in \ine[g]\}$.
The dual of the LP problem $\max \{ c^\transp y : y \text{ satisfies } \S_y \}$ is 
\begin{align*}
\min && z_o \\
\text{s.t.} && z_h & \ge c_e && \qquad\text{$\forall e=(g,h) \in \edges$ with $g$ input, $h$ $\vee$-gate,} \\
&& z_{h,e} & \ge c_e && \qquad\text{$\forall e=(g,h) \in \edges$ with $g$ input, $h$ $\wedge$-gate,} \\
&& -z_g + z_h & \ge c_e && \qquad\text{$\forall e=(g,h) \in \edges$ with $g$ $\vee$-gate, $h$ $\vee$-gate,} \\
&& -z_g + z_{h,e} & \ge c_e && \qquad\text{$\forall e=(g,h) \in \edges$ with $g$ $\vee$-gate, $h$ $\wedge$-gate,} \\
&& -\sum_{f \in \ine[g]} z_{g,f} + z_h & \ge c_e && \qquad\text{$\forall e=(g,h) \in \edges$ with $g$ $\wedge$-gate, $h$ $\vee$-gate,} \\
&& -\sum_{f \in \ine[g]} z_{g,f} + z_{h,e} & \ge c_e && \qquad\text{$\forall e=(g,h) \in \edges$ with $g$ $\wedge$-gate, $h$ $\wedge$-gate.}
\end{align*}

\textbf{Algorithm.}
Next, we give an algorithm that, as we will show later, constructs an optimal solution to the dual which is integral.
The algorithm recursively assigns values to the variables starting from the inputs, and proceeding towards the output.
One variable is assigned its value only when all variables of its inputs have already been assigned. 
More precisely, a variable in $\zvar[h]$ is assigned its value only when all variables $\zvar[g]$, for each $g \in \ing[h]$ have already been assigned.

At the very beginning, the algorithm considers the gates $h$ such that $\ing[h]$ contains only input gates, since $g$ is an input gate if and only if $\zvar[g] = \emptyset$.
As a warm up, we describe the algorithm in this simpler case.
If $h$ is a $\vee$-gate, there can be several constraints in the dual lower-bounding $z_h$:
\begin{align*}
z_h \ge c_e && \qquad\text{$\forall e \in \ine[h]$.}
\end{align*}
Thus we assign $z^*_h := \max \{ c_e : e \in \ine[h]\}$. 
If $h$ is a $\wedge$-gate, there is exactly one constraint in the dual lower-bounding each variable $z_{h,e}$, for $e \in \ine[h]$:
\begin{align*}
z_{h,e} \ge c_e. 
\end{align*}
Thus we assign $z^*_{h,e} := c_e$ for each $e \in \ine[h]$.

Next, we describe the general iteration of the algorithm.
Let $h$ be a gate such that, for each $g \in \ing[h]$, the variables in $\zvar[g]$ have already been assigned. 
Consider first the case where $h$ is a $\vee$-gate.
The constraints in the dual lower-bounding $z_h$ are:
\begin{align}
\label{eq or lb}
\begin{split}
z_h \ge c_e & \qquad\text{$\forall e=(g,h) \in \edges$ with $g$ input,} \\
z_h \ge c_e + z_g & \qquad\text{$\forall e=(g,h) \in \edges$ with $g$ $\vee$-gate,} \\
z_h \ge c_e + \sum_{f \in \ine[g]} z_{g,f} & \qquad\text{$\forall e=(g,h) \in \edges$ with $g$ $\wedge$-gate.}
\end{split}
\end{align}
Note that all variables on the right-hand of the above constraints have already been assigned by the algorithm.
Thus we assign $z^*_h$ as follows:
\begin{align}
\label{eq or max}
z^*_h := \max
\begin{Bmatrix*}[l]
c_e & \text{$\forall e=(g,h) \in \edges$ with $g$ input,} \\
c_e + z^*_g & \text{$\forall e=(g,h) \in \edges$ with $g$ $\vee$-gate,} \\ 
c_e + \sum_{f \in \ine[g]} z^*_{g,e} & \text{$\forall e=(g,h) \in \edges$ with $g$ $\wedge$-gate}
\end{Bmatrix*}.
\end{align}
Next, consider the case where $h$ is a $\wedge$-gate, and let $e=(g,h) \in \ine[h]$.
There is exactly one constraint in the dual lower-bounding $z_{h,e}$, which is:
\begin{align}
\label{eq and lb}
\begin{split}
z_{h,e} \ge c_e & \qquad\text{if $g$ input,} \\
z_{h,e} \ge c_e + z_g & \qquad\text{if $g$ $\vee$-gate,} \\
z_{h,e} \ge c_e + \sum_{f \in \ine[g]} z_{g,f} & \qquad\text{if $g$ $\wedge$-gate.}
\end{split}
\end{align}
Also in this case, all variables on the right-hand of the above constraints have already been assigned by the algorithm.
Thus we assign $z^*_{h,e}$ as follows:
\begin{align}
\label{eq and max}
z^*_{h,e} := 
\begin{cases}
c_e & \qquad\text{if $g$ input,} \\
c_e + z^*_g & \qquad\text{if $g$ $\vee$-gate,} \\
c_e + \sum_{f \in \ine[g]} z^*_{f,e} & \qquad\text{if $g$ $\wedge$-gate}.
\end{cases}
\end{align}
This concludes the description of the algorthm. Note that, since $c$ is integral, $z^*$ is integral.

\textbf{Termination.}
Our algorithm assigns a value to each variable.
This follows from the structure of the directed graph $C$, since $C$ is acyclic and the inputs of $C$ are precisely the gates of $C$ that have no ingoing edge. Moreover $C$ is a connected graph since we assumed there is a path from every gate $g$ to the output $o$.

\textbf{Feasibility.}
It is simple to see from the definition of our algorithm that the solution $z^*$ constructed is feasible to the dual.
In fact, in each step, the component of $z^*$ considered is defined in a way that it satisfies all inequalities lower-bounding it.
On top of that, each constraint in the dual is considered at some point in the algorithm.

\textbf{Optimality.}
To prove that the solution $z^*$ constructed by the algorithm is optimal to the dual, we show that any $z$ feasible for the dual must have objective value greater than or equal to that of $z^*$, which is $z^*_o$.
To do so, we give a procedure that selects a number of constraints of the dual.
Summing together the selected constraints yields the inequality $z_o \ge z^*_o$.
We remark that the procedure is strongly connected to the concept of certificates.

The procedure is recursive and starts from the gate $o$, which in particular is a $\vee$ gate. We give the general recursive construction for a gate $h$.
\begin{itemize}
\item
If $h$ is an input gate, there is nothing to do.
\item
If $h$ is a $\vee$ gate, we select one constraint among \eqref{eq or lb} that achieves the maximum in \eqref{eq or max}.
This essentially amounts to selecting an edge $e=(g,h) \in \edges$, and therefore a new gate $g$.
We then apply recursively the construction to the gate $g$.
\item
If $h$ is a $\wedge$ gate, we select, for every $e \in \ine[h]$, the one constraint \eqref{eq and lb}.
This essentially amounts to selecting all edges $e=(g,h) \in \edges$, and therefore all new gates $g \in \ing[h]$.
We then apply recursively the construction to all the gates $g \in \ing[h]$.
\end{itemize}
It is simple to check that summing all the inequalities selected by the above recursive procedure yields $z_o \ge z^*_o$.
\end{proof}

\begin{observation}
  \label{ex TU}
  A common way of showing integrality is by showing that the matrix of the system is totally unimodular. This method would fail in our case however and this is why we resorted to TDI. Figure~\ref{fig:notTU} depicts a DNNF $C$ such that $\S_y(C)$ is:

  \begin{subequations}
    \label{eq exTU}
    \begin{align}
      y_1 + y_6 = 1
      \label{eq exTU-1} \\
      y_2 + y_4 - y_1 = 0
      \label{eq exTU-2} \\
      y_3 - y_2 = 0
      \label{eq exTU-3} \\
      y_5 - y_3 = 0
      \label{eq exTU-4} \\
      y_7 - y_3 = 0
      \label{eq exTU-5} \\
      y_8 - y_4 - y_5 = 0
      \label{eq exTU-6} \\
      y_9 - y_6 - y_7 = 0 \\
      \label{eq exTU-6-1} 
      y_{10} - y_6 - y_7 = 0 \\
      \label{eq exTU-7}
      0 \le y \le 1.
    \end{align}
  \end{subequations}
  The matrix obtained from the constraint matrix of the above system $\S_y(C)$ by dropping the row corresponding to constraint \eqref{eq exTU-3}, \eqref{eq exTU-6-1} and the columns corresponding to variables $y_2,y_8,y_9,y_{10}$ is given by 
$$
\begin{pmatrix}
+1 & 0 & 0 & 0 & +1 & 0 \\
-1 & 0 & 1 & 0 & 0 & 0 \\
0 & -1 & 0 & 1 & 0 & 0 \\
0 & -1 & 0 & 0 & 0 & 1 \\
0 & 0 & -1 & -1 & 0 & 0 \\
0 & 0 & 0 & 0 & -1 & -1
\end{pmatrix}.
$$
This is a square matrix with determinant equal to $2$.
Therefore, the constraint matrix of the system $\S_y(C)$ is not totally unimodular.

  \begin{figure}
    \centering
    \includegraphics[scale=1]{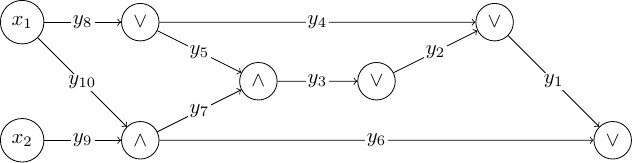}
    \caption{A smooth DNNF circuit $C$ where $\S_y(C)$ is not totally unimodular.}
    \label{fig:notTU}
\end{figure}

\end{observation}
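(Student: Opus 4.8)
The plan is to prove the observation by exhibiting a single square submatrix of the constraint matrix of $\S_y(C)$ whose determinant is not in $\{-1,0,+1\}$. Recall that a matrix is totally unimodular exactly when the determinant of every square submatrix lies in $\{-1,0,+1\}$, so producing one $6\times 6$ submatrix with determinant $2$ already refutes total unimodularity. The argument thus has three ingredients: transcribing $\S_y(C)$ for the specific circuit, selecting the right rows and columns, and evaluating one determinant.

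First I would read off the circuit $C$ drawn in \cref{fig:notTU} and check that the equality part of $\S_y(C)$ is precisely the system \eqref{eq exTU-1}--\eqref{eq exTU-7}. This is a direct instantiation of \eqref{eq TDI system}: the output $\vee$-gate $o$ gives the normalization \eqref{eq TDI system-o}, i.e.\ \eqref{eq exTU-1}; each internal $\vee$-gate $g\neq o$ gives the flow-conservation constraint \eqref{eq TDI system-or} equating its incoming and outgoing flow; and each $\wedge$-gate $g$ gives one constraint \eqref{eq TDI system-and} per incoming edge, equating that edge to the total outgoing flow of $g$. Labelling the ten edges $y_1,\dots,y_{10}$ as in the figure and reading these constraints gate by gate reproduces \eqref{eq exTU-2}--\eqref{eq exTU-7}, the bounds in \eqref{eq exTU-7} coming from \eqref{eq TDI system-bounds}.

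Next I would delete the rows for \eqref{eq exTU-3} and \eqref{eq exTU-6-1} together with the columns for $y_2,y_8,y_9,y_{10}$, leaving a $6\times 6$ matrix whose columns are indexed by $(y_1,y_3,y_4,y_5,y_6,y_7)$; reading coefficients off the six surviving equations gives exactly the displayed matrix. Its determinant I would compute by Laplace expansion along the sparse last row $(0,0,0,0,-1,-1)$: this reduces the task to the two $5\times 5$ minors obtained by deleting column $y_6$ and column $y_7$ respectively, and within each minor the column deletion turns a row into a singleton, so iterated cofactor expansion (equivalently, Gaussian elimination) evaluates them to $1$ and $-1$. The cofactor sum is then $(-1)(-1)^{11}\cdot 1 + (-1)(-1)^{12}\cdot(-1) = 2$. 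Since $2\notin\{-1,0,1\}$, the constraint matrix of $\S_y(C)$ is not totally unimodular, which is the assertion.

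There is no substantive mathematical obstacle, since the statement is a finite verification; the only steps requiring care are reading the gate-and-edge structure of \cref{fig:notTU} faithfully, so that $\S_y(C)$ is transcribed correctly, and carrying out the $6\times 6$ determinant without arithmetic slips. I would also note that the dropped rows and columns are not canonical: any square submatrix of absolute determinant at least $2$ witnesses the failure of total unimodularity, and the indicated choice simply isolates one convenient determinant-$2$ block.
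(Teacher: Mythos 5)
Your proposal is correct and takes essentially the same route as the paper: both transcribe $\S_y(C)$ from the circuit of \cref{fig:notTU}, drop the rows \eqref{eq exTU-3} and \eqref{eq exTU-6-1} together with the columns $y_2,y_8,y_9,y_{10}$, and certify that the resulting $6\times 6$ submatrix has determinant $2$, which refutes total unimodularity. Your cofactor expansion along the last row (minors $1$ and $-1$, summing to $2$) checks out, and your remark that any submatrix with $|\det|\geq 2$ would do, including the interchangeable $y_9$/$y_{10}$ rows, is accurate.
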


As a direct consequence of the results presented in this section, we obtain extended formulations of polynomial size for the multilinear polytope $\MP_H$, and for the convex hull of the points in the multilinear set $\MS_H$ that satisfy extended cardinality constraints, provided that $H$ is $\beta$-acyclic, or the incidence treewidth of $H$ is bounded by $\log(\poly(|V|,|E|))$. Indeed, by \cref{thm:bpo-tw-dnnf,thm:bpo-beta-dnnf}, the exists a polynomial size DNNF representing the multilinear set of such BPO instances, and the extended formulation is extracted from the DNNF using \cref{thm:extended}. Observe that this also applies to $\beta$-acyclic and bounded (incidence) treewidth BPO instances with literals and with extended cardinality constraints since polynomial size DNNF of the multilinear set also exist in this case using the results from \cref{sec:beyond}.


\section{Experiments}
\label{sec:experiments}

The connection between BPO instances and weighted Boolean functions from \cref{thm:bpotobop} suggests that one could leverage tools initially developed for Boolean function into solving BPO instances. In this section, we compare such an approach with the SCIP solver. We modified the d4 knowledge compiler~\cite{lagniez2017improved} (available at \url{https://github.com/crillab/d4v2}) so that it directly computes the optimal value of a Boolean function given as a CNF $F$ on a given weighting $w$. 

We tested our approach on the Low Autocorrelation Binary Sequences (LABS problem for short) which has been defined in~\cite{liers2010non} and can be turned into instances of the BPO problem. We run our experiment on a 13th Gen Intel(R) Core(TM) i7-1370P with 32Go of RAM and with a 60 minutes timeout. We used the instances available at on the MINLPLib library~\cite{bussieck2003minlplib}\footnote{Available in PIP format at \url{https://www.minlplib.org/applications.html\#AutocorrelatedSequences}} and we compare the performances with SCIP 8.0.4~\cite{scip}, GUROBI~\cite{gurobi} and CPLEX~\cite{cplex}.
These instances have two parameters, $n$ and $w$, and are reported under the name \texttt{bernasconi.n.w} in our experiments.

%

%
Our results are reported on \cref{fig:results} and show that the approach, on this particular instances, runs faster than these solvers by two orders of magnitude at least. The optimal value reported by each tool when they terminate matches the one given on MINLPLIB. We believe that this behavior is explained by the fact that the treewidth of \texttt{bernasconi.n.w} is of the order of $w$ and that d4 is able to leverage small treewidth to speed up computation, an optimization the linear solver are not doing, leading to timeouts even for small values of $w$ . 
 
\begin{figure}
  \centering
  \begin{longtable}[]{@{}l|l|l|l|l@{}}
\toprule
name & d4 & scip & cplex & gurobi \\
\midrule
\endhead
bernasconi.20.3 & < 0.01 & 0.01 & 0.01 & 0.01 \\
bernasconi.20.5 & < 0.01 & 6.77 & 0.85 & 0.65 \\
bernasconi.20.10 & 0.2 & 90.22 & 14.86 & 4.8 \\
bernasconi.20.15 & 3.98 & 292.31 & 50.14 & 58.18 \\
bernasconi.25.3 &  0.01 & 0.02 & 0.01 &  0.01 \\
bernasconi.25.6 & 0.06 & 86.82 & 50.79 & 10.18 \\
bernasconi.25.13 & 3.45 & 1240.47 & 355.23 & 274.34 \\
bernasconi.25.19 & 137.24 & -- & 1223.65 & 1045.32 \\
bernasconi.25.25 & 1570.98 & -- & -- & -- \\
bernasconi.30.4 & 0.02 & 28.07 & 4.81 & 2.13 \\
bernasconi.30.8 & 1.4 & 2828.35 & 2465.93 & 549.56 \\
bernasconi.30.15 & 77.09 & -- & -- & -- \\
bernasconi.30.23 & -- & -- & -- & -- \\
bernasconi.30.30 & -- & -- & -- & -- \\
bernasconi.35.4 & 0.01 & 44.06 & 10.19 & 9.23 \\
bernasconi.35.9 & 7.13 & -- & -- & -- \\
bernasconi.35.18 & 688.73 & -- & -- & -- \\
bernasconi.35.26 & -- & -- & -- & -- \\
bernasconi.35.35 & -- & -- & -- & -- \\
bernasconi.40.5 & 0.09 & 2345.71 & 1503.44 & 320.75 \\
bernasconi.40.10 & 44.86 & -- & -- & -- \\
bernasconi.40.20 & -- & -- & -- & -- \\
bernasconi.40.30 & -- & -- & -- & -- \\
bernasconi.40.40 & -- & -- & -- & -- \\
bernasconi.45.5 & 0.16 & -- & -- & 669.28 \\
bernasconi.45.11 & 217.43 & -- & -- & -- \\
bernasconi.45.23 & -- & -- & -- & -- \\
bernasconi.45.34 & -- & -- & -- & -- \\
bernasconi.45.45 & -- & -- & -- & -- \\
bernasconi.50.6 & 1.07 & -- & -- & -- \\
bernasconi.50.13 & -- & -- & -- & -- \\
bernasconi.55.6 & 1.61 & -- & -- & -- \\
bernasconi.60.8 & 46.11 & -- & -- & -- \\
bernasconi.60.15 & -- & -- & -- & -- \\
\bottomrule
\end{longtable}

\caption{Runtimes (in seconds) of different solvers on LABS instances. A dash (--) denotes that the solver was not able to finish computation in under an hour.}
\label{fig:results}
\end{figure}

\smallskip
\textbf{Funding and Acknowledgement.} A. Del Pia is partially funded by AFOSR grant FA9550-23-1-0433. Any opinions, findings, and conclusions or recommendations expressed in this material are those of the authors and do not necessarily reflect the views of the Air Force Office of Scientific Research. F. Capelli is partially funded by project KCODA from Agence Nationale de la Recherche, project ANR-20-CE48-0004. F. Capelli is grateful to Jean-Marie Lagniez who helped him navigate the codebase of d4 to modify it in order to solve \BOP{} problems on d-DNNF.

\bibliographystyle{plain}
\bibliography{biblio}

\appendix

\section{Graph Theoretical Lemmas}
\label{appendix graphs}

Let $G=(V,E)$ be a graph and $E' \subseteq E$. The \emph{edge subdivision of $G$ along $E'$} is a graph $sub(G,E')$ obtained from $G$  by splitting every edge of $E'$ in two. More formally, $sub(G,E') = (V',E'')$ where $V' = V \cup \{v_{e} \mid e \in E'\}$ and $E'' = (E \setminus E') \cup \{\{v, v_{e}\} \mid e \in E', v \in e \}$. One can easily see that splitting edges does not change the treewidth of $G$:

\begin{lemma}
\label{lem:twsub}
For every graph $G=(V,E)$, $E' \subseteq E$ and $G' = sub(G,E')$. We have $\tw{G'} \leq \tw{G}$.
\end{lemma}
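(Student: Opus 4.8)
The plan is to start from an optimal tree decomposition of $G$ and to \emph{patch} it into a tree decomposition of $G' = sub(G,E')$ by hanging, for every subdivided edge, a single new bag that carries its subdivision vertex. First I would fix an optimal tree decomposition $\calT = (T,(B_t)_{t \in V_T})$ of $G$, so that $\max_{t \in V_T} |B_t| - 1 = \tw{G}$. For each $e = \{u,v\} \in E'$, the covering property of $\calT$ yields a node $t_e \in V_T$ with $u,v \in B_{t_e}$ (since $e$ is an edge of $G$). I would then introduce a fresh tree node $s_e$, attach it as a leaf to $t_e$, and set $B_{s_e} = \{u,v,v_e\}$, where $v_e$ denotes the subdivision vertex of $e$. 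Let $\calT'$ be the resulting labelled tree; all subdivided edges are treated simultaneously and independently, so distinct $v_e$ never interfere.

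Next I would verify that $\calT'$ is a tree decomposition of $G'$. For covering: each edge of $G'$ incident to no subdivision vertex belongs to $E \setminus E'$ and is thus already covered by some original bag; and for $e \in E'$ the two new edges $\{u,v_e\}$ and $\{v,v_e\}$ are both covered by $B_{s_e}$. For connectedness: every subdivision vertex $v_e$ lies in exactly one bag, namely $B_{s_e}$, hence is trivially connected; an original vertex $a \in V$ occupies a connected subtree $T_a \subseteq T$ in $\calT$, and the only new bags containing $a$ are those $B_{s_e}$ with $a \in \{u,v\}$, each hung off a node $t_e \in T_a$, so the occurrences of $a$ remain connected in $\calT'$. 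Therefore $\calT'$ is a valid decomposition of $G'$, and since the only new bags have size $3$, its width is $\max(\tw{G},2)$.

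This already gives $\tw{G'} \le \tw{G}$ whenever $\tw{G} \ge 2$. The only case left is $\tw{G} \le 1$: then $G$ is acyclic, i.e.\ a forest, and subdividing edges preserves acyclicity, so $G'$ is a forest and $\tw{G'} \le 1 \le \tw{G}$ (if moreover $E' \ne \emptyset$ then $G$, hence $G'$, has an edge, forcing $\tw{G} = 1$). I expect the main subtlety to be precisely this connectedness bookkeeping for the endpoints: one cannot route $v_e$ through two size-$2$ bags $\{u,v_e\}$ and $\{v,v_e\}$, because the path between them in $\calT'$ would pass through a bag missing $u$ or $v$ and thereby disconnect that endpoint's occurrences. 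This is exactly why a single bag holding all of $u, v, v_e$ is needed, and why the size-$2$/forest situation has to be closed off by the separate acyclicity argument above rather than by the same construction.
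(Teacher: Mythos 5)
Your proof is correct and follows essentially the same route as the paper's: take an optimal tree decomposition of $G$, and for each subdivided edge $e=\{u,v\}$ hang a new leaf bag $\{u,v,v_e\}$ off a bag covering $e$, treating the width-at-most-one (forest) case separately. Your write-up is in fact more careful than the paper's, which asserts covering and connectedness without the explicit verification you give.
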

\begin{proof}
  If $G$ is a tree, then it is clear that $G'$ is also a tree and automatically we have that $\tw{G}=\tw{G'}=1$. 
  Otherwise let $T$ be a tree decomposition of $G$ of width $k>1$. We transform it into a decomposition $T'$ of $G'$ as follows: for every
$e=\{u,v\} \in E'$, let $B_e$ be a bag of $T$ such that $e \subseteq B_e$ (note that it exists by definition of tree decomposition). 
In $T'$, we attach a new bag $B=\{u,v,v_e\}$ to $B(e)$, for all $e \in E'$. 
This is clearly a tree decomposition of $G'$ and its width is the same as the width of $T$ since we only added bags of size $3$ and the treewidth of $T$ is bigger than $1$.

\end{proof}

Let  $G=(V,E)$ be a graph. 
For $u \in V$, the \emph{neighborhood of $u$} is defined as $\neigh[u][G] = \bigcup_{e \in E , u \in e} e$.
A \emph{module $M \subseteq V$ of $G$} is a set of vertices such that for every $u,v \in M$, $\neigh[u][G] \setminus M = \neigh[v][G] \setminus M$, that is, every vertex of $M$ has the same neighborhood outside of $M$. We denote by $\oneigh[M][G]$ this set. A \emph{partition of $G$ into modules} is a partition $\calM=(M_1, \dots, M_k)$ of $V$ where $M_i$ is a module of $V$ (possibly of size $1$). The \emph{modular contraction of $G$ wrt $\calM$}, denoted by $G/\calM$, is the graph whose vertices are $\calM$ and such that there is an edge between $M_i$ and $M_j$ if and only if $\oneigh[M][G] \cap M' \neq \emptyset$ (observe that in this case $M' \subseteq \oneigh[M][G]$ and $M \subseteq \oneigh[M'][G]$). While it is known that the treewidth of a modular contraction of $G$ might decrease it arbitrarily~\cite{PaulusmaSS13}, we can bound it as follows:

\begin{lemma}
\label{lem:twmodule}
    Let $G=(V,E)$ be a graph, $\calM=\{M_1,\dots,M_k\}$ a partition of $G$ into modules and $m=\max_{i=1}^k |M_i|$. Then $\tw{G} \leq m\times (1+\tw{G/\calM})-1$.
\end{lemma}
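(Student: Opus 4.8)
The plan is to take an optimal tree decomposition of the contraction $G/\calM$ and ``blow up'' every bag by replacing each module it contains with the set of its constituent vertices. Concretely, let $\calT=(T,(\hat B_s)_{s\in V_T})$ be a tree decomposition of $G/\calM$ of width $t=\tw{G/\calM}$, so each $\hat B_s$ is a set of at most $t+1$ modules. For every node $s$ of $T$ I would define $B_s=\bigcup_{M\in \hat B_s} M\subseteq V$, keeping the same underlying tree $T$. Since each $\hat B_s$ contains at most $t+1$ modules and each module has at most $m$ vertices, we immediately get $|B_s|\le m(t+1)$, so the width of the resulting decomposition is at most $m(t+1)-1=m(1+\tw{G/\calM})-1$. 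Thus it only remains to check that $(T,(B_s)_{s\in V_T})$ is a genuine tree decomposition of $G$, which yields $\tw{G}\le m(1+\tw{G/\calM})-1$.

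For the covering property I would split into two cases according to an edge $\{u,v\}\in E$, writing $u\in M_i$ and $v\in M_j$ (recall that $\calM$ partitions $V$, so $i$ and $j$ are well defined). If $i=j$, then $M_i$ is a vertex of $G/\calM$ and hence lies in some bag $\hat B_s$; the corresponding $B_s$ contains all of $M_i$ and in particular both $u$ and $v$. If $i\ne j$, the module property gives the edge we need in the contraction: since $v\in \neigh[u][G]\setminus M_i=\oneigh[M_i][G]$, we have $v\in \oneigh[M_i][G]\cap M_j$, so $\{M_i,M_j\}$ is an edge of $G/\calM$ and is therefore covered by some bag $\hat B_s\ni M_i,M_j$; then $B_s\supseteq M_i\cup M_j$ contains $u$ and $v$. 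For connectedness, fix $u\in V$ and let $M_i$ be its (unique) module. By construction $u\in B_s$ if and only if $M_i\in \hat B_s$, so $\{s : u\in B_s\}=\{s : M_i\in \hat B_s\}$, which is connected in $T$ by the connectedness property of $\calT$.

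The only genuinely load-bearing step is the inter-module case of the covering property: this is where the definition of a module is essential, as it guarantees that an edge of $G$ crossing between $M_i$ and $M_j$ forces the edge $\{M_i,M_j\}$ to be present in $G/\calM$, so that the blown-up decomposition inherits a covering bag. The remaining verifications (the size bound and connectedness) are routine bookkeeping, once one notes that each vertex of $G$ appears in the blown-up decomposition exactly at the nodes where its module appears in $\calT$. A small point worth stating explicitly is that intra-module edges are covered for free, simply because every module, being a single vertex of $G/\calM$, occurs in at least one bag.
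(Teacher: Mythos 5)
Your proposal is correct and follows essentially the same route as the paper: blow up each bag of a width-$\tw{G/\calM}$ decomposition of the contraction by replacing modules with their vertices, bound the bag sizes by $m(\tw{G/\calM}+1)$, and verify covering (using the module property for cross-module edges) and connectedness (using that each vertex lies in a unique module). Your write-up is in fact slightly more explicit than the paper's on the intra-module edge case and on why $\{s : u \in B_s\}$ equals $\{s : M_i \in \hat B_s\}$, but there is no substantive difference in approach.
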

\begin{proof}
    We prove it by transforming any tree decomposition $T$ of $G/\calM$ of width $k$ into a tree decomposition of $G$ of width at most $(k+1)m-1$. To do that, we construct $T'$ by replacing every occurrence of $M_i$ in $T$ by the vertices of $M_i$. If $B$ is a bag of $T$, we denote by $B'$ its corresponding bag in $T'$. Each bag $B'$ hence contains at most $(\max |M_i|)\cdot(k+1) = m(k+1)$ and the bound on the width of $T'$ follows. It remains to prove that $T'$ is indeed a tree decomposition of $G$. 
    
    We start by proving that every edge of $G$ is covered in $T'$. Let $\{u,v\}$ be an edge of $G$ and $M_u, M_v \in \calM$ be such that $u \in M_u$ and $v \in M_v$. If $M_u=M_v$ then edge $\{u,v\}$ is covered by $B'$ in $T'$ for any bag $B$ of $T$ that contains $M_u$. Otherwise, since $\{u,v\}$ is an edge of $G$, we have $u \in \oneigh[M_v][G]$. Hence $\{M_u,M_v\}$ is an edge $G/\calM$. Hence there is a bag $B$ of $T$ such that $M_u,M_v \in B$. Hence edge $\{u,v\}$ is covered in $T'$ by $B'$. 
    
    Finally,  we prove that every vertex of $G$ is connected in $T'$. Indeed, let $u$ be a vertex of $G$ such that $u \in B_1' \cap B_2'$. It means that there exists $M_1 \in B_1$ and $M_2 \in B_2$ such that $u \in M_1$ and $u \in M_2$. Now, by definition, $u$ is in exactly one module, that is, $M_1=M_2=M$. Hence, by connectedness of $T$, $M$ is in every bag on the path from $B_1$ to $B_2$ in $T$ which means that $u$ is in every bag on the path from $B_1'$ to $B_2'$.
\end{proof}

\end{document}